\numberwithin{equation}{section}
\theoremstyle{plain}
\newtheorem{Proposition}[equation]{Proposition}
\newtheorem{Corollary}[equation]{Corollary}
\newtheorem*{Corollary*}{Corollary}
\newtheorem{Theorem}[equation]{Theorem}
\newtheorem*{Theorem*}{Theorem}
\newtheorem{Lemma}[equation]{Lemma}
\theoremstyle{definition}
\newtheorem{Definition}[equation]{Definition}
\newtheorem{Example}[equation]{Example}
\newtheorem{Remark}[equation]{Remark}
\def\HH{\mathscr{H}}
\def\MM{\mathscr{M}}
\def\DD{\mathscr{D}}
\def\C{\mathbb{C}}
\def\D{\mathbb{D}}
\def\T{\mathbb{T}}
\def\N{\mathbb{N}}
\newcommand{\Span}{\operatorname{span}}
\renewcommand{\ker}{\operatorname{Ker}}
\newcommand{\Rng}{\operatorname{Rng}}
\newcommand{\Pol}{\operatorname{Pol}}
\newcommand{\beqa}{\begin{eqnarray*}}
\newcommand{\eeqa}{\end{eqnarray*}}
\newcommand{\dst}{\displaystyle}
\title[Range spaces of Toeplitz operators]{Range spaces of co-analytic Toeplitz operators}
\author[Fricain]{Emmanuel Fricain}
 \address{Laboratoire Paul Painlev\'e, Universit\'e Lille 1, 59 655 Villeneuve d'Ascq C\'edex }
 \email{emmanuel.fricain@math.univ-lille1.fr}
\author[Hartmann]{Andreas Hartmann}
\address{Institut de Math\'ematiques de Bordeaux, Universit\'e Bordeaux 1, 351 cours de la Lib\'eration 33405 Talence C\'edex, France}
\email{Andreas.Hartmann@math.u-bordeaux1.fr}
\author[Ross]{William T. Ross}
	\address{Department of Mathematics and Computer Science, University of Richmond, Richmond, VA 23173, USA}
	\email{wross@richmond.edu}
\keywords{Toeplitz operators, boundary behavior,
de Branges-Rovnyak spaces, non-extreme points, kernel functions, corona pairs}
\subjclass[2010]{30J05, 30H10, 46E22}
\begin{document}

\begin{abstract}
We discuss the range spaces of  Toeplitz operators with co-analytic symbols where we focus on the boundary behavior of the functions in these spaces as well as a natural orthogonal decomposition of this range. 
\end{abstract}

\maketitle

\section{Introduction}

In this paper we examine the range of co-analytic Toeplitz operators on the classical Hardy space $H^2$ of the  open unit disk $\D$.  In particular, we explore both the boundary behavior of functions in the range as well as a natural orthogonal decomposition of the range in a suitable Hilbert space structure. 

To explain our results, let $T_{\varphi}$ be the Toeplitz operator on $H^2$ with symbol $\varphi \in L^{\infty}$ and define its range space 
$$\MM(\varphi) :=T_{\varphi}H^2.$$
This range space is endowed with the inner product $\langle \cdot, \cdot \rangle_{\varphi}$ defined by 
$$
 \langle T_{\varphi}f,T_{\varphi}g\rangle_{\varphi} :=\langle f,g\rangle_{H^2}, \quad f,g \in H^2 \ominus \ker T_{\varphi},$$ 
 where $\langle \cdot, \cdot \rangle_{H^2}$ is the inner product in $H^2$. We remind the reader of some standard facts in the next section.
  

When $a \in H^{\infty}$, the bounded analytic functions on $\D$, and is outer, the co-analytic Toeplitz operator $T_{\overline{a}}$ is injective with dense range $\MM(\overline{a})$ in $H^2$ (Proposition \ref{Toe-facts}). In this case, the corresponding inner product $\langle \cdot, \cdot \rangle_{\overline{a}}$ on $\MM(\overline{a})$ becomes 
\begin{equation}\label{bbdggdyyd}
\langle T_{\overline{a}}f,T_{\overline{a}}g\rangle_{\overline{a}}=\langle f,g\rangle_{H^2}, \quad f,g\in H^2.
\end{equation}

Many properties of Toeplitz operators have been well investigated (see e.g.\ \cite{BS, Nik1, Nik2}). 
The less studied range spaces make important connections with the de Branges--Rovnyak spaces \cite{FM, Sa}, and the paper \cite{MR1065054} characterizes the common range of the co-analytic Toeplitz operators.  In this paper we begin a more focussed discussion of $\MM(\overline{a})$ and its various properties. 


Our first goal is to study the boundary behavior of functions in $\MM(\overline{a})$. Functions, along with their derivatives, in the so-called sub-Hardy Hilbert spaces can have more regularity at particular $\zeta_0$ on the unit circle $\T$ than generic functions in $H^2$. 
 Broadly speaking, these type of results say that if certain conditions are satisfied, then {\em every} function in a given sub-Hardy Hilbert space has a non-tangential limit at a particular $\zeta_0 \in \T$. 

As a specific example of these kind of results, suppose that $I$ is an inner function factored (canonically) as 
$I = B s_{\mu}$, 
where the first factor $B$ is a Blaschke product with zeros $\{a_n\}_{n \geqslant 1} \subset \D$ while the second factor $s_{\mu}$ is a singular inner function with corresponding positive measure $\mu$ on $\T$ with $\mu \perp d \theta$ \cite{Duren, Garnett}. One can define the well-studied model space 
\begin{equation}\label{MS}
K_I:= H^2\ominus IH^2= (I H^2)^{\perp}
\end{equation}
 \cite{Nik, Nik1, Nik2}. A theorem of Ahern and Clark \cite{AC70} says that if $\zeta_0 \in \T$ and $N \in \N_0 := \N \cup \{0\}$, then every $f \in K_I$, along with the derivatives $f', \ldots, f^{(N)}$,  has a finite non-tangential limit at $\zeta_0$ if and only if 
\begin{equation}\label{AC-cont}
\sum_{n \geqslant 1} \frac{1 - |a_n|}{|\zeta_0 - a_n|^{2 N + 2}} + \int_{\T} \frac{d \mu(\xi)}{|\zeta_0 - \xi|^{2 N + 2}}  < \infty.
\end{equation}
This work was extended by Fricain and Mashreghi \cite{MR2390675,FM08} to the closely related de Branges-Rovnyak spaces $\HH(b)$ (defined below), where $b$ is in the closed unit ball
$H^{\infty}_1$  of $H^{\infty}$, and factored (canonically) as $b = B s_{\mu} b_0$, where $B s_{\mu}$ is the inner factor of $b$ and $b_0$ its outer factor. 
Here the necessary and sufficient condition that every $f \in \HH(b)$, along with $f', \ldots, f^{(N)}$,  has a finite non-tangential limit at $\zeta_0$ becomes 
\begin{equation}\label{FM-cont}
\sum_{n \geqslant 1} \frac{1 - |a_n|}{|\zeta_0 - a_n|^{2 N + 2}} + \int_{\T} \frac{d \mu(\xi)}{|\zeta_0 - \xi|^{2 N + 2}}  +  \int_{0}^{2 \pi} \frac{|\log|b(e^{i \theta})||}{|\zeta_0 - e^{i \theta}|^{2 N + 2}} d\theta
\end{equation}
is finite. See \cite{Sa, Bolotnikov-Kheifets} for some related results.

The technique originally used by Ahern and Clark, and extended by others, to discover conditions like \eqref{AC-cont} was to control the norm of the reproducing kernels as one approached the boundary point $\zeta_0 \in \T$. We will explore this Ahern-Clark technique in a broader setting to not only capture the boundary behavior of functions in the range spaces $\MM(\overline{a})$, the primary focus of this paper, but also the de Branges-Rovnyak spaces $\HH(b)$, and even the harmonically weighted Dirichlet spaces $\mathscr{D}(\mu)$.

To describe the boundary behavior in $\MM(\overline{a})$, we first observe that we can always assume that $a$ is an outer function (Proposition~\ref{Prop:outer-wlog}). Furthermore, in Theorem \ref{thmACbis} and Corollary \ref{AC}, we will show that if $\zeta_0 \in \T$ and $N\in \N_0$, then every $f \in \MM(\overline{a})$, along with $f', f'', \ldots, f^{(N)}$, has a finite non-tangential limit at $\zeta_0$ if and only if 
\begin{equation}\label{77777}
\int_{0}^{2 \pi} \frac{|a(e^{i \theta})|^{2}}{|e^{i \theta} - \zeta_0|^{2 N + 2}} d \theta < \infty.
\end{equation}
Obviously,
the convergence of the integral in \eqref{77777} depends on the strength of the zero of $a$ at $\zeta_0$. We will use this observation to show (Proposition \ref{99999}) that there is no point $\zeta_0 \in \T$ for which every function in $\MM(\overline{a})$ has an analytic continuation to an open neighborhood of $\zeta_0$. This is in contrast to the model spaces $K_I$ where, under certain circumstances,  every function in $K_I$ has an analytic continuation across a portion of $\T$ \cite{DSS}. 
We point out that our boundary behavior results for $\MM(\overline{a})$ make connections to similar types of results for  $T_{\overline{a}} K_I$ \cite{HR12}. 

To discuss the internal Hilbert space structure of $\MM(\overline{a})$, we first observe (Proposition \ref{Prop:containment}) that $\MM(a) \subset \MM(\overline{a})$ with contractive inclusion. The space $\MM(a)$ has an obvious description as 
$$a H^2 = \{a f: f \in H^2\},$$ 
and we are interested in how $\MM(a)$ completes to $\MM(\overline{a})$ when $\MM(a)$ is
complemented in $\MM(\overline{a})$. This happens when $\MM(a)$ is closed in the topology of $\MM(\overline{a})$, which takes place when the Toeplitz operator $T_{\overline{a}/a}$ is surjective (Proposition \ref{04983}) \cite{HSS}. In this case we have an orthogonal decomposition 
$$\MM(\overline{a}) = \MM(a) \oplus_{\overline{a}} K$$
for some closed subspace $K$ of $\MM(\overline{a})$. Here $\oplus_{\overline{a}}$ denotes the orthogonal sum in the inner product $\langle \cdot, \cdot \rangle_{\overline{a}}$. 
To identify the summand $K$, we will show that 
$$\MM(\overline{a}) = \MM(a) \oplus_{\overline{a}} T_{\overline{a}} \ker T_{\overline{a}/a}$$ and then proceed to use the well developed theory of the kernels of Toeplitz operators from \cite{HSS, HS, Ha86, Ha90, Hi, MR1300218} to identify, in certain cases, $T_{\overline{a}} \ker T_{\overline{a}/a}$. Our previous results on the boundary behavior naturally come into play here.
Indeed, when \eqref{77777} is satisfied, then point evaluation kernels as well as their derivatives
up to order $N$ are elements of $K$ (see Proposition 
\ref{Prop:kernel-boundary-orthogonalcomplement}) and, in certain situations, span the
complementary space $K$ (Corollary \ref{kernelspan}).

In particular, but not all, cases, the decomposition takes the form 
$\MM(\overline{a}) = \MM(a) \oplus_{\overline{a}} K_I$, where $K_I$ is a model space corresponding to an inner function $I$ associated with $a$.

Finally, we will use our techniques to generalize  the results from \cite{FHR,LanNow} to decompose the de Branges Rovnyak spaces $\HH(b)$ for certain $b$ (Theorem \ref{Thm:Hb-decomposition-orthogonale}).

\section{Some reminders}


Let $H^2$ denote the classical Hardy space of the unit disk $\D$ \cite{Duren, Garnett} endowed with the standard $L^2$ inner product 
$$\langle f, g\rangle_{H^2} := \int_{\T} f \overline{g} dm,$$ 
where $m$ is normalized Lebesgue measure on $\T$.

Recall that $H^2$ is a reproducing kernel Hilbert space 
with reproducing (Cauchy) kernel 
\begin{equation}\label{Cauchy-kernel}
k_{\lambda}(z) := \frac{1}{1 - \overline{\lambda} z}, \quad \lambda, z \in \D,
\end{equation} 
meaning that 
$$f(\lambda) = \langle f, k_{\lambda}\rangle_{H^2}, \quad f \in H^2, \lambda \in \D.$$
Let
$P_{+}: L^2 \to H^2$ the usual (orthogonal) {\em Riesz projection} given by the formula
$$(P_{+} f)(\lambda) = \langle f, k_{\lambda}\rangle_{L^2}, \quad f \in L^2, \lambda \in \D.$$

 If $n \in \N_0$, $\lambda \in \D$, and 
\[
k_{\lambda,n}(z) :=  \frac{n! z^n}{(1-\overline{\lambda} z)^{n+1}},
\]
then $k_{\lambda, n}$ is the reproducing kernel for the $n$-th derivative at $\lambda$ in that 
\begin{equation}\label{230487534857}
f^{(n)}(\lambda)=\langle f,k_{\lambda,n}\rangle_{H^2}, \quad f \in H^2.
\end{equation}


For a symbol $\varphi \in L^{\infty}$, the space of essentially bounded Lebesgue measurable functions on $\T$, define the {\em Toeplitz operator} $T_{\varphi}$ on $H^2$ by 
$$T_{\varphi} f := P_+(\varphi f), \quad f \in H^2.$$ When $\varphi \in H^{\infty}$, $T_{\varphi}$ is called an {\em analytic Toeplitz operator} (sometimes called a Laurent operator), and is given by the simple formula $T_{\varphi} f = \varphi f$, while $T^{*}_{\varphi} = T_{\overline{\varphi}}$ is called a {\em co-analytic Toeplitz operator}.

We gather up the following useful facts about Toeplitz operators. See \cite{FM, Nik, Nik1} for more details. 

\begin{Proposition}\label{Toe-facts}
Let $\varphi, \psi \in L^{\infty}$. 
\begin{enumerate}
\item If $\varphi \in H^{\infty}$, then $T_{\overline{\varphi}} k_{\lambda} = \overline{\varphi(\lambda)} k_{\lambda}$ for every $\lambda \in \D$. 
\item If $\varphi \in H^{\infty}$ and outer, then the Toeplitz operators $T_{\varphi}, T_{\overline{\varphi}}$, and $T_{\varphi/\overline{\varphi}}$ are injective. 
\item If at least one of $\varphi, \psi$ belongs to $H^{\infty}$, then $T_{\overline{\psi}} T_{\varphi} = T_{\overline{\psi} \varphi}$.
\item If $\varphi \in H^{\infty}$ and $I$ is the inner factor of $\varphi$, then 
$$\ker{T_{\overline{\varphi}}} = K_{I}.$$ 
\item If $\varphi \in H^{\infty}$ and $I$ is inner, then $T_{\overline{\varphi}} K_I \subset K_I$. 
\end{enumerate}
\end{Proposition}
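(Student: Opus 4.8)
The plan is to verify each of the five assertions in Proposition~\ref{Toe-facts} in turn, since they are all classical and short; the only parts requiring genuine thought are (4) and (5).

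\emph{Proof plan.}

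For (1), I would simply compute: if $\varphi \in H^\infty$ and $\lambda \in \D$, then for every $f \in H^2$ we have $\langle T_{\overline\varphi} k_\lambda, f\rangle_{H^2} = \langle k_\lambda, T_\varphi f\rangle_{H^2} = \langle k_\lambda, \varphi f\rangle_{H^2} = \overline{(\varphi f)(\lambda)} = \overline{\varphi(\lambda)}\,\overline{f(\lambda)} = \langle \overline{\varphi(\lambda)} k_\lambda, f\rangle_{H^2}$, using that $T_\varphi$ acts as multiplication by $\varphi$ on $H^2$ and the reproducing property of $k_\lambda$. Since $f$ is arbitrary this gives $T_{\overline\varphi} k_\lambda = \overline{\varphi(\lambda)} k_\lambda$.

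For (2), injectivity of $T_\varphi = M_\varphi$ is immediate because an outer (indeed any nonzero) $H^\infty$ function is nonvanishing a.e.\ on $\T$, so $\varphi f = 0$ forces $f = 0$; for $T_{\overline\varphi}$ I would note $\ker T_{\overline\varphi} = (\Rng T_\varphi)^\perp$, and $\Rng T_\varphi = \varphi H^2$ is dense in $H^2$ precisely because $\varphi$ is outer (so $\varphi H^2 \supseteq \varphi \cdot \Pol$ and $\clos(\varphi H^2) = H^2$ by the definition/characterization of outer functions). Since $\varphi/\overline\varphi$ is a unimodular function whose argument, one can write $T_{\varphi/\overline\varphi} = T_{\varphi}T_{1/\overline\varphi}$ only formally, so instead I would argue directly: if $T_{\varphi/\overline\varphi} f = 0$ then $P_+(\overline\varphi^{-1}\varphi f)=0$, equivalently $\varphi f \perp \overline\varphi H^2$ in $L^2$... actually cleaner: $\varphi f/\overline\varphi \in \ov{zH^2}$, so $\varphi^2 f \in \ov{z}|\varphi|^2\ov{H^2}$; multiply by the outer-ness to reduce to $f\in \ker T_{\overline u}$ for $u$ inner trivial — I will phrase this using the standard fact that for outer $\varphi$, $\overline{\varphi}H^2$ is dense, hence $f\overline{\varphi}\in \overline{\varphi}H^2$ being orthogonal to $\varphi H^2 = $ dense set forces $f=0$. (I will use the reference \cite{FM,Nik1} to keep this clean.)

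For (3), when (say) $\psi \in H^\infty$: $T_{\overline\psi}T_\varphi f = P_+(\overline\psi\, P_+(\varphi f))$, and since $P_+(\varphi f) - \varphi f \in \ov{zH^2}$ while $\overline\psi \in \ov{H^\infty}$, the product $\overline\psi(P_+(\varphi f) - \varphi f) \in \ov{zH^2}$, which is killed by $P_+$; hence $T_{\overline\psi}T_\varphi f = P_+(\overline\psi\varphi f) = T_{\overline\psi\varphi}f$. The case $\varphi \in H^\infty$ is symmetric (or follows by taking adjoints). For (4), factoring $\varphi = I\varphi_o$ with $\varphi_o$ outer, part (3) gives $T_{\overline\varphi} = T_{\overline{\varphi_o}}T_{\overline I}$, and since $T_{\overline{\varphi_o}}$ is injective by (2), $\ker T_{\overline\varphi} = \ker T_{\overline I}$; then $\ker T_{\overline I} = \{f\in H^2 : \overline I f \in \ov{zH^2}\} = \{f : f \in I H^2\}^\perp$... more precisely $f\in \ker T_{\overline I}$ iff $If\perp$ nothing — I recall $\ker T_{\overline I} = H^2 \ominus IH^2 = K_I$ because $T_{\overline I}(Ig) = g$ and $T_{\overline I}|_{K_I}$... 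I would just cite that $(\Rng T_I)^\perp = (IH^2)^\perp = K_I$ equals $\ker T_{\overline I}$. Finally for (5), given $g \in K_I$ and $h \in H^2$, I must show $\langle T_{\overline\varphi} g, Ih\rangle = 0$: compute $\langle T_{\overline\varphi}g, Ih\rangle = \langle g, T_\varphi I h\rangle$; writing $\varphi = I\varphi_o$ is not needed — rather $T_\varphi Ih = \varphi I h$... hmm, instead use $\langle T_{\overline\varphi}g, Ih\rangle = \langle \overline\varphi g, Ih\rangle_{L^2} = \langle g, \varphi I h\rangle_{L^2} = \langle g, I(\varphi h)\rangle_{L^2}$, and since $g\in K_I = H^2\ominus IH^2$ and $\varphi h \in H^2$, this inner product vanishes. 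Hence $T_{\overline\varphi}g \perp IH^2$, i.e.\ $T_{\overline\varphi}g\in K_I$.

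The only mild obstacle is (2) and the reduction in (4): one must be careful that ``outer $\Rightarrow$ dense range'' is exactly the characterization of outer functions, and that $T_{\varphi/\overline\varphi}$ injectivity follows from density of $\overline\varphi H^2$; these are standard and I would cite \cite{FM, Nik1, Nik2} rather than reprove the outer-function theory. Everything else is a one-line projection computation.
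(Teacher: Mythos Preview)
The paper does not prove this proposition at all; it simply records it as a list of standard facts with the pointer ``See \cite{FM, Nik, Nik1} for more details.'' So your proposal goes well beyond what the paper does, and your arguments for (1), (3), (4), (5) are correct and are the standard short proofs.

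One comment on (2): your treatment of the injectivity of $T_{\varphi/\overline{\varphi}}$ wanders. The claim ``$\varphi f \perp \overline{\varphi} H^2$'' does not follow from $P_+\big((\varphi/\overline{\varphi})f\big)=0$ in the way you suggest. A clean direct argument is: if $T_{\varphi/\overline{\varphi}} f = 0$ then $(\varphi/\overline{\varphi})f \in \overline{zH^2}$, hence $\varphi f \in \overline{\varphi}\cdot\overline{zH^2} = \overline{z\varphi H^2} \subset \overline{zH^2}$; since also $\varphi f \in H^2$, we get $\varphi f \in H^2 \cap \overline{zH^2} = \{0\}$, and as $\varphi$ is outer (hence nonzero a.e.), $f=0$. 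This uses only that $\varphi \in H^{\infty}$ is nonzero, not the full outer hypothesis, though outerness is of course needed for the density argument you use for $T_{\overline{\varphi}}$.
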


The kernel $\ker{T_{\varphi}}$ of a Toeplitz operator has been well studied and will play an important role in our orthogonal decomposition. Let us recall a few results in this area. A closed linear subspace $M$ of $H^2$ is said to be {\em nearly invariant} if
$$f \in M, \; \; f(0) = 0 \implies \frac{f}{z} \in M.$$ 
We will only consider the non-trivial nearly invariant subspaces of $H^2$: $\{0\} \subsetneq M \subsetneq H^2$.

\begin{Theorem}[Hitt \cite{Hi}, Sarason \cite{MR1300218}]\label{Hitt-Thm}
Let $M$ be a non-trivial nearly invariant subspace of $H^2$.
If $\gamma$ is the unique solution to the extremal problem
$$\sup\{\Re g(0):g\in M, \|g\|_{H^2}\leqslant 1\},$$ then there is
an inner function $I$ with $I(0)=0$ such that
$$
 M=\gamma K_I.
$$
Furthermore,  $\gamma $ is an isometric multiplier from $K_I$ onto $\gamma K_I$
and can be written as
\[
 \gamma=\frac{\alpha}{1- \beta_0 I},
\]
where $\alpha, \beta_0 \in H^{\infty}$ and $|\alpha|^2 + |\beta_0|^2 = 1$ a.e.\! on $\T$.

Conversely, every space of the form $M=\gamma K_I$, with 
$$\gamma=\frac{\dst \alpha}
{\dst 1-I\beta_0},$$ $\alpha, \beta_0 \in H^{\infty}$, $|\alpha|^2+|\beta_0|^2=1$ a.e.\ on $\T$, and $I$ inner with $I(0)=0$,
is nearly invariant with associated extremal function $\gamma$.
\end{Theorem}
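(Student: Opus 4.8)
The plan is to build $\gamma$ from the stated extremal problem, to convert near invariance into a model space via a division algorithm, and then to verify the converse directly; the single hard point is an isometry statement. First I would note that $\{g\in M:\|g\|_{H^2}\leqslant 1\}$ is a closed, bounded, convex subset of $H^2$ on which $g\mapsto\Re g(0)=\Re\langle g,1\rangle_{H^2}$ is a continuous linear functional, so the supremum is attained. Since $M\neq\{0\}$ is nearly invariant, it contains a function not vanishing at $0$ (divide a nonzero element by the appropriate power of $z$), so the supremum is positive; strict convexity of the ball of $H^2$ then forces the maximiser $\gamma$ to be unique, with $\|\gamma\|_{H^2}=1$ and $\gamma(0)>0$. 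The variational condition identifies $\gamma$ as a positive multiple of the orthogonal projection of $1$ onto $M$, which gives the key identity
\begin{equation*}
\langle f,\gamma\rangle_{H^2}=\frac{f(0)}{\gamma(0)},\qquad f\in M,
\end{equation*}
so in particular $\gamma\perp\{f\in M:f(0)=0\}$.

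\textbf{Step 2: division by $\gamma$.} Fixing $f\in M$, I would run a Taylor-type division: put $b_0=f(0)/\gamma(0)$, so $g_1:=(f-b_0\gamma)/z\in M$ by near invariance; then $b_1=g_1(0)/\gamma(0)$ and $g_2:=(g_1-b_1\gamma)/z\in M$; inductively $g_n=b_n\gamma+zg_{n+1}$ with $zg_{n+1}=g_n-b_n\gamma\in M$ vanishing at $0$. By Step~1, $zg_{n+1}\perp\gamma$, hence $\|g_n\|_{H^2}^2=|b_n|^2+\|g_{n+1}\|_{H^2}^2$; telescoping gives $\|f\|_{H^2}^2=\sum_{k=0}^{n-1}|b_k|^2+\|g_n\|_{H^2}^2$, so $\varphi_f:=\sum_{k\geqslant 0}b_kz^k\in H^2$ with $\|\varphi_f\|_{H^2}\leqslant\|f\|_{H^2}$. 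Since $f(z)-\gamma(z)\sum_{k<n}b_kz^k=z^ng_n(z)\to 0$ for each $z\in\D$, we get $f=\gamma\varphi_f$ pointwise on $\D$, hence $M\subseteq\gamma H^2$. I would then set $K:=\{\varphi_f:f\in M\}$: taking $f=\gamma$ in the recursion gives $\varphi_\gamma\equiv 1$, so $1\in K$, and if $\varphi=f/\gamma\in K$ then $T_{\overline z}\varphi=g_1/\gamma\in K$ with $g_1=(f-\varphi(0)\gamma)/z\in M$, so $K$ is invariant under the backward shift. Its closure is a nonzero backward-shift invariant subspace containing $1$, hence $\overline K=K_I$ for an inner function $I$ with $I(0)=0$.

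\textbf{Step 3 (the main obstacle): the isometry.} What remains, and what I expect to be the genuine difficulty (as in Hitt and Sarason), is to promote the contraction $f\mapsto\varphi_f$ to an isometry — equivalently to show $\|g_n\|_{H^2}\to 0$ — and hence that $K$ is closed, so $K=K_I$ and $\gamma$ is an isometric multiplier of $K_I$ onto $M$. The idea would be to show the positive quadratic form $f\mapsto\|f\|_{H^2}^2-\|\varphi_f\|_{H^2}^2=\lim_n\|g_n\|_{H^2}^2$ is carried by a positive contraction on $M$ that vanishes, exploiting that $g_n=\gamma\,T_{\overline z}^{n}\varphi_f$ with $T_{\overline z}^{n}\varphi_f\to 0$ in $H^2$ while extremality of $\gamma$ pins it down as the common ``bottom'' vector of $M$ and of $\gamma K_I$. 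Once the isometry is in hand, $\int_{\T}|\gamma|^2|h|^2\,dm=\int_{\T}|h|^2\,dm$ for all $h\in K_I$; polarising and invoking the description of the closed linear span of $\{h_1\overline{h_2}:h_1,h_2\in K_I\}$ to pin down $|\gamma|^2$ on $\T$, one solves the resulting identity to get $\gamma=\alpha/(1-I\beta_0)$ with $\alpha,\beta_0\in H^{\infty}$ and $|\alpha|^2+|\beta_0|^2=1$ a.e.\ on $\T$.

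\textbf{Step 4: the converse.} For the converse I would argue directly. Given $\gamma=\alpha/(1-I\beta_0)$ with $I$ inner, $I(0)=0$, and $\gamma(0)>0$, the computation $|\gamma|^2-1=2\Re\bigl(I\beta_0/(1-I\beta_0)\bigr)$ on $\T$ together with $I\beta_0 h/(1-I\beta_0)\perp K_I$ for $h\in K_I$ (modulo routine integrability) shows that $\gamma$ is an isometric multiplier of $K_I$ into $H^2$, so $\gamma K_I$ is closed. Near invariance of $\gamma K_I$ is then immediate: if $\gamma h\in\gamma K_I$ vanishes at $0$ then $h(0)=0$ (as $\gamma(0)\neq 0$), so $h/z=T_{\overline z}h\in K_I$ and $(\gamma h)/z=\gamma(h/z)\in\gamma K_I$. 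Finally, since $\gamma$ is isometric, the extremal problem for $\gamma K_I$ reduces to $\sup\{\Re h(0):h\in K_I,\ \|h\|_{H^2}\leqslant 1\}$, which — because $I(0)=0$ gives $P_{K_I}1=1$ — is solved uniquely by $h\equiv 1$; so the associated extremal function is $\gamma\cdot 1=\gamma$, which closes the circle.
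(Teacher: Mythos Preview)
The paper does not prove this theorem: it is quoted as a known result of Hitt and Sarason, with citations, and then used as a tool. So there is no ``paper's own proof'' to compare against; your outline is essentially a reconstruction of the original Hitt--Sarason arguments (Hitt's division algorithm in Steps~1--2, Sarason's representation and converse in Steps~3--4).

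That said, your Step~3 is not a proof but an admission of where the proof is missing. The specific heuristic you offer does not close the gap: you write $g_n=\gamma\,T_{\overline z}^{\,n}\varphi_f$ and note $T_{\overline z}^{\,n}\varphi_f\to 0$ in $H^2$, but this equality is only pointwise on $\D$, and at this stage you do not know that multiplication by $\gamma$ is bounded from $H^2$ (or from $K_I$) to $H^2$ --- that is precisely what you are trying to establish. What you actually have is that $\|g_n\|_{H^2}$ is nonincreasing and $g_n\to 0$ pointwise, hence weakly in $H^2$; neither gives $\|g_n\|_{H^2}\to 0$. The genuine argument (Hitt's, or Sarason's de Branges--Rovnyak version) needs an additional idea here, and your sketch does not supply it. Everything else --- the extremal setup, the recursion and the inequality $\|\varphi_f\|_{H^2}\le\|f\|_{H^2}$, the identification of $\overline K$ as some $K_I$ with $I(0)=0$, and the converse computation $|\gamma|^2-1=2\Re\bigl(I\beta_0/(1-I\beta_0)\bigr)$ on $\T$ --- is along the right lines.
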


The parameters $\gamma$ and $\beta=I\beta_0$ are related by the following formula from \cite{MR1300218}:
\begin{equation}\label{beta}
\frac{1 + \beta(z)}{1 - \beta(z)} = \int_{\T} \frac{\zeta + z}{\zeta - z} |\gamma(\zeta)|^2 dm(\zeta), \quad z \in \D.
\end{equation}

Clearly, when $\varphi\in L^{\infty}$, then $\ker T_{\varphi}$ is nearly invariant. Hayashi identified
those nearly invariant subspaces which are kernels of Toeplitz operators. With the notation from Theorem \ref{Hitt-Thm}, set  
$$\gamma_0 :=\frac{\dst\alpha}{\dst 1-\beta_0}.$$

\begin{Theorem}[Hayashi \cite{Ha90}]
A non trivial nearly invariant subspace $M$ is the kernel of a Toeplitz operator if and only if
$\gamma_0^2$ is rigid in $H^1$.
\end{Theorem}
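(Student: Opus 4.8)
The plan is to establish the two implications separately, in each case reducing to a scalar rigidity problem for $\gamma_0^2$. Write $M=\gamma K_I$ as provided by Theorem~\ref{Hitt-Thm}, with $I(0)=0$ and $\gamma=\alpha/(1-\beta_0 I)$, so that $\gamma_0$ and $\beta_0$ are the specialisations of $\gamma$ and $\beta=I\beta_0$ at $I\equiv1$. One first checks that the statement is meaningful: from $|\alpha|^2=1-|\beta_0|^2$ a.e.\ on $\T$ one gets $|\gamma_0|^2=\frac{1-|\beta_0|^2}{|1-\beta_0|^2}=\operatorname{Re}\frac{1+\beta_0}{1-\beta_0}\in L^1(\T)$, the $I\equiv1$ counterpart of \eqref{beta}, while $1/(1-\beta_0)$ has positive real part and is therefore outer; hence $\gamma_0^2$ belongs to the Smirnov class with integrable boundary modulus, so $\gamma_0^2\in H^1$, with inner factor equal to that of $\alpha^2$. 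The conceptual idea behind the theorem is that the factor $K_I$ uses up a controlled amount of the room inside a nearly invariant subspace, and whether $M$ is forced to be a genuine Toeplitz kernel, rather than a proper subspace of one, is governed by the residual datum $\gamma_0$ and not by $\gamma$ itself.

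For the implication that rigidity of $\gamma_0^2$ forces $M$ to be a Toeplitz kernel, the plan is to exhibit an explicit symbol. Rigidity of $\gamma_0^2$ says it is an exposed point of the $H^1$ ball, so its argument $\overline{\gamma_0}/\gamma_0\in L^\infty$ is, up to a positive scalar, the unique norming functional; Hayashi's construction combines this unimodular function with $I$ (shifted by $z$, because $I(0)=0$) and the outer part of $\gamma$ into a unimodular symbol $\varphi$ with $T_\varphi\gamma=0$. The inclusion $\gamma K_I\subseteq\ker T_\varphi$ is then a direct computation using $\gamma/\gamma_0=(1-\beta_0)/(1-\beta_0 I)$ and the elementary identity $h\in K_I\iff\bar I h\in\overline{zH^2}$. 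For the reverse inclusion, let $g\in\ker T_\varphi$; since $\ker T_\varphi$ is nearly invariant and contains $\gamma$, the isometric-divisor part of Theorem~\ref{Hitt-Thm} (Hitt's division lemma) gives $g=\gamma h$ with $h\in H^2$, and it remains to prove $h\in K_I$. Unwinding $\varphi\gamma h\in\overline{zH^2}$ through the definition of $\varphi$ shows that $h\notin K_I$ would produce an $H^1$ function, not a positive multiple of $\gamma_0^2$, having the same argument on $\T$, contradicting rigidity; hence $\ker T_\varphi=\gamma K_I=M$.

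For the converse, suppose $M=\ker T_\varphi=\gamma K_I$ but $\gamma_0^2$ is not rigid, so there is $f\in H^1$, not a positive multiple of $\gamma_0^2$, with $f/|f|=\gamma_0^2/|\gamma_0^2|$ a.e.\ on $\T$; equivalently $f=\gamma_0^2 w$ with $0\leqslant w\in L^1(\T)$ non-constant. The plan is to combine $w$, via an outer square root, with the relation between $1-\beta_0$ and $1-\beta_0 I$ to build a function that belongs to $\ker T_\varphi$ but whose structural data, namely its value at the origin weighed against its norm, or the inner function one reads off after dividing by the extremal function, contradict $M=\gamma K_I$; the contradiction then comes from the uniqueness of the extremal function $\gamma$ and of the inner function $I$ in Theorem~\ref{Hitt-Thm}. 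Thus a nearly invariant subspace with non-rigid $\gamma_0^2$ cannot be a Toeplitz kernel.

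The main obstacle, and the technical heart of the theorem, is the dictionary used in the middle of both arguments: that rigidity of $\gamma_0^2$ in $H^1$ is exactly equivalent to the relevant auxiliary Toeplitz problem having only the trivial solution. This rests on Sarason's fine analysis of Toeplitz kernels with unimodular symbols and of exposed and rigid functions in $H^1$. The delicate part is the inner/outer bookkeeping needed to translate between the $I$-side, meaning the model space $K_I$ and the denominator $1-\beta_0 I$, and the $1$-side, meaning the function $\gamma_0$ and the denominator $1-\beta_0$, notably verifying that $(1-\beta_0 I)/(1-\beta_0)$ acts as an outer multiplier intertwining the two problems. By comparison, the division step and the identity $h\in K_I\iff\bar I h\in\overline{zH^2}$ are routine.
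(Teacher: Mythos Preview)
The paper does not prove this theorem; it is quoted from Hayashi \cite{Ha90} as a known result and used as a black box (together with the equation \eqref{kerker} and Theorem~\ref{7796316}). So there is no ``paper's own proof'' to compare against.

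What you have written is an outline, not a proof. As an outline it is broadly faithful to the architecture of Hayashi's argument: exhibit the symbol $\overline{I\gamma}/\gamma$ (the paper records this in \eqref{kerker}), show $\gamma K_I\subset\ker T_{\overline{I\gamma}/\gamma}$ directly, and then argue that a strictly larger kernel would produce a second $H^1$ function with the same argument as $\gamma_0^2$. Your preliminary observation that $\gamma_0^2\in H^1$ via $|\gamma_0|^2=\operatorname{Re}\tfrac{1+\beta_0}{1-\beta_0}$ is correct and is exactly the $I\equiv 1$ specialisation of Sarason's formula \eqref{beta}.

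That said, several of your steps are phrases rather than arguments. In the forward direction, ``Unwinding $\varphi\gamma h\in\overline{zH^2}$ \ldots\ would produce an $H^1$ function \ldots\ contradicting rigidity'' hides the actual computation that links the auxiliary Toeplitz equation to the argument of $\gamma_0^2$; this is precisely the point where one must pass from the $I$-side denominator $1-\beta_0 I$ to the $1$-side denominator $1-\beta_0$, and you only name this difficulty in your last paragraph without carrying it out. In the converse direction, ``combine $w$, via an outer square root, with the relation between $1-\beta_0$ and $1-\beta_0 I$ to build a function \ldots'' is a description of what one would like to do, not a construction. Finally, note that the paper observes immediately after the statement that the extremal function $\gamma$ of a Toeplitz kernel is necessarily outer, hence so is $\alpha$; your remark that $\gamma_0^2$ has inner factor equal to that of $\alpha^2$ is therefore only relevant on the non-rigid side, and you should make explicit that rigidity already forces $\alpha$ (and hence $\gamma_0$) to be outer.
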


The $H^1$ function $\gamma_{0}^{2}$ is said to be {\em rigid} if the only $H^1$ functions having the same argument as $\gamma_{0}^{2}$ almost everywhere on $\T$ are $\{c \gamma_{0}^{2}: c > 0\}$. One can show that 
if $g$ and  $1/g$ both belong to $H^1$ then $g$ is rigid. 
The converse is not always true. 

Observe that the extremal function for the kernel of a Toeplitz operator is necessarily outer (one can always divide out the inner factor). In particular, for this situation, $\alpha$ is always outer. 

If $\gamma$ is the extremal function for $\ker{T_{\varphi}}$, with associated inner function $I$, then
\begin{equation}\label{kerker}
 \ker T_{\varphi}= \gamma K_I=\ker{T_{\overline{I \gamma}/\gamma}} .
\end{equation}

Note that when $\gamma_0^2$ is rigid, then $T_{\overline{\gamma_0}/\gamma_0}$ is 
injective  \cite[Theorem X-2]{Sa}. In this paper we will also need the  stronger property, namely the invertibility
of $T_{\overline{\gamma_0}/\gamma_0}$. This is characterized in \cite{HSS} by the well-known $(A_2)$-condition.

\begin{Theorem}\label{7796316}
With the notation above, suppose that $\ker{T_{\varphi}} \not = \{0\}$. Then the Toeplitz operator $T_{\varphi}$ is surjective if and only if 
$|\gamma_{0}|^2$ is an $(A_2)$ weight, meaning 
\begin{equation}\label{A2}
\sup_{J}\left( \frac{1}{J} \int_{J} |\gamma_0|^2 dm\right) \left(\frac{1}{J} \int_{J} |\gamma_{0}|^{-2} dm\right) < \infty,
\end{equation}
where the supremum above is taken over all arcs $J \subset \T$.
\end{Theorem}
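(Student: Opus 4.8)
The plan is to reduce the surjectivity of $T_\varphi$ to an invertibility statement about the Toeplitz operator $T_{\overline{\gamma_0}/\gamma_0}$ and then invoke the known characterization of invertibility of such unimodular-symbol Toeplitz operators via Helson--Szeg\H{o} / Hunt--Muckenhoupt--Wheeden. First I would recall from \eqref{kerker} that $\ker T_\varphi = \gamma K_I = \ker T_{\overline{I\gamma}/\gamma}$, so without loss of generality we may replace $\varphi$ by the unimodular symbol $\overline{I\gamma}/\gamma$; surjectivity of a Toeplitz operator depends only on its symbol up to such a reduction, since $T_\varphi$ and $T_{\overline{I\gamma}/\gamma}$ have the same kernel and (being essentially the same operator up to the outer/unit factors that the extremal problem strips off) the same range. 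The key structural input is Hitt's theorem (Theorem \ref{Hitt-Thm}) together with the fact that $\gamma$ is an \emph{isometric} multiplier from $K_I$ onto $\gamma K_I$, which lets me transfer norm estimates between $K_I$ and $\ker T_\varphi$ cleanly.

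Next I would set up the concrete criterion. By the open mapping theorem, $T_\varphi$ is surjective if and only if $T_\varphi$ is bounded below on $H^2 \ominus \ker T_\varphi = (\gamma K_I)^\perp$, equivalently $\|T_\varphi^* h\| \gtrsim \|h\|$ for all $h$, i.e.\ $T_{\bar\varphi}$ is bounded below. Using the factorization of $\gamma$ in terms of $\gamma_0 = \alpha/(1-\beta_0)$ and the identity \eqref{beta} relating $\gamma$, $\beta = I\beta_0$, and $|\gamma|^2$, I would express the relevant semi-inner-product in terms of the weight $|\gamma_0|^2$. The crucial observation, which is precisely where \cite{HSS} enters, is that the surjectivity of $T_\varphi$ is equivalent to the \emph{invertibility} (not merely injectivity) of the unimodular Toeplitz operator $T_{\overline{\gamma_0}/\gamma_0}$; rigidity of $\gamma_0^2$ already gives injectivity of this operator by \cite[Theorem X-2]{Sa}, and one needs to upgrade to invertibility. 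Then the classical theorem on Toeplitz operators with symbol $\bar u/u$ for outer $u$ — that $T_{\bar u/u}$ is invertible iff $|u|^2 \in (A_2)$ — supplies the stated condition \eqref{A2}.

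The technical heart, and the step I expect to be the main obstacle, is the passage between "range of $T_\varphi$ is all of $H^2$" and "$|\gamma_0|^2 \in (A_2)$"; this is not a one-line consequence of the definitions because $\varphi$ is a general $L^\infty$ symbol, not itself of the form $\bar u/u$, and the model-space factor $I$ (with $I(0)=0$) has to be carried through the argument. I would handle this by first treating the case where $\ker T_\varphi = \gamma_0 K_I$ directly and then accounting for the difference between $\gamma$ and $\gamma_0$, which are related through the boundary identity $|\gamma|^2 = \Re\frac{1+\beta}{1-\beta}$ coming from \eqref{beta} and the analogous formula for $\gamma_0$ with $\beta$ replaced by $\beta_0$. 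One checks that the extremal function $\gamma$ is outer (as remarked in the excerpt, one can always divide out the inner factor, and for kernels of Toeplitz operators $\alpha$ is outer), so that $|\gamma_0|^2$ and $|\gamma|^2$ are comparable on $\T$ up to the harmless factor $|1-\beta|^2/|1-\beta_0|^2$; the $(A_2)$ condition, being a scale-invariant statement about the weight, is insensitive to the exact representative. With that comparison in hand the equivalence follows from the scalar $(A_2)$ characterization of invertibility of $T_{\bar\gamma_0/\gamma_0}$ quoted from \cite{HSS}, completing the proof.
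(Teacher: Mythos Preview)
The paper does not supply a proof of this theorem; it is quoted from \cite{HSS}. So there is nothing to compare your argument against in the paper itself. That said, your proposal as written does not constitute a proof: the right outline is present, but the two places where the real work lies are precisely where you hand-wave.

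First, the reduction step. You assert that $T_\varphi$ and $T_{\overline{I\gamma}/\gamma}$ ``have the same range'' because they have the same kernel. Equality of kernels for two Toeplitz operators does not by itself imply equality (or even simultaneous closedness) of ranges; the phrase ``essentially the same operator up to outer/unit factors that the extremal problem strips off'' is not an argument. In \cite{HSS} this reduction is carried out carefully and is itself nontrivial. Second, and more seriously, you state that ``the surjectivity of $T_\varphi$ is equivalent to the invertibility of $T_{\overline{\gamma_0}/\gamma_0}$; this is precisely where \cite{HSS} enters.'' But that equivalence \emph{is} the theorem you are trying to prove: once one knows surjectivity of $T_\varphi$ is equivalent to invertibility of $T_{\overline{\gamma_0}/\gamma_0}$, the $(A_2)$ characterization is a classical consequence (Helson--Szeg\H{o}, Hunt--Muckenhoupt--Wheeden). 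By invoking \cite{HSS} at that point you are citing the result you are asked to establish.

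Finally, your attempt to pass between $\gamma$ and $\gamma_0$ by declaring $|1-\beta|^2/|1-\beta_0|^2$ a ``harmless factor'' is not correct. This ratio is generally neither bounded nor bounded away from zero on $\T$, and the $(A_2)$ condition is certainly not preserved under multiplication by such factors; indeed the whole point of the theorem is that $|\gamma_0|^2 \in (A_2)$ is the right condition, not $|\gamma|^2 \in (A_2)$. The passage from $\gamma$ to $\gamma_0$ is accomplished in \cite{HSS} through a different mechanism (roughly, by analyzing how the inner factor $I$ interacts with the range of $T_{\overline{I\gamma}/\gamma}$ and reducing to a statement about $T_{\overline{\gamma_0}/\gamma_0}$ on $H^2$, not by a pointwise comparison of weights).
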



\section{Range spaces}\label{Sec:the-range-space}

For a bounded linear operator $A: H^2 \to H^2$, define the {\em range space} 
$$\MM(A) := A H^2$$  and endow it with the {\em range norm}
\begin{equation}\label{Rng-norm}
\|A f\|_{\MM(A)} :=\|f\|_{H^2}, \quad f \in H^2 \ominus \ker{A}.
\end{equation} The induced inner product 
$$\langle A f, A g\rangle_{\MM(A)} := \langle f, g \rangle_{\MM(A)}, \quad f, g \in H^2 \ominus \ker{A}$$
makes $\MM(A)$ a Hilbert space and 
makes $A$ a partial isometry with initial space $H^2 \ominus \ker{A}$ and final space $A H^2$. 
In fact, using the identity $(\ker{A})^{\perp} = (\mbox{Rng} \,A^{*})^{-}$, we see that
\begin{eqnarray}\label{AAstar}
 \langle f,AA^*g\rangle_{\MM(A)}=\langle f,g\rangle_{H^2}, \quad f \in \MM(A), \, g \in H^2.
\end{eqnarray}
These range spaces $\MM(A)$, as well as their complementary spaces, were formally introduced by Sarason \cite{Sa} though they appeared earlier in the context of square summable power series in the work of de Branges and Rovnyak \cite{MR0244795, MR0215065}. We will discuss this connection in a moment.

Since $\MM(A)$ is boundedly contained in $H^2$, meaning that the inclusion operator is bounded, we see that for fixed $n  \in \N_0$ and $\lambda \in \D$,  the linear functional $f\mapsto f^{(n)}(\lambda)$ is continuous on $\MM(A)$. By the Riesz representation theorem, this functional is given by a reproducing kernel $k_{\lambda,n}^{\MM(A)}\in\MM(A)$, that is to say,
\[
f^{(n)}(\lambda)=\langle f,k_{\lambda,n}^{\MM(A)}\rangle_{\MM(A)},\qquad f\in\MM(A).
\]
\begin{Proposition}\label{RK-formula1}
For fixed $\lambda \in \D$ and $n\in \N_0$, we have 
$$k_{\lambda,n}^{\MM(A)}=AA^*k_{\lambda,n}.
$$
\end{Proposition}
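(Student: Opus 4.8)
The plan is to compute, for $f\in\MM(A)$, the pairing $\langle f, AA^{*}k_{\lambda,n}\rangle_{\MM(A)}$ and show it equals $f^{(n)}(\lambda)$, which by the uniqueness of reproducing kernels identifies $AA^{*}k_{\lambda,n}$ with $k_{\lambda,n}^{\MM(A)}$. First I would note that $AA^{*}k_{\lambda,n}\in\MM(A)=AH^{2}$ trivially, so it is a legitimate candidate element of the space. The key identity is \eqref{AAstar}, which was already recorded in the excerpt: for every $f\in\MM(A)$ and every $g\in H^{2}$ one has $\langle f, AA^{*}g\rangle_{\MM(A)}=\langle f,g\rangle_{H^{2}}$.

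Applying \eqref{AAstar} with $g=k_{\lambda,n}$ gives
$$\langle f, AA^{*}k_{\lambda,n}\rangle_{\MM(A)}=\langle f,k_{\lambda,n}\rangle_{H^{2}}=f^{(n)}(\lambda),$$
where the last equality is the reproducing property \eqref{230487534857} of $k_{\lambda,n}$ in $H^{2}$, valid since $f\in\MM(A)\subset H^{2}$. Since this holds for all $f\in\MM(A)$, and since the reproducing kernel for the functional $f\mapsto f^{(n)}(\lambda)$ on $\MM(A)$ is unique, we conclude $k_{\lambda,n}^{\MM(A)}=AA^{*}k_{\lambda,n}$.

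I expect essentially no obstacle here: the statement is an immediate consequence of identity \eqref{AAstar} together with the $H^{2}$ reproducing property, so the "proof" is really just the two-line chain of equalities above. The only point that warrants a word of care is why \eqref{AAstar} is available for an arbitrary $g\in H^{2}$ rather than merely $g\in H^{2}\ominus\ker A$; this is because $AA^{*}g$ depends only on the component of $g$ in $(\ker A)^{\perp}=\overline{\Rng A^{*}}$, and on that component $A$ is a genuine isometry onto $\MM(A)$, so the range-norm pairing transfers directly to the $H^{2}$ pairing. One could also derive it from scratch by writing $f=Ah$ with $h\in H^{2}\ominus\ker A$ and computing $\langle Ah, AA^{*}g\rangle_{\MM(A)}=\langle h, A^{*}g\rangle_{H^{2}}=\langle Ah, g\rangle_{H^{2}}=\langle f,g\rangle_{H^{2}}$, but since \eqref{AAstar} is stated in the excerpt I would simply invoke it.
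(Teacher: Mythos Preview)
Your proof is correct and follows essentially the same approach as the paper: apply \eqref{AAstar} with $g=k_{\lambda,n}$ and then the $H^2$ reproducing property \eqref{230487534857} to obtain $\langle f, AA^{*}k_{\lambda,n}\rangle_{\MM(A)}=f^{(n)}(\lambda)$ for all $f\in\MM(A)$. Your additional remarks on why \eqref{AAstar} holds for arbitrary $g\in H^{2}$ are accurate but not needed, since the paper already records \eqref{AAstar} in that generality.
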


\begin{proof}
For any $f \in \MM(A)$, use \eqref{AAstar} along with \eqref{230487534857} to get
$$
\langle f,AA^*k_{\lambda,n}\rangle_{\MM(A)} = \langle f,k_{\lambda,n}\rangle_{H^2} =f^{(n)}(\lambda). \qedhere
$$
\end{proof}

When $A$ is a co-analytic Toeplitz operator $T_{\overline{a}}$ ($a\in H^\infty$), we obtain a special form for the reproducing kernel. 

\begin{Corollary}\label{RK-formula}
For each $\lambda \in \D$ and $n \in \N_0$ we have 
$$k_{\lambda,n}^{\MM(T_{\overline{a}})}=T_{\overline{a}}a
k_{\lambda,n}=T_{|a|^2}k_{\lambda,n}.$$
\end{Corollary}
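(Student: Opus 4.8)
The plan is to obtain this statement as a direct specialization of Proposition~\ref{RK-formula1} to the operator $A = T_{\overline a}$. The first step is to identify the adjoint: since $a \in H^\infty$, the operator $T_{\overline a}$ is co-analytic with $T_{\overline a}^* = T_a$, and $T_a$ acts on $H^2$ by multiplication, so $T_a k_{\lambda,n} = a\, k_{\lambda,n}$. Note that $a\, k_{\lambda,n} \in H^2$ because $a \in H^\infty$ and $k_{\lambda,n} \in H^2$, so the composition $T_{\overline a} T_a k_{\lambda,n}$ makes sense. Substituting $A = T_{\overline a}$, $A^* = T_a$ into Proposition~\ref{RK-formula1} gives immediately
\[
k_{\lambda,n}^{\MM(T_{\overline a})} = T_{\overline a} T_a k_{\lambda,n} = T_{\overline a}\bigl(a\, k_{\lambda,n}\bigr),
\]
which is the first asserted equality.

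For the second equality I would apply the composition rule for Toeplitz operators from Proposition~\ref{Toe-facts}(3): since $a \in H^\infty$, we have $T_{\overline a} T_a = T_{\overline a a}$, and because $\overline a a = |a|^2$ a.e.\ on $\T$ (with $|a|^2 \in L^\infty$, so that $T_{|a|^2}$ is a well-defined bounded operator on $H^2$), this reads $T_{\overline a} T_a = T_{|a|^2}$. Evaluating at $k_{\lambda,n}$ yields $T_{\overline a}\bigl(a\, k_{\lambda,n}\bigr) = T_{|a|^2} k_{\lambda,n}$, which combined with the previous display completes the proof.

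There is essentially no obstacle here; the argument is pure bookkeeping built on the two facts just cited. The only details worth a moment's attention are the membership checks — that $a\, k_{\lambda,n} \in H^2$ so that $T_{\overline a}$ may legitimately be applied to it, and that $|a|^2 \in L^\infty$ so that $T_{|a|^2}$ is defined — both immediate from $a \in H^\infty$. If one preferred a self-contained verification avoiding Proposition~\ref{RK-formula1}, one could instead check directly from \eqref{AAstar} that $T_{|a|^2} k_{\lambda,n}$ reproduces the $n$-th derivative at $\lambda$ on $\MM(T_{\overline a})$, but reusing the general formula is cleaner.
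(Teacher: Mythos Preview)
Your proof is correct and follows exactly the same route as the paper: apply Proposition~\ref{RK-formula1} with $A=T_{\overline a}$, use $T_{\overline a}^{*}=T_a$, and then the Toeplitz composition rule from Proposition~\ref{Toe-facts}(3) to collapse $T_{\overline a}T_a$ into $T_{|a|^2}$. (The paper's printed reference to Proposition~\ref{Toe-facts}(2) appears to be a typo for (3); your citation is the right one.)
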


\begin{proof}
Observe that $T_{\overline{a}}^*=T_a$ and apply Proposition \ref{RK-formula1} and Proposition \ref{Toe-facts}(2).
\end{proof}

\begin{Remark}\label{rwerwer}
Since the range space of a co-analytic Toeplitz operator is the primary focus on this paper, we will use the less cumbersome notation 
$$\MM(a) := \MM(T_{a}), \qquad \MM(\overline{a}) := \MM(T_{\overline{a}}),$$ 
$$\langle \cdot, \cdot \rangle_{\overline{a}} := \langle \cdot, \cdot \rangle_{\MM(T_{\overline{a}})},$$
$$k_{\lambda, n}^{\overline{a}} := k_{\lambda,n}^{\MM(T_{\overline{a}})}, \qquad k_{\lambda}^{\overline{a}} := k_{\lambda, 0}^{\overline{a}}.$$

\end{Remark}

Let us mention a few more structural details concerning $\MM(\overline{a})$. For any $a \in H^{\infty}$, let $a_0$ be the outer factor of $a$. 


\begin{Proposition}{\cite[Corollary 16.8]{FM}}\label{Prop:outer-wlog}
$\MM(\overline a)=\MM(\overline{a_0})$ as Hilbert spaces.
\end{Proposition}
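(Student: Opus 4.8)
The plan is to show directly that $T_{\overline a}$ and $T_{\overline{a_0}}$ have the same range in $H^2$ and that the two range norms coincide, which by the definition \eqref{Rng-norm} is all that is required. Write the inner–outer factorization $a = I a_0$, where $I$ is the inner factor of $a$ and $a_0$ is outer. By Proposition~\ref{Toe-facts}(3) (with $\overline\psi = \overline{a_0}$ and $\varphi = I \in H^\infty$, and noting $T_{\overline a} = T_{\overline I \, \overline{a_0}} = T_{\overline{a_0}} T_I$), we get the operator identity
\begin{equation*}
T_{\overline a} = T_{\overline{a_0}} T_I .
\end{equation*}
Since $T_I$ is an isometry on $H^2$ (indeed $T_I f = I f$ and $|I| = 1$ a.e.\ on $\T$), it follows immediately that $\Rng T_{\overline a} = T_{\overline{a_0}}(T_I H^2) \subseteq T_{\overline{a_0}} H^2 = \Rng T_{\overline{a_0}}$.

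For the reverse inclusion, the key point is that $T_I H^2 = I H^2$ is exactly $H^2 \ominus K_I$, and by Proposition~\ref{Toe-facts}(4), $K_I = \ker T_{\overline{a_0}}$ is impossible in general — rather $\ker T_{\overline{a}} = K_I$ while $a_0$ outer gives $\ker T_{\overline{a_0}} = \{0\}$ by Proposition~\ref{Toe-facts}(2). So instead I argue as follows: given $g \in H^2$, decompose $g = I g_1 + g_2$ with $g_1 \in H^2$ and $g_2 \in K_I$. Then, using Proposition~\ref{Toe-facts}(5) and the fact that $K_I = \ker T_{\overline I}$, we have $T_{\overline I} g = T_{\overline I}(I g_1) = g_1$, so that $T_{\overline{a_0}} g = T_{\overline{a_0}} T_I g_1' $ for a suitable $g_1'$ only if $g_2$ contributes nothing. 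This is the slightly delicate part: I need $T_{\overline{a_0}} g_2 \in \Rng T_{\overline a}$ for $g_2 \in K_I$. One clean way around this is to observe that since $a_0$ is outer, $T_{\overline{a_0}}$ has dense range, and then instead of chasing ranges elementwise, compare the two range spaces as \emph{Hilbert spaces} via their reproducing kernels: by Corollary~\ref{RK-formula}, $k_\lambda^{\MM(T_{\overline a})} = T_{|a|^2} k_\lambda = T_{|a_0|^2} k_\lambda$ since $|a| = |a_0|$ a.e.\ on $\T$, and likewise $k_\lambda^{\MM(T_{\overline{a_0}})} = T_{|a_0|^2} k_\lambda$. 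Thus the two range spaces have identical reproducing kernels, hence (being reproducing kernel Hilbert spaces with the same kernel over the same domain $\D$) they coincide as Hilbert spaces.

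I regard this reproducing-kernel argument as the cleaner route, and I expect the \emph{main obstacle} to be purely cosmetic: one must make sure that $k_\lambda^{\MM(T_{\overline a})}$ (and its derivative analogues $k_{\lambda,n}^{\MM(T_{\overline a})}$) genuinely determine the space, i.e.\ that their closed linear span is dense in $\MM(\overline a)$. This is automatic because $\MM(\overline a)$ is boundedly contained in $H^2$, so point evaluations (and derivative evaluations) are bounded and separate points, forcing the span of $\{k_{\lambda,n}^{\MM(T_{\overline a})}\}$ to be dense; a function orthogonal to all of them vanishes identically. Combining this with the kernel identity $k_{\lambda,n}^{\MM(T_{\overline a})} = T_{|a_0|^2}k_{\lambda,n} = k_{\lambda,n}^{\MM(T_{\overline{a_0}})}$ and the Moore–Aronszajn uniqueness of a Hilbert space from its reproducing kernel yields $\MM(\overline a) = \MM(\overline{a_0})$ with equality of norms, which is the assertion.

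Alternatively, one can finish the elementary range-chasing argument by noting that $T_{\overline{a_0}} g_2$ for $g_2 \in K_I$ already lies in $\Rng T_{\overline{a_0}}$ trivially, but we want it in $\Rng T_{\overline a} = T_{\overline{a_0}} I H^2$; since $a_0$ outer means $T_{\overline{a_0}}$ injective, $\Rng T_{\overline a}$ is a proper dense subspace of $\Rng T_{\overline{a_0}}$ \emph{as subsets of $H^2$}, yet they carry the same norm and the same completion — so it is precisely at the level of the completed Hilbert spaces $\MM(\cdot)$ that equality holds, and the reproducing-kernel formulation is the honest way to state this. I would therefore present the reproducing-kernel proof as the main line and relegate the set-theoretic subtlety to a remark.
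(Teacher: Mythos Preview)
The paper does not prove this proposition; it is quoted from \cite{FM}. So there is no ``paper's proof'' to compare against, only the correctness of your argument to assess.

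Your reproducing-kernel route is valid: since $|a|=|Ia_0|=|a_0|$ a.e.\ on $\T$, Corollary~\ref{RK-formula} gives $k_\lambda^{\overline a}=T_{|a|^2}k_\lambda=T_{|a_0|^2}k_\lambda=k_\lambda^{\overline{a_0}}$, and two reproducing kernel Hilbert spaces on $\D$ with the same kernel coincide. That is a complete proof.

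However, the surrounding discussion contains a genuine error that leads you to a false conclusion. You write $T_{\overline a}=T_{\overline{a_0}}T_I$, but Proposition~\ref{Toe-facts}(3) gives $T_{\overline{a_0}}T_I=T_{\overline{a_0}\,I}$, which is not $T_{\overline a}$. The correct factorization is
\[
T_{\overline a}=T_{\overline{a_0}\,\overline I}=T_{\overline{a_0}}T_{\overline I}
\]
(take $\psi=a_0\in H^\infty$, $\varphi=\overline I$ in Proposition~\ref{Toe-facts}(3)). Now $T_{\overline I}$ is \emph{surjective} on $H^2$, since $T_{\overline I}(If)=f$ for every $f\in H^2$. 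Hence
\[
\Rng T_{\overline a}=T_{\overline{a_0}}(T_{\overline I}H^2)=T_{\overline{a_0}}H^2=\Rng T_{\overline{a_0}},
\]
and the two ranges are equal \emph{as sets}. For the norms, note $\ker T_{\overline a}=K_I$ (Proposition~\ref{Toe-facts}(4)), so $H^2\ominus\ker T_{\overline a}=IH^2$; for $Ig\in IH^2$ one has $T_{\overline a}(Ig)=T_{\overline{a_0}}g$ and $\|T_{\overline a}(Ig)\|_{\overline a}=\|Ig\|_{H^2}=\|g\|_{H^2}=\|T_{\overline{a_0}}g\|_{\overline{a_0}}$. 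This is the direct proof you were reaching for.

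Consequently your final paragraph is mistaken: $\Rng T_{\overline a}$ is \emph{not} a proper subspace of $\Rng T_{\overline{a_0}}$, and there is no ``completion'' subtlety to explain away. The space $\MM(\overline a)$ is by definition exactly the set $T_{\overline a}H^2$ with the range norm, nothing more; had the ranges differed as sets, the proposition would simply be false. Delete that paragraph and either present the corrected direct argument above or the reproducing-kernel argument on its own.
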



\begin{Remark}\label{R-a-outer}
Thus, when discussing $\MM(\overline{a})$ spaces, we can always assume that $a=a_0$ is outer. 
\end{Remark}



\begin{Proposition}{\cite{FM, Sa}}\label{Prop:containment}
For $a \in H^{\infty}$  we have $\MM(a) \subset \MM(\overline{a})$ and the inclusion is contractive.
\end{Proposition}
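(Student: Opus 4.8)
The plan is to exhibit $\MM(a)$ as a subset of $\MM(\overline a)$ by comparing the two range norms on a common dense subspace, and then invoke completeness. Since $a$ is outer (Remark \ref{R-a-outer}, via Proposition \ref{Prop:outer-wlog}), the operator $T_{\overline a}$ is injective with dense range (Proposition \ref{Toe-facts}(2)), so the range norm on $\MM(\overline a)$ is given by \eqref{bbdggdyyd}: $\|T_{\overline a}f\|_{\overline a}=\|f\|_{H^2}$ for all $f\in H^2$. Likewise $T_a$ is injective, $\MM(a)=aH^2$, and $\|af\|_{\MM(a)}=\|f\|_{H^2}$ for all $f\in H^2$.

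First I would take an arbitrary element of $\MM(a)$, namely $af$ with $f\in H^2$, and observe that $af = T_a f = T_{\overline a}T_{a/\overline a} f$ by Proposition \ref{Toe-facts}(3) (since $a\in H^\infty$), because $T_{\overline a}T_{a/\overline a}=T_{\overline a\cdot a/\overline a}=T_a$; here $a/\overline a$ is unimodular so $T_{a/\overline a}$ is a well-defined contraction on $H^2$. Hence $af\in T_{\overline a}H^2=\MM(\overline a)$, which gives the inclusion $\MM(a)\subset\MM(\overline a)$ at the level of sets. For the norm comparison, write $g := T_{a/\overline a}f$, so that $af = T_{\overline a}g$, and therefore
\[
\|af\|_{\overline a} = \|T_{\overline a}g\|_{\overline a} = \|g\|_{H^2} = \|T_{a/\overline a}f\|_{H^2} \leqslant \|f\|_{H^2} = \|af\|_{\MM(a)},
\]
where the inequality is just the contractivity of the Toeplitz operator $T_{a/\overline a}$ with unimodular symbol. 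This is exactly the contractive inclusion claimed.

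The one point needing a little care is that the estimate above was derived for a representative $af$ with $f\in H^2$; since $T_a$ is injective, this representative is unique, and $\|af\|_{\MM(a)}=\|f\|_{H^2}$ unambiguously, so no issue arises there. There is genuinely no hard part here — the whole argument is the single identity $T_{\overline a}T_{a/\overline a}=T_a$ combined with $\|T_{a/\overline a}\|\leqslant 1$; the only thing to be mindful of is keeping straight which range norm is being used on which space, and the fact that Remark \ref{R-a-outer} lets us assume $a$ outer so that both $T_a$ and $T_{\overline a}$ are injective and the range norms are the clean ones in \eqref{bbdggdyyd}. (Alternatively, one could avoid reducing to the outer case and argue directly with the general range-norm definition \eqref{Rng-norm}, factoring through $H^2\ominus\ker T_a$ and $H^2\ominus\ker T_{\overline a}$, but the outer reduction makes the bookkeeping trivial.)
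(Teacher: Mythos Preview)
Your argument is correct and is exactly the approach the paper indicates: the single identity $T_a=T_{\overline a}T_{a/\overline a}$ together with $\|T_{a/\overline a}\|\leqslant 1$. One small remark on the reduction to outer $a$: Remark~\ref{R-a-outer} and Proposition~\ref{Prop:outer-wlog} only concern $\MM(\overline a)$, not $\MM(a)$, so strictly speaking you should also note that if $a=Ia_0$ then $\MM(a)\subset\MM(a_0)$ isometrically (since $Ia_0f=a_0(If)$ and $\|If\|_{H^2}=\|f\|_{H^2}$), which then lets the reduction go through; alternatively, as you mention, the argument works for general $a$ directly because $\|T_{\overline a}g\|_{\overline a}\leqslant\|g\|_{H^2}$ for every $g\in H^2$ by the definition of the range norm.
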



The previous proposition can be seen from from the simple identity  $T_{a} = T_{\overline{a}} T_{a/\overline{a}}$ which
we will use later.

To connect the results of this paper with those of 
\cite{FHR, LanNow}, let us briefly recall some facts about the de Branges-Rovnyak spaces \cite{FM, Sa}.
For $b \in H^{\infty}_{1} = \{f \in H^{\infty}: \|f\|_{\infty} \leqslant 1\}$, the closed unit ball
in $H^{\infty}$, define 
$$A:=(I-T_bT_{\overline{b}})^{1/2}.$$ 
The {\em de Branges-Rovnyak space} $\HH(b)$ is defined to be
\begin{equation}\label{DRSp}
\HH(b) := \MM(A),
\end{equation}
endowed with the range norm from \eqref{Rng-norm}.

\begin{Remark}
In a similar vein to Remark \ref{rwerwer}, we set
$$\langle \cdot, \cdot \rangle_{b} := \langle \cdot, \cdot \rangle_{\MM(A)}, \qquad k_{\lambda,n}^{b} := k_{\lambda,n}^{\MM(A)},\qquad k_\lambda^b:=k_{\lambda,0}^b,$$
when $A=(I-T_bT_{\overline{b}})^{1/2}$ and $n\in\N_0$.
\end{Remark}

When $\|b\|_{\infty} < 1$ it turns out that  $\HH(b) = H^2$ with an equivalent norm. When $b = I$ is an inner function, then 
$\HH(I) = K_I$ is one of the model spaces from \eqref{MS} endowed with the $H^2$ norm. 

Suppose $a \in H^{\infty}_{1}$ is outer and satisfies $\log (1 - |a|) \in L^1=L^1(\T, m)$. 
This log integrability condition is equivalent to the fact that $a$ is a {\em non-extreme} point of $H^{\infty}_{1}$. Let $b$ be the outer function, unique if we require the additional condition that $b(0) > 0$, which satisfies 
$$|a|^2 + |b|^2 = 1 \quad \mbox{a.e.\! on $\T$}.$$ We call $b$, necessarily in $H^{\infty}_{1}$, the {\em Pythagorean mate} for $a$. If $\HH(b)$ is the associated de Branges-Rovnyak space from \eqref{DRSp}, 
it is known \cite[p.~24]{Sa} that 
$$\MM(a) \subset \MM(\overline{a}) \subset \HH(b),$$ though neither $\MM(a)$ nor $\MM(\overline{a})$ is necessarily closed in $\HH(b)$. Still, $\MM(\overline{a})$ is always dense in $\HH(b)$. Furthermore,  when $(a,b)$ is a {\em corona pair}, that is to say, 
\begin{equation}\label{CP}
\inf\{|a(z)| + |b(z)|: z \in \D\} > 0,
\end{equation}
 then $\HH(b)=\MM(\overline{a})$ \cite[Theorem 28.7]{FM} or \cite{Sa}. The equality $\MM(\overline{a}) = \HH(b)$ is a set equality but the norms, though equivalent by the closed graph theorem, need not be equal.



\section{Boundary behavior in sub-Hardy Hilbert spaces}\label{Section2}


While the focus of this paper is the boundary behavior of functions in $\MM(\overline{a})$, or more generally the range spaces $\MM(A)$, our discussion of boundary behavior can be broadened to a class of ``admissible'' reproducing kernel Hilbert spaces of analytic functions on $\D$. 

To get started, let $\HH$ be a Hilbert space of analytic functions on $\D$ with norm $\|\cdot\|_{\HH}$ such that for each $\lambda \in \D$, the evaluation functional $f \mapsto f(\lambda)$ is continuous on $\HH$. By the Riesz representation theorem, there is a $k_{\lambda}^{\HH} \in \HH$ such that 
$$f(\lambda) = \langle f, k_{\lambda}^{\HH}\rangle_{\HH}.$$ This function $k_{\lambda}^{\HH}(z)$, called the {\em reproducing kernel} for $\HH$, is an analytic function of $z$ and a co-analytic function of $\lambda$. The space $\HH$ with such a kernel function is called a {\em reproducing kernel Hilbert space} \cite{Paulsen}. 

For each $n \in \N_0$ it follows that the linear functional $f \mapsto f^{(n)}(\lambda)$ is also continuous on $\HH$ and thus given by a reproducing kernel $k_{\lambda,j}^{\HH} \in \HH$: $$f^{(j)}(\lambda) = \langle f, k_{\lambda,j}^{\HH}\rangle_{\HH}, \quad f \in \HH, \lambda \in \D.$$ A brief argument from \cite[p. 911]{FM} will show that 
\begin{equation}\label{eq:derivative-kernel}
k_{\lambda,j}^{\HH}=\frac{\partial^j}{\partial\overline\lambda^j}k_\lambda^{\HH}.
\end{equation}
When $j=0$ we set $k_{\lambda}^{\HH}:=k_{\lambda,0}^{\HH}$.

Define the following linear transformations $T$ and $B$ on $\mathscr{O}(\D)$ (the  vector space of analytic functions on $\D)$ by 
$$(T f)(z) = z f(z), \quad (B f)(z) = \frac{f(z) - f(0)}{z}.$$
Observe that $S := T|_{H^2}$ is the well-known unilateral shift operator on $H^2$ and $S^{*} = B|_{H^2}$ is the equally well-known backward shift. Observe further that, in terms of Toeplitz operators on $H^2$, we have $S=T_z$ and $S^*=T_{\overline z}$.

\begin{Definition}\label{Def-admissible}
A reproducing kernel Hilbert space 
$\mathscr{H}$ of analytic functions on $\D$ satisfying the two conditions
\begin{enumerate}
\item $B\HH\subset \HH$ and $\|B\|_{\HH \to \HH}\leqslant 1$,
\item $\sigma_p(X_{\HH}^*)\subset \D$, where $X_{\HH} := B|_{\HH}$,
\end{enumerate}
will be called {\em admissible}. In the above, $\sigma_{p}(X_{\HH}^{*})$ is the point spectrum 
of the operator $X_{\HH}^{*}$. 
\end{Definition}

We will discuss some examples, such as $\MM(\overline{a})$, $\HH(b)$, and $\DD(\mu)$ towards the end of this section.

The following result, valid beyond the setting of admissible spaces (see \cite[p. 912]{FM} for an alternate proof given in terms of $\HH(b)$ spaces),
gives us a useful formula for the reproducing kernels $k_{\lambda, j}^{\HH}$.
\begin{Lemma}\label{Lem:formule-noyau-reproduisant-derive}
Let $\HH$ be a reproducing kernel Hilbert space of analytic functions on $\D$ such that
$B\HH\subset \HH$ and $\|B\|\leqslant 1$.
Then for each $j \in \N_0$ and $\lambda \in \D$ we have 
\begin{eqnarray}\label{repklambdaH}
k_{\lambda,j}^{\HH}=j!(I-\overline\lambda X_{\HH}^*)^{-(j+1)}{X_{\HH}^*}^j k_0^{\HH}.
\end{eqnarray}
\end{Lemma}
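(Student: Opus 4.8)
The plan is to establish \eqref{repklambdaH} by first treating the case $j=0$ and then differentiating (or iterating) to reach general $j$. For the base case, the key observation is the algebraic identity relating $X_{\HH}$ and the shift: for analytic $f$ on $\D$ we have $f(z) = f(0) + z (Bf)(z)$, i.e. $(I - T X_{\HH}) f = f(0)$ on $\HH$. The goal is to rewrite $k_\lambda^{\HH}$ in terms of $k_0^{\HH}$ and powers of $X_{\HH}^*$. Since $\|B\|_{\HH\to\HH}\leqslant 1$, for $\lambda\in\D$ the operator $I-\overline\lambda X_{\HH}^*$ is invertible (Neumann series), so the right-hand side of \eqref{repklambdaH} is a well-defined element of $\HH$. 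I would then verify the reproducing property directly: for $f\in\HH$,
\[
\langle f, (I-\overline\lambda X_{\HH}^*)^{-1} k_0^{\HH}\rangle_{\HH} = \langle (I-\lambda X_{\HH})^{-1} f, k_0^{\HH}\rangle_{\HH} = \big((I-\lambda X_{\HH})^{-1} f\big)(0).
\]
The remaining point is the scalar identity $\big((I-\lambda X_{\HH})^{-1} f\big)(0) = f(\lambda)$, which follows because $(I-\lambda B)^{-1} f = \sum_{k\geqslant 0}\lambda^k B^k f$ converges in $\HH$ (hence pointwise, by continuity of evaluation at $0$), and $(B^k f)(0)$ is the $k$-th Taylor coefficient of $f$, so the sum telescopes to $f(\lambda)$. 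This gives \eqref{repklambdaH} for $j=0$.

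For general $j$, I would use \eqref{eq:derivative-kernel}, namely $k_{\lambda,j}^{\HH} = \partial^j_{\overline\lambda} k_\lambda^{\HH}$, and differentiate the already-proved formula $k_\lambda^{\HH} = (I-\overline\lambda X_{\HH}^*)^{-1} k_0^{\HH}$ with respect to $\overline\lambda$. The function $\overline\lambda \mapsto (I-\overline\lambda X_{\HH}^*)^{-1}$ is operator-norm analytic on $\D$ with derivative $\partial_{\overline\lambda}(I-\overline\lambda X_{\HH}^*)^{-1} = (I-\overline\lambda X_{\HH}^*)^{-1} X_{\HH}^* (I-\overline\lambda X_{\HH}^*)^{-1} = X_{\HH}^*(I-\overline\lambda X_{\HH}^*)^{-2}$, the two operators commuting. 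Iterating $j$ times produces $j!\, {X_{\HH}^*}^j (I-\overline\lambda X_{\HH}^*)^{-(j+1)}$ applied to $k_0^{\HH}$, and since all these operators commute this equals the right-hand side of \eqref{repklambdaH}. One must justify that termwise differentiation of the Neumann series is legitimate, which is immediate from norm-convergence on compact subsets of $\D$ together with continuity of the evaluation functionals $f\mapsto f^{(j)}(\lambda)$ on $\HH$, so that operator-valued derivatives transfer to the vector-valued map $\overline\lambda\mapsto k_\lambda^{\HH}$ and then to scalars via pairing with fixed $f\in\HH$.

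The main obstacle, such as it is, is bookkeeping rather than depth: making sure the two reproducing kernels are genuinely matched (pairing against an arbitrary $f\in\HH$ on both sides and invoking density/uniqueness of the representing vector), and carefully justifying that $(I-\lambda X_{\HH})^{-1}$ acts on $\HH$ as claimed and that evaluation at a point commutes with the convergent series — both of which rest only on $\|B\|_{\HH\to\HH}\leqslant 1$ and continuity of point evaluations, exactly the hypotheses assumed. Note that admissibility condition (2), on the point spectrum of $X_{\HH}^*$, is not needed for this particular lemma (the statement is explicitly only assuming $B\HH\subset\HH$ and $\|B\|\leqslant 1$); it will be relevant later when inverting the kernels near the boundary. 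I would present the $j=0$ computation in full and then dispatch the general case with the differentiation argument, remarking that an alternative is a direct induction on $j$ using $k_{\lambda,j}^\HH = \partial_{\overline\lambda} k_{\lambda,j-1}^\HH / j$ together with the resolvent identity.
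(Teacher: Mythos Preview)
Your proposal is correct and follows the same overall strategy as the paper: verify the case $j=0$ first, then obtain the general case by differentiating in $\overline\lambda$ via \eqref{eq:derivative-kernel}.

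The only difference is in the base case. You compute $\langle f,(I-\overline\lambda X_{\HH}^*)^{-1}k_0^{\HH}\rangle_{\HH}=((I-\lambda X_{\HH})^{-1}f)(0)$ and then invoke the Neumann series together with the observation $(B^kf)(0)=\hat f(k)$ to get $f(\lambda)$. The paper instead checks the equivalent identity $(I-\overline\lambda X_{\HH}^*)k_\lambda^{\HH}=k_0^{\HH}$ directly: for any $f\in\HH$,
\[
\langle f,(I-\overline\lambda X_{\HH}^*)k_\lambda^{\HH}\rangle_{\HH}
= f(\lambda)-\lambda\,(X_{\HH}f)(\lambda)
= f(\lambda)-\lambda\cdot\frac{f(\lambda)-f(0)}{\lambda}=f(0),
\]
which avoids the series argument entirely. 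Both routes are valid; the paper's is slightly more economical since it uses only the definition of $B$ and never needs to expand the resolvent or identify Taylor coefficients. Your remark that the point-spectrum hypothesis is not used here is also consistent with the paper, which states the lemma under the weaker assumptions.
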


\begin{proof}
We first establish \eqref{repklambdaH} when $j=0$. 
Since $B$ is a contraction, the operator $(I-\overline{\lambda}X_{\HH}^*)$ is invertible when $\lambda\in \D$ and
the formula in \eqref{repklambdaH}, for $j = 0$, is equivalent to the identity 
$
(I-\overline{\lambda}X_{\HH}^*)k_{\lambda}^{\HH}
=k_{0}^{\HH}.
$ Observe how this identity holds if and only if for every $f\in \HH$, 
$$\langle f, (I-\overline{\lambda}X_{\HH}^*)k_{\lambda}^{\HH}\rangle_{\HH}
=\langle f, k_{0}^{\HH}\rangle_{\HH}=f(0).$$

To prove this last identity, observe that 
\begin{align*}
 \langle f, (I-\overline{\lambda}X_{\HH}^*)k_{\lambda}^{\HH}\rangle_{\HH}
 & = \langle f, k_{\lambda}^{\HH}\rangle _{\HH} - \lambda \langle f, X_{\HH}^{*} k_{\lambda}^{\HH}\rangle_{\HH}\\
 & = f(\lambda) - \lambda \langle X_{\HH} f, k_{\lambda}^{\HH}\rangle_{\HH}\\
 & =f(\lambda)-\lambda\frac{f(\lambda)-f(0)}{\lambda}\\
 & =f(0).
\end{align*}
This proves \eqref{repklambdaH} when $j=0$.

The formula for $k_{\lambda,j}^{\HH}$ now follows from \eqref{eq:derivative-kernel} by differentiating the identity 
$$k_{\lambda}^{\HH}=(I-\overline\lambda X_{\HH}^*)^{-1} k_0^{\HH}$$
$j$ times with respect to the variable $\overline\lambda$. 
\end{proof}


We are now ready to state the main result of this section. For fixed $\zeta_{0} \in \T$ and $\alpha > 1$ let 
$$\Gamma_{\alpha}(\zeta_0) := \big\{z \in \D: |z - \zeta_0| < \alpha (1 -|z|)\big \}$$ be a standard Stolz domain anchored at $\zeta_0$. 
We say that an $f \in \mathscr{O}(\D)$ has a finite {\em non-tangential limit} $L$ at $\zeta_0$ if $f(z) \to L$ whenever $z \to \zeta_0$ within any Stolz domain $\Gamma_{\alpha}(\zeta_0)$. 
When $\alpha=1$, $\Gamma_{1}(\zeta_0)$
degenerates to the radius connecting $0$ and $\zeta_0$ and the limit within $\Gamma_{1}(\zeta_0)$ becomes a radial limit. The non-tangential limit $L$ is denoted by $L = f(\zeta_0)$. 

The following result was inspired by an operator theory result of Ahern and Clark \cite{AC70} where they discussed non-tangential limits of functions in the classical model spaces $K_{I}$.

\begin{Theorem}\label{thmACbis}
Let $\HH$ be an admissible space, $\zeta_0\in\T$, and $N \in \N_0$. Then the following are equivalent:
\begin{enumerate}[(i)]
\item For every $f\in \HH$, the functions $f, f', f'', \ldots, f^{(N)}$ have finite non-tangential limits at $\zeta_0$. 
\item 
For each fixed $\alpha > 1$, we have $$\sup\{\|k_{\lambda,N}^{\HH}\|_{\HH}: \lambda \in \Gamma_{\alpha}(\zeta_0)\}  < \infty.$$
\item  ${X_{\HH}^*}^N k_0^{\HH} \in \Rng(I-\overline\zeta_0 X_{\HH}^*)^{N+1}$. 
\end{enumerate}
Moreover, if any one of the above equivalent conditions hold then
\begin{equation}\label{eq:noyau-reproduisant-derivee-n-ieme-range-of-operatorXbstar}
(I-\overline{\zeta_0}X_{\HH}^*)^{N+1}k_{\zeta_0,N}^{\HH}=N! {X_{\HH}^*}^N k_0^{\HH},
\end{equation}
where $k_{\zeta_0,N}^{\HH}\in \HH$ and satisfies
\[
f^{(N)}(\zeta_0)=\langle f,k_{\zeta_0,N}^{\HH}\rangle_{\HH},\qquad  f\in \HH.
\]
\end{Theorem}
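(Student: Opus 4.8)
The plan is to prove the cycle of implications (ii) $\Rightarrow$ (i) $\Rightarrow$ (iii) $\Rightarrow$ (ii), using the reproducing kernel formula from Lemma~\ref{Lem:formule-noyau-reproduisant-derive} as the main computational tool, and then to extract \eqref{eq:noyau-reproduisant-derivee-n-ieme-range-of-operatorXbstar} from the work done in proving (iii) $\Rightarrow$ (ii).

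First I would record the basic mechanism behind (ii) $\Rightarrow$ (i). Fix $\alpha>1$ and a sequence $\lambda_n \to \zeta_0$ inside $\Gamma_\alpha(\zeta_0)$. If $M := \sup\{\|k_{\lambda,N}^{\HH}\|_{\HH} : \lambda \in \Gamma_\alpha(\zeta_0)\} < \infty$, then the family $\{k_{\lambda_n,N}^{\HH}\}_n$ is bounded in $\HH$, so it has a weakly convergent subsequence, with some weak limit $g \in \HH$. For any $f \in \HH$ we then get $f^{(N)}(\lambda_n) = \langle f, k_{\lambda_n,N}^{\HH}\rangle_{\HH} \to \langle f, g\rangle_{\HH}$ along that subsequence; a standard argument (the weak limit is forced to be independent of the subsequence because the pointwise values $f^{(N)}(\lambda_n)$ are being pinned down for every $f$, including the kernels themselves) shows the full net $f^{(N)}(\lambda)$, $\lambda \in \Gamma_\alpha(\zeta_0)$, converges; this gives the non-tangential limit of $f^{(N)}$ at $\zeta_0$, and $k_{\zeta_0,N}^{\HH} := g$ is the claimed representing kernel, which already proves the last displayed formula of the theorem once we are in case (i). For the lower-order derivatives $f, f', \dots, f^{(N-1)}$ one argues the same way; the point is that condition (iii) (or (ii)) at level $N$ should be shown to imply the analogous condition at each lower level, which by Lemma~\ref{Lem:formule-noyau-reproduisant-derive} reduces to the algebraic fact that if ${X_{\HH}^*}^N k_0^{\HH} \in \Rng(I-\overline{\zeta_0}X_{\HH}^*)^{N+1}$ then ${X_{\HH}^*}^j k_0^{\HH} \in \Rng(I-\overline{\zeta_0}X_{\HH}^*)^{j+1}$ for $0 \le j \le N$ — this follows by peeling off factors of $(I - \overline{\zeta_0}X_{\HH}^*)$ and using that $X_{\HH}^*$ commutes with $(I-\overline{\zeta_0}X_{\HH}^*)^{-1}$ on $\D$, together with admissibility condition (2) to control $\ker(I - \overline{\zeta_0}X_{\HH}^*)$.

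Next, (i) $\Rightarrow$ (iii): if $f^{(N)}$ has a finite non-tangential limit at $\zeta_0$ for every $f \in \HH$, then by the uniform boundedness principle applied to the linear functionals $f \mapsto f^{(N)}(\lambda)$, $\lambda \in \Gamma_\alpha(\zeta_0)$ (which are bounded pointwise on $\HH$ once we know the limits exist — here one must be slightly careful and combine the pointwise limit with boundedness of $\|k_{\lambda,N}^\HH\|$ on compact subsets), one recovers condition (ii); then Lemma~\ref{Lem:formule-noyau-reproduisant-derive} rewrites $\|k_{\lambda,N}^{\HH}\|_{\HH}^2 = (N!)^2 \langle (I-\overline\lambda X_{\HH}^*)^{-(N+1)}{X_{\HH}^*}^N k_0^{\HH}, (I-\overline\lambda X_{\HH}^*)^{-(N+1)}{X_{\HH}^*}^N k_0^{\HH}\rangle_{\HH}$, and a standard Hilbert-space lemma (a vector $u$ lies in the range of a bounded operator $R$ with the relevant closedness, here $R = (I-\overline{\zeta_0}X_{\HH}^*)^{N+1}$, if and only if $\sup_{\lambda \to \zeta_0}\|(I - \overline\lambda X_{\HH}^*)^{-(N+1)} u\| < \infty$, via weak compactness again) converts the boundedness of these norms into the membership ${X_{\HH}^*}^N k_0^{\HH} \in \Rng(I-\overline{\zeta_0}X_{\HH}^*)^{N+1}$. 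Finally (iii) $\Rightarrow$ (ii) is the cleanest direction: if ${X_{\HH}^*}^N k_0^{\HH} = (I-\overline{\zeta_0}X_{\HH}^*)^{N+1} h$ for some $h \in \HH$, then writing $(I - \overline\lambda X_{\HH}^*) = (I - \overline{\zeta_0}X_{\HH}^*) + (\overline{\zeta_0} - \overline\lambda)X_{\HH}^*$ and expanding, one checks that $(I-\overline\lambda X_{\HH}^*)^{-(N+1)}{X_{\HH}^*}^N k_0^{\HH}$ stays bounded as $\lambda \to \zeta_0$ non-tangentially (the Stolz condition $|\zeta_0 - \lambda| \le \alpha(1-|\lambda|)$ is exactly what controls the factors $(\overline{\zeta_0}-\overline\lambda)(I-\overline\lambda X_{\HH}^*)^{-1}$, whose norm is $O(1)$ on $\Gamma_\alpha(\zeta_0)$), and then Lemma~\ref{Lem:formule-noyau-reproduisant-derive} gives (ii). Passing to the weak limit along $\lambda \to \zeta_0$ in the identity $(I-\overline\lambda X_{\HH}^*)^{N+1}k_{\lambda,N}^{\HH} = N!{X_{\HH}^*}^N k_0^{\HH}$ (which is just Lemma~\ref{Lem:formule-noyau-reproduisant-derive} rearranged) yields \eqref{eq:noyau-reproduisant-derivee-n-ieme-range-of-operatorXbstar}, since both sides are continuous in the relevant sense and the weak limit of $k_{\lambda,N}^\HH$ is $k_{\zeta_0,N}^\HH$.

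I expect the main obstacle to be the direction (i) $\Rightarrow$ (iii), specifically the step that upgrades "$f^{(N)}(\lambda)$ has a limit for each $f$" to the uniform bound $\sup_{\lambda \in \Gamma_\alpha(\zeta_0)}\|k_{\lambda,N}^{\HH}\|_{\HH} < \infty$: pointwise existence of limits does not a priori bound the functionals near $\zeta_0$ without an extra compactness input, so one has to argue via the uniform boundedness principle on a sequence $\lambda_n \to \zeta_0$ where the norms would otherwise blow up, producing (after normalization) a bounded sequence of kernels whose weak limit detects a function with no finite limit — a contradiction. The second delicate point is the Hilbert-space range lemma "$\sup\|R_\lambda^{-1}u\| < \infty \iff u \in \Rng R$" in the precise form needed here (with $R_\lambda = (I-\overline\lambda X_{\HH}^*)^{N+1} \to R = (I-\overline{\zeta_0}X_{\HH}^*)^{N+1}$ strongly but $R$ not necessarily having closed range); this should be handled by the weak-compactness argument sketched above, extracting a weak limit $h$ of $R_\lambda^{-1}u$ and verifying $R h = u$ by testing against vectors in $\HH$ and using strong convergence $R_\lambda \to R$ on each such vector.
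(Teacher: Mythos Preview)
Your approach is essentially the same as the paper's: both rewrite the kernels via Lemma~\ref{Lem:formule-noyau-reproduisant-derive}, use the uniform boundedness principle for (i)$\Rightarrow$(ii), and reduce (ii)$\Leftrightarrow$(iii) to an operator-theoretic fact about contractions. The only real difference is packaging: the paper invokes a black-box lemma (Lemma~\ref{h2723n2340}, from \cite{FM,MR2390675}) stating that for a contraction $T$ with $I-\zeta T$ injective and $\lambda_n\to\zeta$ non-tangentially, the sequence $\{(I-\lambda_n T)^{-p}x\}$ is bounded iff $x\in\Rng(I-\zeta T)^p$, with weak convergence to $(I-\zeta T)^{-p}x$ in that case; you instead reprove this inline via weak compactness and the strong convergence $(I-\overline\lambda X_{\HH}^*)^{N+1}\to(I-\overline{\zeta_0}X_{\HH}^*)^{N+1}$, which is fine and arguably more self-contained.

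One spot needs tightening. In (ii)$\Rightarrow$(i) your uniqueness-of-weak-limit argument is circular as written, since you appeal to ``the pointwise values $f^{(N)}(\lambda_n)$ being pinned down'' when that is exactly what you are proving. The fix is already in your toolbox: any subsequential weak limit $g$ of $k_{\lambda_n,N}^{\HH}$ satisfies $(I-\overline{\zeta_0}X_{\HH}^*)^{N+1}g=N!\,{X_{\HH}^*}^N k_0^{\HH}$ (pass to the limit in Lemma~\ref{Lem:formule-noyau-reproduisant-derive}), and injectivity of $I-\overline{\zeta_0}X_{\HH}^*$ (from $\sigma_p(X_{\HH}^*)\subset\D$) forces $g$ to be unique, so the full net converges weakly. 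For the lower-order derivatives the paper simply cites \cite[Theorem~21.26]{FM}; your algebraic descent at the level of (iii) does work (with $A=I-\overline{\zeta_0}X_{\HH}^*$ and $v=\zeta_0^{\,1-N}{X_{\HH}^*}^{N-1}k_0^{\HH}$, iterate $v=Av+A^{N+1}h$ to get $v\in\Rng A^N$), but ``peeling off factors'' undersells the computation needed.
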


The proof of this requires the following technical lemma from \cite[Cor. 21.22]{FM} (see also \cite{MR2390675}) which generalizes an operator theory result of Ahern and Clark \cite{AC70}.

\begin{Lemma}\label{h2723n2340}
Let $T$ be a contraction on a Hilbert space $\HH$, $\zeta \in \T$, and $\{\lambda_{n}\}_{n \geqslant 1} \subset \D$ with the following properties: 
\begin{enumerate}
\item $(I - \zeta T)$ is injective; 
\item $\lambda_{n}$ tends to $\zeta$ non-tangentially as $n \to \infty$.
\end{enumerate}
Let $x \in \HH$ and $p \in \N$. Then the sequence 
$$\big\{(I - \lambda_n T)^{-p} x\big\}_{n \geqslant 1}$$
is uniformly bounded if and only if $x \in \Rng(I - \zeta T)^{p}$, in which case, 
$$(I - \lambda_n T)^{-p} x \to (I - \zeta T)^{-p} x$$ weakly in $\HH$. 
\end{Lemma}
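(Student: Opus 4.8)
The plan is to analyze the resolvent $(I-\lambda_n T)^{-1}$, which is well defined for $\lambda_n\in\D$ because $\|T\|\le 1$ makes the Neumann series converge, yielding the basic estimate $\|(I-\lambda_n T)^{-1}\|\le (1-|\lambda_n|)^{-1}$. The entire argument will rest on comparing $(I-\lambda_n T)^{-1}$ with the boundary operator $I-\zeta T$ through the ``transfer'' operator
\[
R_n:=(I-\lambda_n T)^{-1}(I-\zeta T)=I+(\lambda_n-\zeta)\,T\,(I-\lambda_n T)^{-1},
\]
the identity being a one-line manipulation after writing $I-\zeta T=(I-\lambda_n T)+(\lambda_n-\zeta)T$ and using that $T$ commutes with $(I-\lambda_n T)^{-1}$. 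Hypothesis (2) furnishes a constant $\alpha\ge 1$ with $|\zeta-\lambda_n|\le\alpha(1-|\lambda_n|)$ for all large $n$, and combining this with the resolvent bound gives $\|R_n\|\le 1+\alpha$ uniformly in $n$. This uniform bound, which is precisely where non-tangential (rather than merely radial or unrestricted) approach enters, is the engine of the proof.

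For the forward implication, assuming $x\in\Rng(I-\zeta T)^p$, I would write $x=(I-\zeta T)^p y$ and observe that
\[
(I-\lambda_n T)^{-p}x=\big[(I-\lambda_n T)^{-1}(I-\zeta T)\big]^p y=R_n^{\,p}y,
\]
so that $\|(I-\lambda_n T)^{-p}x\|\le(1+\alpha)^p\|y\|$, giving uniform boundedness. The subtlety worth flagging here is that $R_n$ does \emph{not} converge strongly to $I$, since $R_n y-y=(\lambda_n-\zeta)T(I-\lambda_n T)^{-1}y$ is only bounded, not small; hence one cannot extract the limit by naively letting $R_n\to I$, which is exactly why the conclusion is weak convergence and must be reached indirectly.

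For the converse I would invoke weak sequential compactness of bounded sets in the Hilbert space $\HH$: uniform boundedness of $\{(I-\lambda_n T)^{-p}x\}$ produces a subsequence converging weakly to some $z\in\HH$. I would then apply $(I-\lambda_{n_k}T)^p$, using that $\lambda\mapsto(I-\lambda T)^p$ is a polynomial in $\lambda$ with bounded-operator coefficients, hence norm-continuous, so $(I-\lambda_{n_k}T)^p\to(I-\zeta T)^p$ in operator norm, together with the elementary fact that a norm-convergent sequence of operators applied to a weakly convergent sequence of vectors converges weakly to the product of the limits. Since $(I-\lambda_{n_k}T)^p(I-\lambda_{n_k}T)^{-p}x\equiv x$, passing to the limit yields $(I-\zeta T)^p z=x$, that is, $x\in\Rng(I-\zeta T)^p$. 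I expect this weak-compactness-plus-norm-continuity step to be the main obstacle, since the operators and vectors must be arranged so that the constant value $x$ is recovered as the weak limit.

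Finally, for identifying the limit and upgrading to convergence of the \emph{full} sequence, I would use hypothesis (1): injectivity of $I-\zeta T$ makes $(I-\zeta T)^p$ injective, so the equation $(I-\zeta T)^p z=x$ has the unique solution $z=(I-\zeta T)^{-p}x=:y$. Thus every weakly convergent subsequence of the bounded sequence $\{(I-\lambda_n T)^{-p}x\}$ has the same limit $y$; the standard argument that a bounded sequence all of whose weakly convergent subsequences share a common limit must itself converge weakly to that limit then gives $(I-\lambda_n T)^{-p}x\to(I-\zeta T)^{-p}x$ weakly, completing the proof.
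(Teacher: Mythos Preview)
The paper does not actually prove this lemma; it is quoted from \cite[Cor.~21.22]{FM} (with the original idea going back to Ahern--Clark). Your argument is correct and is essentially the standard one found in those references: the key transfer operator $R_n=(I-\lambda_nT)^{-1}(I-\zeta T)=I+(\lambda_n-\zeta)T(I-\lambda_nT)^{-1}$, the uniform bound $\|R_n\|\le 1+\alpha$ coming from non-tangential approach, the weak-compactness extraction, and the injectivity argument to pin down the limit are exactly the ingredients used there.

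One small remark: in the forward step you use $(I-\lambda_nT)^{-p}(I-\zeta T)^p=R_n^{\,p}$, which relies on the commutation of $(I-\lambda_nT)^{-1}$ with $I-\zeta T$. This is immediate since both are (limits of) polynomials in $T$, but it is worth saying explicitly. Everything else is clean.
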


\begin{proof}[Proof of Theorem \ref{thmACbis}]
$(i) \implies (ii)$: Since the norms of the reproducing kernels $k_{\lambda, N}^{\HH}$ are the norms of the evaluation functionals $f \mapsto f^{(N)}(\lambda)$, we can apply the uniform boundedness principle to see, for fixed $\alpha > 1$, that if the $N$-th derivative 
of every function in $\HH$
has a finite limit as $\lambda \to \zeta_0$ with $\lambda \in \Gamma_{\alpha}(\zeta_0)$, 
then the norms of the kernels $k_{\lambda,N}^{\HH}$
are uniformly bounded for $\lambda \in \Gamma_{\alpha}(\zeta_0)$. 

$(ii)\implies(iii)$: By
Lemma~\ref{Lem:formule-noyau-reproduisant-derive}, the vectors 
$$(I-\overline{z_n}X_{\HH}^*)^{-(N+1)}{X_{\HH}^*}^N k_0^{\HH}$$ are uniformly bounded for any sequence $\{z_n\}_{n \geqslant 1} \subset \Gamma_{\alpha}(\zeta_0)$ tending to $\zeta_0$. 
By our assumption $\sigma_p(X_{\HH}^*)\subset \D$ (Definition \ref{Def-admissible}) we see that
the operator $I-\overline{\zeta_0}X_{\HH}^*$ is injective. Now apply Lemma \ref{h2723n2340} to conclude that ${X_{\HH}^*}^N k_0^{\HH} \in \Rng(I-\overline\zeta_0 X_{\HH}^*)^{N+1}$. 


$(iii)\implies (i)$: Again using Lemma \ref{h2723n2340}, we see that 
\[
(I-\overline{z_n}X_{\HH}^*)^{-(N+1)}{X_{\HH}^*}^N k_0^{\HH}\to (I-\overline{\zeta_0}X_{\HH}^*)^{-(N + 1)}{X_{\HH}^*}^N k_0^{\HH}
\]
weakly for any sequence $\{z_n\}_{n \geqslant 1} \subset \Gamma_{\alpha}(\zeta_0)$ tending 
to $\zeta_0$.
However, Lemma~\ref{Lem:formule-noyau-reproduisant-derive} says that the left hand side of the identity above is precisely  
$\frac{1}{N!}\,k_{z_n,N}^{\HH}$.
Hence, for any $f\in \HH$, the $N$-th derivative $f^{(N)}(z_n)$ has a finite limit as $z_n$ tends 
to $\zeta_0$ within $\Gamma_{\alpha}(\zeta_0)$. 

To see that the lower order derivatives $f, f', f'', \ldots, f^{(N - 1)}$ have finite non-tangential limits at $\zeta_0$, use an argument from the proof of Theorem 21.26 in \cite{FM}.

Finally, the equivalent conditions of the theorem show that the linear functional $f\mapsto f^{(N)}(\zeta_0)$ is continuous on $\HH$ and thus, by the Riesz representation theorem, it is induced by a kernel $k_{\zeta_0,N}^{\HH} \in \HH$ satisfying 
\[
(I-\overline{\zeta_0}X_{\HH}^*)^{-(N+1)}{X_{\HH}^*}^N k_0^{\HH}=\frac{1}{N!}\,k_{\zeta_0,N}^{\HH}.
\]
This proves \eqref{eq:noyau-reproduisant-derivee-n-ieme-range-of-operatorXbstar}.
\end{proof}

This next result helps us to produce a large class of admissible reproducing kernel Hilbert spaces. 

\begin{Lemma}\label{Fallah}
Let $\HH$ be a $B$-invariant reproducing kernel Hilbert space of analytic functions on $\D$ such that
the analytic polynomials are dense in $\HH$. Then
$
 \sigma_p(X_{\HH}^*)  = \emptyset.
$
In particular, if  $X_{\HH} = B|_{\HH}$ acts as a contraction on $\HH$, then $\HH$ is an admissible space. 
\end{Lemma}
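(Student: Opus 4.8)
The plan is to show that $X_{\HH}^*$ has empty point spectrum by a direct computation against the monomials, whose density is the crucial hypothesis. First I would suppose, toward a contradiction, that there exists $\mu \in \C$ and a nonzero $g \in \HH$ with $X_{\HH}^* g = \mu g$. The idea is to pair this eigenvalue equation with the monomials $z^n$, which lie in $\HH$ by hypothesis, and exploit the adjoint relation: for each $n \in \N_0$,
\[
\langle z^n, X_{\HH}^* g \rangle_{\HH} = \langle X_{\HH} z^n, g \rangle_{\HH} = \langle B z^n, g \rangle_{\HH}.
\]
Since $B z^0 = 0$ and $B z^n = z^{n-1}$ for $n \geqslant 1$, the left-hand side equals $\overline{\mu} \langle z^n, g\rangle_{\HH}$, so writing $c_n := \langle z^n, g\rangle_{\HH}$ we obtain $c_0 \overline{\mu} = 0$ and $c_{n-1} = \overline{\mu}\, c_n$ for all $n \geqslant 1$.

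Next I would analyze this recursion. If $\mu \neq 0$, then $c_0 = 0$ forces $c_n = 0$ for all $n$ by induction (running the relation $c_{n-1} = \overline{\mu} c_n$ backwards, or rather: $c_{n} = c_{n-1}/\overline{\mu}$, so $c_0 = 0 \Rightarrow c_1 = 0 \Rightarrow \cdots$), and then $g \perp z^n$ for every $n$; by density of the polynomials this gives $g = 0$, a contradiction. If $\mu = 0$, the relation becomes $c_{n-1} = 0$ for all $n \geqslant 1$, i.e. $c_n = 0$ for all $n \in \N_0$, and again density of polynomials yields $g = 0$, a contradiction. Hence no eigenvalue exists and $\sigma_p(X_{\HH}^*) = \emptyset$. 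For the final sentence, if moreover $X_{\HH}$ acts as a contraction, then conditions (1) and (2) of Definition~\ref{Def-admissible} both hold --- (1) because $B\HH \subset \HH$ and $\|B\|_{\HH \to \HH} = \|X_{\HH}\| \leqslant 1$, and (2) because $\emptyset \subset \D$ trivially --- so $\HH$ is admissible.

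I do not expect a serious obstacle here; the argument is essentially formal once one recognizes that pairing against monomials converts the eigenvalue equation into a scalar recursion. The one small point to be careful about is the direction of the recursion and the role of $n = 0$: the equation $X_{\HH}^* g = \mu g$ tested against $z^0$ gives $0 = \overline{\mu} c_0$, which is the seed that kills everything when $\mu \neq 0$, while for $\mu = 0$ one reads off the vanishing of all coefficients directly. A secondary point worth a remark is that one should note $\overline{\mu}$ rather than $\mu$ appears because the $\HH$ inner product is conjugate-linear in the second slot; this does not affect the conclusion since $\overline{\mu} = 0 \iff \mu = 0$ and $|\overline\mu| = |\mu|$. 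It may also be worth observing in passing that this shows more than claimed: $X_{\HH}^*$ restricted to such a space is injective, and indeed $\bigcap_{n} \ker (X_{\HH}^*)$-type considerations are moot, since the absence of eigenvalues is exactly what condition (2) of admissibility demands.
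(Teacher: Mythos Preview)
Your argument is correct and follows essentially the same route as the paper: pair the eigenvalue equation with the monomials $z^n$, use $X_{\HH} z^n = z^{n-1}$ (and $X_{\HH} 1 = 0$) to obtain a scalar recursion, and conclude from the density of polynomials that the eigenvector must vanish. The only cosmetic difference is that you place $g$ in the second slot of the inner product (hence the appearance of $\overline{\mu}$), whereas the paper places the eigenvector in the first slot; the logic is otherwise identical.
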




\begin{proof}
Suppose $\lambda\in \C$ and $f \in \HH \setminus \{0\}$ with 
$X_{\HH}^*f= \lambda f$. On one hand,
$\langle X_{\HH}^*f,z^n\rangle_{\HH}=\lambda \langle f,z^n\rangle_{\HH}$, while
on the other hand,
$$\langle X_{\HH}^*f,z^n\rangle_{\HH}=\langle f,X_{\HH} z^n\rangle_{\HH} = \langle f,z^{n-1}\rangle_{\HH}, \quad n\geqslant 1.$$
Combining these two facts yields 
\begin{equation}\label{oiweurowieur}
\lambda \langle f,z^n\rangle_{\HH} = \langle f,z^{n-1}\rangle_{\HH} \quad n\geqslant 1.
\end{equation}
If $\lambda = 0$, the previous identity shows that $\langle f, z^k\rangle_{\HH} = 0$ for all $k \geqslant 0$. By the density of the polynomials in $\HH$ we see that $f = 0$ -- a contradiction.

If $\lambda \not = 0$ then 
  $$\lambda \langle f,1\rangle_{\HH}=\langle X_{\HH}^*f,1\rangle_{\HH}=\langle f,X_{\HH} 1\rangle_{\HH}=0$$  and thus 
$\langle f, 1\rangle_{\HH} = 0$.
Use this last identity and repeatedly apply \eqref{oiweurowieur} to see that
$
\langle f,z^k \rangle_{\HH}=0$ for all $k\geqslant 0.
$
Again, by our assumption that the polynomials are dense in $\HH$, we see that $f=0$. 
\end{proof}

\begin{Remark}\label{Remark-CK}
If  $\HH$ contains all of the Cauchy kernels $k_{w}$, $w \in \D$ (see \eqref{Cauchy-kernel}), then we can use the fact that 
$X_{\HH} k_{w} = \overline{w} k_{w}$
to replace the identity in \eqref{oiweurowieur} with 
$\lambda \langle{f, k_w} \rangle_{\HH} = w \langle f, k_w \rangle_{\HH}$. Thus the hypothesis ``the polynomials are dense in $\HH$''  in Lemma~\ref{Fallah} can be replaced with ``the linear span of Cauchy kernels are dense in $\HH$''. We would like to thank Omar El Fallah for some fruitful discussions concerning an earlier version of this result. 
\end{Remark}

Here are three applications of Theorem \ref{thmACbis}. 

\subsection*{$\MM(\overline{a})$-spaces} For $a \in H^{\infty}$ we want to show that $\MM(\overline{a})$ is admissible. By Proposition \ref{Prop:outer-wlog} we can assume that $a$ is outer. To verify that $\MM(\overline{a})$ is admissible, we will check the hypothesis of Lemma \ref{Fallah}. 
It is clear that $\MM(\overline{a})$ is $B$-invariant (use the identity
$T_{\overline{z}} T_{\overline{a}} = T_{\overline{a}} T_{\overline{z}}$  from Proposition \ref{Toe-facts}(3)).

To show that $B=T_{\overline{z}}$ is contractive on $\MM(\overline{a})$, notice that for any $g \in H^2$ we have 
$$\|B T_{\overline{a}} g\|_{\overline{a}} = \|T_{\overline{z}} T_{\overline{a}} g\|_{\overline{a}} =  \|T_{\overline{a}} T_{\overline{z}} g\|_{\overline{a}} = \|T_{\overline{z}} g\|_{H^2} \leqslant \|g\|_{H^2} = \|T_{\overline{a}} g\|_{\overline a}.$$
Thus $\|B\|_{\MM(\overline{a}) \to \MM(\overline{a})} \leqslant 1$.

To finish, using Lemma~\ref{Fallah} and Remark~\ref{Remark-CK}, we need to show that the Cauchy kernels $k_{\lambda}$ belong to $\MM(\overline{a})$ and have dense  linear span.  From Proposition \ref{Toe-facts}(1) we have $k_{\lambda}=T_{\overline{a}}(k_{\lambda}/\overline{a(\lambda)})\in
\MM(\overline{a})$. Furthermore, since $T_{\overline{a}}$ is a partial isometry from $H^2$ \emph{onto} $\MM(\overline{a})$, it maps a dense subset of $H^2$ onto a dense subset of $\MM(\overline{a})$. Thus the density of the linear span of $k_\lambda$, $\lambda\in\D$, in $\MM(\overline{a})$ follows from the well-known density of this span in $H^2$. 
We remark that one can also obtain the admissibility of $\MM(\overline a)$ by showing the density of the polynomials in $\MM(\overline a)$ \cite[p. 745]{FM}.

Using Theorem \ref{thmACbis}, we obtain the following explicit characterization
of the boundary behavior for $\MM(\overline{a})$. 


\begin{Corollary}\label{AC}
Let $a \in H^{\infty}$,  $\zeta_0\in\T$, and $N\in \N_0$. Then for every
$f\in\MM(\overline{a})$, the functions $f,f',f'',\ldots,f^{(N)}$ have finite non-tangential limits at $\zeta_0$ 
if and only if
\begin{eqnarray}\label{ACa}
 \int_{0}^{2 \pi} \frac{|a(e^{it})|^2}{|e^{it}-\zeta_0|^{2N+2}}dt<\infty.
\end{eqnarray}
We will write $\zeta_0\in (AC)_{\overline{a},N}$ if the condition \eqref{ACa} holds. In this case, we have 
\[
k_{\zeta_0,\ell}^{\overline{a}}=T_{\overline a}(ak_{\zeta_0,\ell}),\qquad 0\leqslant \ell\leqslant N,
\]
where 
\[
ak_{\zeta_0,\ell}= \ell! \frac{z^\ell a}{(1-\overline{\zeta_0} z)^{\ell+1}}.
\]
Moreover, for each $\alpha > 1$ we have
\[
\lim_{\substack{\lambda\to\zeta_0\\\lambda\in\Gamma_\alpha(\zeta_0)}}\|k_{\lambda,\ell}^{\overline a}-k_{\zeta_0,\ell}^{\overline a}\|_{\overline a}=0.
\]
\end{Corollary}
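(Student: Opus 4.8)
The plan is to derive Corollary \ref{AC} from Theorem \ref{thmACbis} applied to the admissible space $\HH = \MM(\overline{a})$, after identifying the abstract operator $X_{\HH}^* = B|_{\MM(\overline a)}^*$ and the kernel $k_0^{\overline a}$ concretely in terms of Toeplitz operators. The first step is to translate condition (iii) of Theorem \ref{thmACbis} into the integral condition \eqref{ACa}. Since $T_{\overline a}$ is a partial isometry from $H^2$ onto $\MM(\overline a)$ with initial space $H^2$ (here $a$ is outer, so $T_{\overline a}$ is injective by Proposition \ref{Toe-facts}(2)), the map $T_{\overline a}$ intertwines $S^* = T_{\overline z}$ on $H^2$ with $X_{\HH} = B|_{\MM(\overline a)}$; concretely $X_{\HH} T_{\overline a} = T_{\overline a} T_{\overline z}$, and taking adjoints in the respective inner products, $T_{\overline a}^{\MM(\overline a) \to H^2}$ being the inverse partial isometry, one gets that $X_{\HH}^*$ is unitarily equivalent (via $T_{\overline a}$) to $T_z = S$ on $H^2$. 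Also $k_0^{\overline a} = T_{\overline a}(a) = T_{|a|^2}1$ by Corollary \ref{RK-formula} with $\lambda = 0$, $n = 0$; and $k_0^{\overline a}$ corresponds under $T_{\overline a}$ to the $H^2$-vector $a$ itself. Hence condition (iii), namely ${X_{\HH}^*}^N k_0^{\overline a} \in \Rng(I - \overline{\zeta_0} X_{\HH}^*)^{N+1}$, transports to the $H^2$ statement $S^N a \in \Rng(I - \overline{\zeta_0} S)^{N+1} = \Rng((I - \overline{\zeta_0}S)^{N+1})$, i.e. $z^N a(z) = (1 - \overline{\zeta_0}z)^{N+1} h(z)$ for some $h \in H^2$, equivalently $a(z)/(1-\overline{\zeta_0}z)^{N+1} \in H^2$ (since $z^N$ is unimodular on $\T$ and invertible in $H^\infty$ up to the harmless factor $z^N$). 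Taking $H^2$-norms and using $|1 - \overline{\zeta_0}e^{it}| = |e^{it} - \zeta_0|$, this is exactly the finiteness of \eqref{ACa}. This gives the equivalence of (i) of Corollary \ref{AC} with \eqref{ACa}.

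Next I would compute the kernels explicitly. Assuming $\zeta_0 \in (AC)_{\overline a, N}$, then for $0 \le \ell \le N$ the function $a k_{\zeta_0,\ell} = \ell! z^\ell a/(1 - \overline{\zeta_0}z)^{\ell+1}$ lies in $H^2$: indeed its modulus on $\T$ is $\ell!|a(e^{it})|/|e^{it}-\zeta_0|^{\ell+1}$, and $\int |a|^2/|e^{it}-\zeta_0|^{2\ell+2}\,dt \le \int|a|^2/|e^{it}-\zeta_0|^{2N+2}\,dt < \infty$ whenever $|e^{it}-\zeta_0| \le $ const, while away from $\zeta_0$ the integrand is bounded; so $ak_{\zeta_0,\ell} \in L^2(\T)$, and being a boundary function of something analytic in $\D$ (a ratio of $H^2$ by a bounded-below-on-compacts analytic function with its only zero at $\zeta_0 \in \T$), it is in $H^2$. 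Then $T_{\overline a}(a k_{\zeta_0,\ell}) \in \MM(\overline a)$ is a well-defined element; one checks it is the reproducing kernel $k_{\zeta_0,\ell}^{\overline a}$ by verifying $\langle f, T_{\overline a}(a k_{\zeta_0,\ell})\rangle_{\overline a} = f^{(\ell)}(\zeta_0)$ for all $f \in \MM(\overline a)$. Writing $f = T_{\overline a}g$, the left side equals $\langle g, a k_{\zeta_0,\ell}\rangle_{H^2} = \langle T_{\overline a} g, k_{\zeta_0,\ell}\rangle$ interpreted appropriately; the cleanest route is to pass to the limit from interior points: for $\lambda \in \D$, $k_{\lambda,\ell}^{\overline a} = T_{\overline a}(a k_{\lambda,\ell})$ by Corollary \ref{RK-formula} (noting $T_{\overline a} a k_{\lambda,\ell} = T_{|a|^2}k_{\lambda,\ell}$), and as $\lambda \to \zeta_0$ nontangentially the right side converges in $H^2$ to $ak_{\zeta_0,\ell}$ by dominated convergence (the dominating function being controlled by the $(AC)$ integrand), hence $T_{\overline a}(a k_{\lambda,\ell}) \to T_{\overline a}(a k_{\zeta_0,\ell})$ in $\MM(\overline a)$; combined with Theorem \ref{thmACbis}'s assertion that $k_{\lambda,N}^{\overline a}$ stays bounded and the weak-convergence statement there, this pins down the limit as the asserted kernel.

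For the final norm-convergence claim, $\|k_{\lambda,\ell}^{\overline a} - k_{\zeta_0,\ell}^{\overline a}\|_{\overline a} \to 0$ as $\lambda \to \zeta_0$ in $\Gamma_\alpha(\zeta_0)$: since $T_{\overline a}$ is a partial isometry (isometric on $H^2$ here, $a$ outer), $\|k_{\lambda,\ell}^{\overline a} - k_{\zeta_0,\ell}^{\overline a}\|_{\overline a} = \|a k_{\lambda,\ell} - a k_{\zeta_0,\ell}\|_{H^2}$, and the latter tends to $0$ by the dominated convergence argument sketched above (pointwise a.e.\ convergence of $a(e^{it}) k_{\lambda,\ell}(e^{it})$ to $a(e^{it})k_{\zeta_0,\ell}(e^{it})$ as $\lambda \to \zeta_0$ within a Stolz angle, with an integrable majorant coming from the Stolz-angle estimate $|1 - \overline\lambda e^{it}| \gtrsim_\alpha |e^{it} - \zeta_0|$ which holds uniformly in $\lambda \in \Gamma_\alpha(\zeta_0)$). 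I expect the main obstacle to be precisely this uniform majorization: one must check that for $\lambda$ ranging over $\Gamma_\alpha(\zeta_0)$ the functions $|a(e^{it})|^{\ell+1}/|1 - \overline\lambda e^{it}|^{\ell+1}$ (or their squares) are dominated by a single $L^1$ function, which reduces to the standard fact that $|1 - \overline\lambda e^{it}| \ge c_\alpha(|1 - |\lambda|| + |e^{it} - \lambda/|\lambda||) \ge c_\alpha' |e^{it} - \zeta_0|$ for $\lambda$ in the Stolz domain; granting this, the majorant is $(c_\alpha')^{-(\ell+1)} \cdot \ell! \cdot |a(e^{it})|/|e^{it}-\zeta_0|^{\ell+1}$, whose square is integrable by \eqref{ACa}, and everything else is routine dominated convergence plus the partial-isometry bookkeeping.
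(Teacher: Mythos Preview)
Your proof is correct. The main equivalence is obtained by a genuinely different route than the paper's. The paper works with condition (ii) of Theorem~\ref{thmACbis}: it computes $\|k_{\lambda,N}^{\overline a}\|_{\overline a}^2=(N!)^2\int|a(e^{it})|^2/|e^{it}-\lambda|^{2N+2}\,dm$ directly from Corollary~\ref{RK-formula}, uses the Stolz estimate $|e^{it}-\lambda|\geqslant (\alpha+1)^{-1}|e^{it}-\zeta_0|$ to bound these norms uniformly when \eqref{ACa} holds, and for the converse applies Fatou's lemma to the family of integrals. You instead transport condition (iii) through the unitary $T_{\overline a}:H^2\to\MM(\overline a)$, under which $X_{\HH}^*$ becomes $S=T_z$ and $k_0^{\overline a}$ becomes $a$, so (iii) reads $z^Na\in(1-\overline{\zeta_0}z)^{N+1}H^2$; since $(1-\overline{\zeta_0}z)^{N+1}$ is outer, Smirnov's theorem reduces this to the $L^2$ integrability of $|a|/|e^{it}-\zeta_0|^{N+1}$, which is \eqref{ACa}. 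Your argument is cleaner here (no Fatou), though it relies on Smirnov's theorem, which you invoke only implicitly (``being a boundary function of something analytic in $\D$\ldots it is in $H^2$''); it would be worth naming it. For the identification of $k_{\zeta_0,\ell}^{\overline a}$ and the norm convergence you do exactly what the paper does: dominated convergence for $ak_{\lambda,\ell}\to ak_{\zeta_0,\ell}$ in $H^2$ with majorant supplied by the same Stolz estimate, then transfer through the isometry $T_{\overline a}$. One small slip: in your majorant you wrote $|a(e^{it})|^{\ell+1}$ where you mean $|a(e^{it})|$.
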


\begin{proof}
Corollary~\ref{RK-formula} gives us
$$
\|k_{\lambda,N}^{\overline{a}}\|_{\overline{a}}^2=(N!)^2 \int_{0}^{2\pi}\dfrac{|a(e^{it})|^2}{|e^{it}-\lambda|^{2N+2}}\,\frac{dt}{2\pi}.
$$
If $\lambda$ approaches $\zeta_0$ from within a fixed Stolz domain $\Gamma_\alpha(\zeta_0)$, then
$$
\frac{1}{|e^{it}-\lambda|}\leqslant \frac{\alpha+1}{|e^{it}-\zeta_0|}, \quad t \in [0, 2 \pi],
$$
and so 
\begin{equation}\label{z9eneyskl}
\frac{|a(e^{it})|^2}{|e^{it}-\lambda|^{2N+2}}\leqslant (\alpha+1)^{2N+2} \frac{|a(e^{it})|^2}{|e^{it}-\zeta_0|^{2N+2}}.
\end{equation}
If 
$$
\int_{0}^{2\pi}\dfrac{|a(e^{it})|^2}{|e^{it}-\zeta_0|^{2N+2}}\,\frac{dt}{2\pi}<\infty
$$
we see that 
\begin{equation}\label{eq:norme-borne-stotlz}
\sup\{\|k_{\lambda,N}^{\overline{a}}\|_{\overline{a}}:\lambda\in \Gamma_{\alpha}(\zeta_0)\}<\infty. 
\end{equation}
Now apply Theorem \ref{thmACbis}.

Conversely, if for every $f\in\MM(\overline{a})$, the functions $f,f',f'',\ldots,f^{(N)}$ have non-tangential limits at $\zeta_0$, then Theorem~\ref{thmACbis} implies that for each fixed $\alpha>1$, the condition \eqref{eq:norme-borne-stotlz} is satisfied. Thus  
\[
\sup_{\lambda\in\Gamma_{\alpha}(\zeta_0)}\int_0^{2\pi}\frac{|a(e^{it})|^2}{|e^{it}-\lambda|^{2N+2}}\frac{dt}{2\pi}<\infty.
\]
By Fatou's Lemma 
\[
\int_{0}^{2\pi}\dfrac{|a(e^{it})|^2}{|e^{it}-\zeta_0|^{2N+2}}\,\frac{dt}{2\pi}\leqslant \liminf_{\substack{\lambda\to\zeta_0\\\lambda\in\Gamma_\alpha(\zeta_0)}} \int_0^{2\pi}\frac{|a(e^{it})|^2}{|e^{it}-\lambda|^{2N+2}}\frac{dt}{2\pi}<\infty.
\]

%
%
Now let $\zeta_0\in (AC)_{\overline{a},N}$. Then, for any  $f=T_{\overline a}g\in\MM(\bar a)$ and $0\leqslant \ell\leqslant N$, we have
\[
\langle f,T_{\overline{a}}(ak_{\zeta_0,\ell}) \rangle_{\overline a}=\langle g,ak_{\zeta_0,\ell}\rangle_{H^2}.
\]
Note that $ak_{\lambda,\ell}\to ak_{\zeta_0,\ell}$ in $H^2$ as $\lambda \in \zeta_0$ from within $\Gamma_{\alpha}(\zeta_0)$. Indeed this is true pointwise and, by using the inequality in \eqref{z9eneyskl} and the dominated convergence theorem, we also have $$\|ak_{\lambda,\ell}\|_{H^2}\to \|ak_{\zeta_0,\ell}\|_{H^2}$$
as $\lambda \to \zeta_0$ from within $\Gamma_{\alpha}(\zeta_0)$. By a standard Hilbert space argument we have
\begin{equation}\label{kerconv}
\|ak_{\lambda,\ell} - ak_{\zeta_0,\ell}\|_{H^2} \to 0.
\end{equation}

The above analysis says that
\begin{align*}
\langle f,T_{\overline{a}}(ak_{\zeta_0,\ell}) \rangle_{\overline a} & = \lim_{\substack{\lambda\to\zeta_0\\\lambda\in\Gamma_\alpha(\zeta_0)}} \langle g,ak_{\lambda,\ell}\rangle_{H^2}\\
& = \lim_{\substack{\lambda\to\zeta_0\\\lambda\in\Gamma_\alpha(\zeta_0)}}\langle f,T_{\overline a}ak_{\lambda,\ell}\rangle_{\overline a}.
\end{align*}
By Corollary~\ref{RK-formula}, $T_{\overline a}(ak_{\lambda,\ell})=k_{\lambda,\ell}^{\overline a}$, whence
\begin{align*}
\langle f,T_{\overline{a}}(ak_{\zeta_0,\ell}) \rangle_{\overline a} & =\lim_{\substack{\lambda\to\zeta_0\\\lambda\in\Gamma_\alpha(\zeta_0)}}\langle f,k_{\lambda,\ell}^{\overline a}\rangle_{\overline a}\\
& =\lim_{\substack{\lambda\to\zeta_0\\\lambda\in\Gamma_\alpha(\zeta_0)}}f^{(\ell)}(\lambda)\\
& = f^{(\ell)}(\zeta_0)\\
& =\langle f,k_{\zeta_0,\ell}^{\overline a}\rangle_{\overline a},
\end{align*}
which proves that $k_{\zeta_0,\ell}^{\overline{a}}=T_{\overline a}(ak_{\zeta_0,\ell})$.
Finally, from \eqref{kerconv}
$$
\|k_{\lambda,\ell}^{\overline a}-k_{\zeta_0,\ell}^{\overline a}\|_{\overline a}=\|ak_{\lambda,\ell}-ak_{\zeta_0,\ell}\|_{H^2}
 \to 0, \quad \lambda \to \zeta_0, \lambda \in \Gamma_{\alpha}(\zeta_0).\qedhere$$
\end{proof}

\begin{Remark}\label{rem:Taylor}
\hfill
\begin{enumerate}
\item In a general admissible space $\HH$ we see that if 
$$\sup\{ \|k_{\lambda, N}^{\HH}\|_{\HH}: \lambda \in \Gamma_{\alpha}(\zeta_0)\} < \infty$$
for each $\alpha > 1$, then 
$$k_{\lambda, N}^{\HH} \to k_{\zeta_0, N}^{\HH}$$
weakly in $\HH$ as $\lambda \to \zeta_0$ non-tangentially. However, it is not immediately clear if we also have norm convergence of the kernels. Corollary \ref{AC} shows this is true when $\HH = \MM(\overline{a})$. See also \cite{FM} where this was shown to be true when $\HH$ is one of the de Branges-Rovnyak spaces $\HH(b)$. 
\item The condition \eqref{ACa} yields an estimate on the rate of decrease of the outer function $a$, along with its derivatives, at the distinguished point $\zeta_0$. Indeed, using the facts that $(\zeta_0-z)^{N+1}$ is an outer function, 
along with the condition \eqref{ACa}, and Smirnov's theorem \cite{Duren} (if the boundary function of an outer function belongs to $L^2$ then the function belongs to $H^2$), the function 
$$
h(z):= \frac{a(z)}{(z-\zeta_0)^{N+1}}
$$
belongs to $H^2$. Recall the following standard estimates for the derivatives of $h \in H^2$:
$$
|h^{(\ell)}(r\zeta)|=o((1-r)^{-\ell-\frac{1}{2}}),\quad r\to 1^{-}.
$$
Thus 
Leibniz' formula yields 
\begin{align*}
a^{(k)}(r\zeta_0) & =\sum_{\ell=0}^k\binom{k}{\ell}h^{(\ell)}(r\zeta_0)\frac{d^{k-\ell}}{dz^{k-\ell}}
 (z-\zeta_0)^{N+1}\Big|_{z=r\zeta_0}\\
 & = o((1-r)^{N+\frac{1}{2}-\ell}), \quad r\to 1^{-}.
\end{align*}
In particular, we see that the functions $a,a',\dots,a^{(N)}$ have radial (and even non-tangential) limits 
$a^{(\ell)}(\zeta_0)$ which vanish for each $0\leqslant \ell\leqslant N$. 
\end{enumerate}
\end{Remark}

Corollary~\ref{AC} yields the following interesting observation which shows a sharp difference between
$\MM(\overline{a})$ spaces and the model, or more generally, de Branges-Rovnyak spaces $\HH(b)$. More precisely, when $\log (1 - |b|) \not \in L^1$, it is sometimes the case that every function in $\HH(b)$ 
can be analytically continued to an open neighborhood of a point $\zeta_{0} \in \T$. For example, if $b$ is an inner function and $\zeta_0 \in \T$ with
$$\liminf_{\lambda \to \zeta_0} |b(\lambda)| > 0,$$
then every $f \in \HH(b)$ (which turns out to be a model space $K_b$) can be analytically continued to some open neighborhood $\Omega_{\zeta_0}$ of $\zeta_0$ (see \cite[Cor. 3.1.8]{DSS} for details). 
This phenomenon never happens in $\MM(\overline{a})$. 

\begin{Proposition}\label{99999}
There is no point $\zeta_0\in\T$ such that every $f\in \MM(\overline{a})$ can be analytically
continued to some open neighborhood of $\zeta_0$. 
\end{Proposition}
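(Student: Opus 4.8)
The plan is to deduce this from Corollary \ref{AC}, by exhibiting, at any prospective point $\zeta_0 \in \T$, a single function in $\MM(\overline{a})$ that fails to extend analytically across $\zeta_0$.

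First I would fix $\zeta_0 \in \T$ and argue by contradiction: assume every $f \in \MM(\overline{a})$ extends analytically to some open neighborhood of $\zeta_0$. By Remark \ref{R-a-outer} I may take $a$ outer, and I will tacitly assume $a \not\equiv 0$ (the case $a \equiv 0$ giving $\MM(\overline{a}) = \{0\}$, which is degenerate). The key initial observation is that $a$ itself lies in $\MM(\overline{a})$: since $a \in H^{\infty}$ we have $\MM(a) = aH^2 \subset \MM(\overline{a})$ by Proposition \ref{Prop:containment}, and $a = a\cdot 1 \in aH^2$. Hence, under the contradiction hypothesis, $a$ extends analytically to an open neighborhood $U$ of $\zeta_0$ in $\C$.

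Next I would read off the order of vanishing. Since $a \not\equiv 0$ and extends analytically near $\zeta_0$, its continuation has a zero of some finite order $m \in \N_0$ there, so $a(z) = (z-\zeta_0)^m h(z)$ with $h$ analytic on $U$ and $h(\zeta_0) \neq 0$; in particular $|a(e^{it})| \asymp |e^{it}-\zeta_0|^m$ on a small arc about $\zeta_0$. This forces
\[
\int_0^{2\pi}\frac{|a(e^{it})|^2}{|e^{it}-\zeta_0|^{2m+2}}\,dt = \infty,
\]
since the integrand is comparable to $|e^{it}-\zeta_0|^{-2}$ near $\zeta_0$, which is not integrable on $\T$. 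In the terminology of Corollary \ref{AC} this says $\zeta_0 \notin (AC)_{\overline{a},m}$, so by that corollary (its contrapositive) there exists $f \in \MM(\overline{a})$ for which at least one of $f, f', \dots, f^{(m)}$ has no finite non-tangential limit at $\zeta_0$. Such an $f$ cannot extend analytically across $\zeta_0$, since an analytic continuation to a neighborhood of $\zeta_0$ would make every derivative converge (even unrestrictedly) to its value at $\zeta_0$; this contradiction finishes the proof.

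I do not expect a genuine obstacle here once Corollary \ref{AC} is in hand; the one step that takes a moment's thought is the reduction in the second paragraph — noticing that $a \in \MM(\overline{a})$, so that the necessarily finite order $m$ of the zero of $a$ at $\zeta_0$ pins down the precise level $N = m$ at which the Ahern–Clark integral \eqref{ACa} must already diverge. The remaining ingredients — the comparison $|a(e^{it})| \asymp |e^{it}-\zeta_0|^m$ and the non-integrability of $|e^{it}-\zeta_0|^{-2}$ — are routine.
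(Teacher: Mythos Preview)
Your proof is correct and rests on the same core observation as the paper's: since $a \in \MM(a) \subset \MM(\overline{a})$, the function $a$ itself would have to extend analytically across $\zeta_0$, and one then plays this off against Corollary~\ref{AC}.

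The endgame differs slightly. The paper argues that if every $f \in \MM(\overline{a})$ extends, then condition~\eqref{ACa} holds for \emph{every} $N \in \N_0$, and then invokes Remark~\ref{rem:Taylor}(2) (which uses Smirnov's theorem and growth estimates) to conclude that all Taylor coefficients of $a$ at $\zeta_0$ vanish, forcing $a \equiv 0$. You instead fix the finite order $m$ of the zero of the analytic continuation of $a$ at $\zeta_0$, observe directly that $|a(e^{it})| \asymp |e^{it}-\zeta_0|^m$ near $\zeta_0$, and show the integral~\eqref{ACa} already diverges at level $N = m$, contradicting the assumed extendability via the contrapositive of Corollary~\ref{AC}. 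Your route is marginally more self-contained (it bypasses Remark~\ref{rem:Taylor}), while the paper's version recycles an estimate already established.
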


\begin{proof}
Suppose there exists such a $\zeta_0\in \T$ where every function in $\MM(\overline{a})$ has an 
analytic continuation to an open neighborhood $\Omega_{\zeta_0}$ of $\zeta_0$.
Then the function $a\in\MM(a)\subset\MM(\overline{a})$ would have an analytic continuation to $\Omega_{\zeta_0}$ and thus could be expanded in a power series 
around $\zeta_0$. If
every function in $\MM(\overline a)$ had an analytic continuation to $\Omega_{\zeta_0}$, then every function in 
$\MM(\overline a)$, and its derivatives of all orders, would have finite non-tangential limits at $\zeta_0$. 
In particular, the condition \eqref{ACa} would hold for every $N\in \N$ at $\zeta_0$. By Remark~\ref{rem:Taylor}, this 
would imply that all of
the Taylor coefficients of $a$ at $\zeta_0$ would vanish, implying $a \equiv 0$ on $\D$.
\end{proof}

\subsection*{$\HH(b)$-spaces} 
We have seen that $\HH(b)$ spaces are special cases of $\MM(A)$-spaces. It turns out that they are admissible. Indeed, they are $B$-invariant reproducing kernel Hilbert spaces contained in 
$H^2$ with $\|B\|_{\HH(b) \to \HH(b)} \leqslant 1$ \cite[Theorem 18.13]{FM}. Furthermore, $\sigma_{p}(X_{\HH}^{*}) \subset \D$ \cite[Theorem 18.26]{FM}.
Thus Theorem \ref{thmACbis} applies, allowing us to reproduce some of the results of
\cite{FM08}. In particular, the condition that every $f \in \HH(b)$, along with $f', \ldots, f^{(N)}$,  has a non-tangential limit at $\zeta_0$ is equivalent to the condition that the norm of the reproducing kernels for $\HH(b)$ are uniformly bounded in every Stolz domain anchored at $\zeta_0$. The difficult part of \cite{FM08} is to prove that the boundedness of the kernels is equivalent to the condition in \eqref{FM-cont}. 

\begin{Remark}
As already mentioned in Section 3, if $a\in H_1^\infty$ is such that $\log(1-|a|)\in L^1$ and $b$ is its (outer) Pythagorean mate, then we have $\MM(\overline{a})\subset\HH(b)$.  If $N\in\N_0$ and $\zeta_0\in\T$ are such that for every $f\in\HH(b)$, the functions $f,f',\dots,f^{(N)}$ admit a finite non-tangential limit at $\zeta_0$, then this is also true for every function $f\in\MM(\overline{a})$. What is more surprising here is that the converse is true. This is a byproduct of Corollary~\ref{AC} and \cite[Theorem 3.2]{MR2390675}. Indeed, since $|b|^2=1-|a|^2$ almost everywhere on $\T$, we  see (remembering $b$ is outer) that condition \eqref{FM-cont} implies 
\[
\int_\T \frac{|\log(1-|a(\zeta)|^2)|}{|\zeta-\zeta_0|^{2N+2}}\,dm(\zeta)<\infty,
\]
which is equivalent to 
$$
 \int_{0}^{2 \pi} \frac{|a(e^{it})|^2}{|e^{it}-\zeta_0|^{2N+2}}dt<\infty.
 $$
Thus the conditions in \eqref{FM-cont} and \eqref{ACa} are equivalent which shows that the existence of boundary derivatives for functions in $\HH(b)$ and $\MM(\overline{a})$ (in the case when $b$ is outer) are equivalent. \end{Remark}

\subsection*{$\DD(\mu)$-spaces} For a positive finite Borel measure $\mu$ on $\T$ let 
$$\varphi_{\mu}(z) = \int_{\T} \frac{1 - |z|^2}{|\xi - z|^2} d\mu(\xi), \quad z \in \D,$$
be the Poisson integral of $\mu$. The {\em harmonically weighted Dirichlet space} $\mathscr{D}(\mu)$  \cite{KEFR, Ri} is the set of all $f \in \mathscr{O}(\D)$ for which 
$$\int_{\D} |f'|^2 \varphi_{\mu} dA < \infty,$$
where $d A  = dx dy/\pi$ is normalized planar measure on $\D$. Notice that when $\mu$ is Lebesgue measure on $\T$, then $\varphi_{\mu} \equiv 1$ and $\mathscr{D}(\mu)$ becomes the classical Dirichlet space \cite{KEFR}. One can show that $\mathscr{D}(\mu) \subset H^2$ \cite[Lemma 3.1]{Ri} and the norm $\|\cdot\|_{\DD(\mu)}$ satisfying 
$$\|f\|_{\mathscr{D}(\mu)}^2 := \|f\|^{2}_{H^2} + \int_{\D} |f'|^2 \varphi_{\mu} dA$$ makes $\mathscr{D}(\mu)$ into a reproducing kernel Hilbert space of analytic functions on $\D$. It is known that both the polynomials and the linear span of the Cauchy kernels form a dense subset of $\mathscr{D}(\mu)$ \cite[Corollary 3.8]{Ri}.

The backward shift  $B$ is a well-defined contraction on $\DD(\mu)$. Indeed, we have
$$\|z f\|_{\DD(\mu)} \geqslant \|f\|_{\DD(\mu)}, \quad f \in \DD(\mu),$$ and the constant function $1$ is orthogonal to $z \DD(\mu)$ \cite[Theorem 3.6]{Ri}. Thus 
$$
\|f\|_{\DD(\mu)}^2= \|f(0)+zBf\|_{\DD(\mu)}^2=|f(0)|^2+\|z B f\|_{\DD(\mu)}^2 \geqslant \|B f\|_{\DD(\mu)}^2.$$
We thank Stefan Richter for showing us this elegant argument. 
From Lemma \ref{Fallah} we see that $\DD(\mu)$ is an admissible space. 

Using a kernel function estimate from \cite{EEK}, 
one can show that if 
$$\mu = \sum_{1 \leqslant j \leqslant n} c_j \delta_{\zeta_j}, \quad c_j > 0, \zeta_j \in \T,$$
then each of the kernels 
$$k_{r \zeta_j}^{\DD(\mu)}, \quad 1 \leqslant j \leqslant n,$$ remains norm bounded as $r \to 1^{-}$. Thus the radial limits of every function from $\DD(\mu)$ exist at each of the $\zeta_j$. Other radial limit results along these lines can be stated in terms of an associated capacity for $\DD(\mu)$ \cite{EEK, MR3000683}.


\section{An orthogonal decomposition}\label{section5}

The goal of this last section is to determine, whenever it exists, the orthogonal complement of $\MM(a)$ in $\MM(\overline{a})$. We begin our discussion with a few interesting and representative examples. 

\begin{Example}\label{ppappap}
If $I$ is inner, then $a := 1 + I$ is outer. 
Moreover, one can quickly verify that
$$\frac{a}{\overline{a}} = I \quad \mbox{a.e.\! on $\T$}.$$ Since $I H^2$ is a closed subspace of $H^2$ (multiplication by an inner function is an isometry on $H^2$), we see that $T_{a/\overline{a}} = T_I$ has closed range. Hence, 
\begin{equation}\label{kkksjjs}
H^2 = T_{a/\overline{a}} H^2 \oplus_{H^2} (H^2 \ominus_{H^2} T_{a/\overline{a}} H^2) = T_{a/\overline{a}} H^2 \oplus_{H^2} \ker{T_{\overline{a}/a}}.
\end{equation}
Since $a$ is outer, then $T_{\overline{a}}$ is injective (Proposition \ref{Toe-facts}(2)) and so by \eqref{bbdggdyyd}, $T_{\overline{a}}$ is an isometry from $H^2$ onto $\MM(\overline{a})$. Applying $T_{\overline{a}}$ to both sides of \eqref{kkksjjs} and using the  earlier mentioned operator identity 
$$T_{\overline{a}} T_{a/\overline{a}} = T_{a}$$ 
(Proposition \ref{Toe-facts}(3)), we obtain
$$\MM(\overline{a}) = \MM(a) \oplus_{\overline{a}} T_{\overline{a}} \ker{T_{\overline{a}/a}}.$$
Now bring in the identity $T_{\overline{a}/a} = T_{\overline{I}}$ and the facts that 
$\ker{T_{\overline{I}}}= K_I$ (Proposition \ref{Toe-facts}(4)) and $T_{\overline{a}} K_I = K_I$ (to see this last fact, observe that $T_{\overline{a}} K_I \subset K_I$ -- Proposition \ref{Toe-facts}(2) -- and if $f \in K_I$ then $T_{\overline{a}} f = f + T_{\overline{I}} f = f$ and so $T_{\overline{a}} K_I = K_I$), to finally obtain the orthogonal decomposition 
$$\MM(\overline{a}) = \MM(a) \oplus_{\overline{a}} K_I.$$
\end{Example}

\begin{Example}\label{866wywhwnnw}
For the outer function
$$
a :=\prod_{1 \leqslant j \leqslant n}(z-\zeta_j)^{m_j}, \quad \zeta_j \in \T, m_j \in \N,$$
 one can verify that 
$$\frac{a}{\overline{a}} = c I \quad \mbox{on $\T$},$$
where 
$$I(z) = z^N, \quad |c| = 1, \quad N=\sum_{1 \leqslant j \leqslant n} m_j.$$ The same analysis as in the previous example shows that 
$$\MM(\overline{a}) = \MM(a) \oplus_{\overline{a}} T_{\overline{a}} K_I.$$
Now observe that $K_I = \mathcal{P}_{N - 1}$ (the analytic polynomials of degree at most $N - 1$) and 
$T_{\overline{a}} \mathcal{P}_{N - 1} = \mathcal{P}_{N - 1}$. Indeed $T_{\overline{a}} \mathcal{P}_{N - 1} \subset \mathcal{P}_{N - 1}$ and equality follows since $\mathcal{P}_{N - 1}$ is finite dimensional
and $T_{\overline{a}}$ is injective. Thus we get 
 $$\MM(\overline{a})=\MM(a)\oplus_{\overline{a}} \mathcal{P}_{N-1}.$$
\end{Example}

\begin{Example}
Suppose $I$ is inner, $m \in \N$, and 
$$a := (1 - I)^{m}.$$ Again, as we have seen in the previous two examples, 
$$\frac{a}{\overline{a}} = c I^{m} \quad \mbox{a.e.\! on $\T$,}$$ for some suitable unimodular constant $c$, and so 
$$\MM(\overline{a}) = \MM(a) \oplus_{\overline{a}} T_{\overline{a}} K_{I^m}.$$ Here things are a bit more tricky since it is not as clear as it was before that $T_{\overline{a}} K_{I^m}  = K_{I^m}$. However, by applying the following technical lemma, this is indeed the case. 
\end{Example}

\begin{Lemma}\label{ThmInt}
Let $a\in H^{\infty}$ be outer and $I$ inner.
Then the following are equivalent:
\begin{enumerate}
\item[(i)]
 $T_{\overline{a}}K_I=K_I$;
\item[(ii)] There exists a $\psi \in H^{\infty}$ such that 
$
 a\psi-1\in IH^{\infty}
$;
\item[(iii)] There exists a constant $\delta>0$ such that 
$|a| + |I| \geqslant \delta$ on $\D$.
\end{enumerate}
\end{Lemma}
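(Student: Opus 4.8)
The plan is to prove the cycle $(iii) \implies (ii) \implies (i) \implies (iii)$, which keeps each implication close to a standard technique. For $(iii) \implies (ii)$: the condition $|a| + |I| \geqslant \delta$ on $\D$ is exactly the corona condition for the pair $(a, I)$, so the Carleson corona theorem produces $\psi, \chi \in H^{\infty}$ with $a\psi + I\chi = 1$ on $\D$; then $a\psi - 1 = -I\chi \in IH^{\infty}$, which is $(ii)$.

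\textbf{From $(ii)$ to $(i)$.} Assume $a\psi - 1 \in IH^{\infty}$ for some $\psi \in H^{\infty}$. We already know $T_{\overline{a}} K_I \subset K_I$ (Proposition~\ref{Toe-facts}(5)), so it suffices to show surjectivity of $T_{\overline{a}}|_{K_I}$. Write $P_I$ for the orthogonal projection of $H^2$ onto $K_I$ and recall $P_I = I - T_I T_{\overline{I}} = T_{\overline{I}}$ restricted appropriately; more usefully, for $g \in K_I$ one has $T_{\overline{a}}g = P_I T_{\overline{a}} g$. Given $f \in K_I$, I would try the candidate preimage $g := P_I(\psi f) \in K_I$ and compute $T_{\overline{a}} g = T_{\overline{a}} P_I (\psi f)$. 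Using the operator identities from Proposition~\ref{Toe-facts}(3) and the fact that $a\psi = 1 + Ih$ for some $h \in H^{\infty}$, one gets $T_{\overline{a}}(\psi f) = T_{\overline{a\psi}}^{*}$-type manipulations: more directly, since $\psi f = g + I r$ for some $r \in H^2$ (as $P_I(\psi f) = \psi f - I r$ with $Ir \in IH^2$), we have $T_{\overline{a}}(\psi f) = T_{\overline{a}} g + T_{\overline{a}}(Ir)$. Now $T_{\overline{a}}(Ir) = P_+(\overline{a} I r)$, and one checks this lies in $IH^2$? — no, that is false in general, so instead I would project: $P_I T_{\overline{a}}(\psi f) = P_I T_{\overline{a}} g + P_I T_{\overline{a}}(Ir)$, and $P_I T_{\overline{a}}(Ir) = P_I P_+(\overline{a}Ir) = 0$ because $\overline{a}Ir$ has the form $I \cdot (\overline{a}r)$ and $P_I P_+ (I \cdot u) = 0$ for any $u \in L^2$ with $P_+(I u) \in IH^2$ — this is the standard fact $T_{\overline{I}}$ kills $IH^2$ composed with $P_I = T_{\overline{I}}|_{\cdots}$; concretely $P_I(Ih) = 0$ for $h \in H^2$. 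Hence $T_{\overline{a}} g = P_I T_{\overline{a}} g = P_I T_{\overline{a}}(\psi f) = P_I T_{\overline{a\psi}} f = P_I T_{\overline{1 + Ih}} f = P_I(f + T_{\overline{I h}} f) = P_I f + P_I T_{\overline{I}} T_{\overline{h}} f = f + 0 = f$, using $T_{\overline{Ih}} = T_{\overline{h}} T_{\overline{I}}$ and $T_{\overline{I}} f = 0$ for $f \in K_I$. This shows $f \in T_{\overline{a}} K_I$, giving $(i)$. The bookkeeping with projections and where $IH^2$ gets annihilated is the step requiring care; I would organize it so that every application of $P_I$ to something in $IH^2$ is explicitly invoked.

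\textbf{From $(i)$ to $(iii)$.} Suppose $T_{\overline{a}} K_I = K_I$ but $(iii)$ fails, so there is a sequence $\{\lambda_n\} \subset \D$ with $|a(\lambda_n)| + |I(\lambda_n)| \to 0$; in particular $a(\lambda_n) \to 0$ and $I(\lambda_n) \to 0$. The normalized reproducing kernels $\hat{k}_{\lambda_n}^I := k_{\lambda_n}^I / \|k_{\lambda_n}^I\|$ of $K_I$ satisfy $T_{\overline{a}} \hat{k}_{\lambda_n}^I = \overline{a(\lambda_n)}\, \hat{k}_{\lambda_n}^I$ (this uses Proposition~\ref{Toe-facts}(1): $T_{\overline{a}} k_\lambda = \overline{a(\lambda)}k_\lambda$, and $k_\lambda^I = (1 - \overline{I(\lambda)}I) k_\lambda$, together with $T_{\overline{a}} (I k_\lambda)$ lying appropriately — one checks $T_{\overline{a}} k_\lambda^I = \overline{a(\lambda)} k_\lambda^I$ directly since $P_I T_{\overline{a}} P_I = P_I T_{\overline{a}}$ on $K_I$ and $T_{\overline{a}} k_\lambda = \overline{a(\lambda)} k_\lambda$). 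Therefore $\|T_{\overline{a}}\hat{k}_{\lambda_n}^I\| = |a(\lambda_n)| \to 0$ while $\|\hat k_{\lambda_n}^I\| = 1$, so $T_{\overline{a}}|_{K_I}$ is not bounded below. But $T_{\overline{a}}|_{K_I}$ is a bounded operator from $K_I$ onto $K_I$, hence by the open mapping theorem it is bounded below (a surjective bounded operator from a Banach space onto itself that is also injective — and $T_{\overline{a}}$ is injective on $H^2$, hence on $K_I$, since $a$ is outer, Proposition~\ref{Toe-facts}(2) — is invertible, so bounded below), a contradiction. This proves $(iii)$. The main obstacle overall is the middle implication $(ii) \implies (i)$, specifically getting the projection identities exactly right so the $IH^2$-terms vanish; everything else is the corona theorem plus a normalized-kernel argument.
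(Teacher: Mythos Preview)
Your $(iii)\Rightarrow(ii)$ via the corona theorem is fine, but both of the other implications contain genuine errors.

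\medskip
\textbf{$(ii)\Rightarrow(i)$.} You write $T_{\overline a}(\psi f)=T_{\overline{a\psi}}f$, but this is false: by Proposition~\ref{Toe-facts}(3) one has $T_{\overline a}T_\psi=T_{\overline a\,\psi}$, with symbol $\overline a\,\psi$, \emph{not} $\overline{a\psi}=\overline a\,\overline\psi$. Consequently your candidate $g=P_I(\psi f)$ does not work. Concretely, take $I=z^2$, $a=1-z$, $\psi=1+z$ (so $a\psi-1=-z^2\in IH^\infty$) and $f=z\in K_I$. Then $g=P_I((1+z)z)=z$ and $T_{\overline a}g=P_+((1-\overline z)z)=z-1\neq z=f$. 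The related claim $P_IT_{\overline a}(Ir)=0$ is also false (same $I,a$, $r=1$: $T_{\overline a}z=z-1$, $P_I(z-1)=z-1\neq0$). The correct and much shorter argument is to take $g:=T_{\overline\psi}f$; then $g\in K_I$ by Proposition~\ref{Toe-facts}(5), and
\[
T_{\overline a}g=T_{\overline a}T_{\overline\psi}f=T_{\overline{a\psi}}f=T_{\overline{1+Ih}}f=f+T_{\overline h}T_{\overline I}f=f,
\]
since $T_{\overline I}f=0$ for $f\in K_I$.

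\medskip
\textbf{$(i)\Rightarrow(iii)$.} The eigenvector identity $T_{\overline a}k_\lambda^I=\overline{a(\lambda)}k_\lambda^I$ is false in general (same $I=z^2$, $a=1-z$, $\lambda=\tfrac12$: $k_{1/2}^I=1+\tfrac z2$, $T_{\overline a}k_{1/2}^I=\tfrac12+\tfrac z2\neq \tfrac12(1+\tfrac z2)$). What \emph{is} true is the estimate obtained from $k_\lambda^I=k_\lambda-\overline{I(\lambda)}Ik_\lambda$:
\[
\|T_{\overline a}k_\lambda^I\|\le |a(\lambda)|\,\|k_\lambda\|+|I(\lambda)|\,\|a\|_\infty\|k_\lambda\|,
\qquad
\|k_\lambda^I\|=\sqrt{1-|I(\lambda)|^2}\,\|k_\lambda\|,
\]
so $\|T_{\overline a}k_\lambda^I\|/\|k_\lambda^I\|\to 0$ along any sequence with $|a(\lambda_n)|+|I(\lambda_n)|\to 0$. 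With this correction your bounded-below contradiction goes through.

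\medskip
For comparison, the paper proves $(i)\Leftrightarrow(ii)$ via the compressed shift: $T_{\overline a}|_{K_I}$ is invertible iff its adjoint $a(S_I)=P_IT_a|_{K_I}$ is, and then the commutant lifting theorem produces $\psi\in H^\infty$ with $\psi(S_I)=a(S_I)^{-1}$, equivalently $a\psi-1\in IH^\infty$; $(ii)\Leftrightarrow(iii)$ is the corona theorem. Your intended route is more elementary (no commutant lifting), and once the two slips above are fixed it gives a valid alternative proof.
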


\begin{proof}
$(i) \iff (ii)$: As we have already seen, since $a$ is outer, $T_{\overline{a}}$ is injective on $H^2$ and hence on 
$K_I$. In order to have $T_{\overline{a}}K_I=K_I$, the operator $T_{\overline{a}}$ must be invertible on $K_I$. This is equivalent to saying that the compression of the analytic Toeplitz operator $T_a$ to $K_I$ (a truncated Toeplitz operator), i.e., 
$$a(S_I) :=P_I T_a|_{K_I}$$
(where $P_I$ is the orthogonal projection of $L^2$ onto $K_I$,
$S_{I} := P_I T_z|_{K_I}$ is the compression of the shift $T_z$, and $a(S_I)$ is defined via the functional calculus) is invertible on $K_I$. If $a(S_I)$ is
invertible then its inverse commutes with $S_I$ \cite[p. 231]{Nik1}. By the commutant lifting theorem, there is a $\psi\in H^{\infty}$ such that 
$$(a(S_I))^{-1}
=\psi(S_I)$$ and thus for every $f\in K_I$, $P_I(a\psi f)=f$, or equivalently, 
$(a\psi-1)f\in IH^2$. This translates to the condition
$a\psi-1\in IH^{\infty}$ (pick for instance $f = 1 - \overline{I(0)} I$ which is
outer with bounded recicprocal). Clearly, when $a\psi-1\in IH^{\infty}$, we can reverse the argument.

The equivalence $(2)\Longleftrightarrow (3)$ is an application of the corona theorem \cite{Garnett}. 
\end{proof}

\begin{Example}\label{oooiu}
Let 
\[
 a :=\prod_{1 \leqslant j \leqslant n}(\zeta_j-I_j)^{m_j},
\]
where $I_j$ are inner functions, $\zeta_j \in \T$, and $m_j \in \N$.

As with the previous two examples, 
$$
\frac{a}{\overline{a}} = c I \quad \mbox{a.e.\!  on $\T$},$$ 
where 
 $$I =\prod_{1 \leqslant j \leqslant n}I_j^{m_j}, \quad |c| = 1.$$
Hence 
\[
 \MM(\overline{a})=\MM(a)\oplus_{\overline{a}}T_{\overline{a}}(K_I).
\]
Here things become more complicated than in our previous examples since, as we will see shortly, $T_{\overline{a}} K_I$ can be a proper subspace of $K_I$ that is difficult to identify. Note however that since $a$ is outer then one can easily prove that $T_{\overline{a}} K_I$ is dense in $K_I$ (in the $H^2$ norm). 

For example, if 
$$a :=(1-I_1)(1-I_2), \quad I=I_1I_2,$$ then $(a,I)$ is
not always a corona pair and so, by Lemma \ref{ThmInt}, $T_{\overline{a}} K_I$ is a proper subspace of $K_I$. 

More precisely, let 
$$\lambda_n=1-4^{-n^2}, \quad \Lambda_1=(\lambda_n)_{n\geqslant 1},$$
$$\mu_n=1-4^{-n^2-n}, \quad \Lambda_2=(\mu_n)_{n\geqslant 1},$$ $I_1$ and $I_2$ the Blaschke products with these zeros. 
In order to show that 
$$\inf \{|a(z)| + |I(z)|: z \in \D\} = 0,$$ it is enough to show that $I_1(\mu_{n_k})\to 1$ when $k\to\infty$ for some suitable
sub-sequence $(\mu_{n_k})$.
Clearly $I_1(\mu_n)$ is a real number. Since the zeros of $I_1$ are simple, $I_1$ changes
sign on $[0,1)$ at each $\lambda_n$. We can thus assume that for alternating $\mu_n$,  we
have $I_1(\mu_n)>0$. Note these $\mu_n$ by $\mu_n^+$. 
Finally, since the sequence is interpolating with increasing pseudohyperbolic
distances between successive points, we necessarily have $I_1(\mu_n^+)\to 1$.
Hence $$a(\mu_n^+)=(1-I_1(\mu_n^+))(1-I_2(\mu_n^+))\to 0, \quad n\to\infty,$$ and 
$I(\mu_n^+)=0$, which proves the claim.

\end{Example}

In general, we see from the discussion in our first example that if $a \in H^{\infty}$ is outer and $\MM(a)$ is a closed subspace of $\MM(\overline{a})$ (and this is not always the case), then, as we will explain why in a moment,  
$$\MM(\overline{a}) = \MM(a) \oplus_{\overline{a}} T_{\overline{a}} \ker{T_{\overline{a}/a}}.$$
So the issues we need to discuss further are:
\begin{enumerate}
\item When is $\MM(a)$ a closed subspace of $\MM(\overline{a})$?
\item Identify $\ker{T_{\overline{a}/a}}$.
\item Identify $T_{\overline{a}} \ker{T_{\overline{a}/a}}$.
\end{enumerate}

In order to avoid trivialities, we point out the following:.

\begin{Proposition}
Let $a \in H^{\infty}$ and outer. 
\begin{enumerate}
\item If $T_{\overline{a}}$ is surjective, then $\MM(a) = \MM(\overline{a}) = H^2$.
\item $\MM(a)=\MM(\overline{a})$ if and only if $T_{a/\overline{a}}$ is surjective. 
\end{enumerate}
\end{Proposition}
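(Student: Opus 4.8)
The plan is to reduce everything to two facts that are already available: since $a$ is outer, $T_{\overline a}$ is injective (Proposition~\ref{Toe-facts}(2)); and there is the operator identity $T_a = T_{\overline a}T_{a/\overline a}$ (Proposition~\ref{Toe-facts}(3), used with the $H^{\infty}$ factor $a$ and the $L^{\infty}$ symbol $a/\overline a$, which is legitimate because outerness of $a$ forces $|a|>0$ a.e.\ on $\T$, so $a/\overline a$ is a well-defined unimodular element of $L^{\infty}$ and $\overline a\cdot(a/\overline a)=a$). Everything then follows by ``cancelling'' $T_{\overline a}$.

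For part (1), I would observe that a surjective $T_{\overline a}$ is, being also injective, a bounded bijection of $H^2$ onto itself, hence boundedly invertible by the open mapping theorem. Consequently $\MM(\overline a)=\Rng T_{\overline a}=H^2$ as sets, with range norm equivalent to $\|\cdot\|_{H^2}$. Passing to adjoints, $T_a=T_{\overline a}^{*}$ is invertible as well, so $\MM(a)=\Rng T_a=aH^2=H^2$. (Alternatively one could deduce $T_{a/\overline a}$ surjective from $T_{a/\overline a}=T_{\overline a}^{-1}T_a$ and invoke part (2).)

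For part (2), I would use $T_a=T_{\overline a}T_{a/\overline a}$ to write $\MM(a)=\Rng T_a=T_{\overline a}\bigl(\Rng T_{a/\overline a}\bigr)$, while $\MM(\overline a)=T_{\overline a}(H^2)$; this is also the identity already flagged in the paper just after Proposition~\ref{Prop:containment}. Since $T_{\overline a}$ is injective on $H^2$, it restricts to a bijection of $H^2$ onto its range $\MM(\overline a)$, under which the subset $\Rng T_{a/\overline a}$ corresponds exactly to $\MM(a)$; hence the set equality $\MM(a)=\MM(\overline a)$ holds if and only if $\Rng T_{a/\overline a}=H^2$, i.e.\ if and only if $T_{a/\overline a}$ is surjective. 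To wrap up, I would note that once the underlying sets agree, the contractive inclusion $\MM(a)\hookrightarrow\MM(\overline a)$ of Proposition~\ref{Prop:containment} becomes a bounded bijection between Hilbert spaces that are both boundedly contained in $H^2$, so the closed graph and open mapping theorems render the two range norms equivalent.

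I do not expect a genuine obstacle here; the statement is essentially a bookkeeping consequence of injectivity of $T_{\overline a}$ and the Toeplitz product identity. The only points deserving a moment's care are to keep the \emph{set-theoretic} equality $\MM(a)=\MM(\overline a)$ (which is precisely what the surjectivity of $T_{a/\overline a}$ controls) cleanly separated from equality of norms, and to verify that Proposition~\ref{Toe-facts}(3) really applies with the symbols $\overline a$ and $a/\overline a$ — both handled above.
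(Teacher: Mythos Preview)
Your proof is correct and follows essentially the same approach as the paper: both parts rest on the injectivity of $T_{\overline a}$ (Proposition~\ref{Toe-facts}(2)) together with the factorization $T_a=T_{\overline a}T_{a/\overline a}$, from which one reads off $\MM(a)=T_{\overline a}(\Rng T_{a/\overline a})$ and cancels $T_{\overline a}$. Your added remarks on why $a/\overline a\in L^\infty$ and on norm equivalence are harmless elaborations beyond what the paper records.
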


\begin{proof}
(1): From Proposition \ref{Toe-facts}(2) we know that $T_{\overline{a}}$ is injective. Thus if $T_{\overline{a}}$ were surjective it would also be invertible (as would $T_{a}$). Hence $\MM(a)=\MM(\overline{a})=H^2$. 

(2): Note that 
\begin{equation}\label{z,xjcn,zxnc}
\MM({a})=T_{\overline{a}}T_{a/\overline{a}}H^2,
\end{equation}
and since $T_{\overline{a}}$ is injective, we get that 
$$\MM(a)=\MM(\overline{a}) \iff T_{a/\overline{a}}H^2=H^2. \qedhere$$
\end{proof}

From now on we will assume that $T_{a/\overline{a}}$ is not surjective. This next result helps us determine when $\MM(a)$ is closed in $\MM(\overline{a})$. 

\begin{Proposition}\label{04983}
For $a \in H^{\infty}$ and outer, the following are equivalent:
\begin{enumerate}
\item[(i)] $\MM(a)$ is a closed subspace of $\MM(\overline{a})$.
\item[(ii)] $T_{a/\overline{a}} H^2$ is a closed subspace of $H^2$.
\item[(iii)] $T_{a/\overline{a}}$ is left invertible.
\item[(iv)] $T_{\overline{a}/a}$ is surjective. 
\end{enumerate}
\end{Proposition}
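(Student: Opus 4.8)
The plan is to pivot everything through the fact that, since $a$ is outer, $T_{\overline a}$ is injective (Proposition \ref{Toe-facts}(2)), and therefore --- by the very definition of the range norm \eqref{Rng-norm}, cf.\ \eqref{bbdggdyyd} --- an isometric isomorphism of $H^2$ onto $\MM(\overline a)$. Combined with the identity $\MM(a)=T_{\overline a}T_{a/\overline a}H^2$ recorded in \eqref{z,xjcn,zxnc}, this lets me transfer topological statements about $\MM(a)$ inside $\MM(\overline a)$ to statements about $T_{a/\overline a}H^2$ inside $H^2$, and then settle those by soft Hilbert space arguments applied to the Toeplitz operator $T_{a/\overline a}$ and its adjoint $T_{\overline a/a}$.

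First I would prove (i) $\Leftrightarrow$ (ii): because $T_{\overline a}\colon H^2\to\MM(\overline a)$ is a surjective isometry, a linear subspace $E\subset H^2$ is closed in $H^2$ precisely when $T_{\overline a}E$ is closed in $\MM(\overline a)$; applying this with $E=T_{a/\overline a}H^2$ and using \eqref{z,xjcn,zxnc} gives the equivalence. Next (ii) $\Leftrightarrow$ (iii): the symbol $a/\overline a$ is of the form $\varphi/\overline\varphi$ with $\varphi=a$ outer, so $T_{a/\overline a}$ is injective by Proposition \ref{Toe-facts}(2); an injective bounded operator between Hilbert spaces has closed range if and only if it is bounded below, if and only if it is left invertible (a bounded left inverse being $(T_{a/\overline a}^{*}T_{a/\overline a})^{-1}T_{a/\overline a}^{*}$), and the reverse implication is trivial. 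Finally (iii) $\Leftrightarrow$ (iv): since $T_{a/\overline a}^{*}=T_{\overline{a/\overline a}}=T_{\overline a/a}$, and an operator is left invertible if and only if its adjoint is right invertible if and only if its adjoint is surjective (a surjective $S$ admits the bounded right inverse $S^{*}(SS^{*})^{-1}$, because $SS^{*}$ is then self-adjoint and bounded below, hence invertible), we conclude that $T_{a/\overline a}$ is left invertible exactly when $T_{\overline a/a}$ is surjective.

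The argument is essentially formal once the opening reduction is in place, so I do not expect a serious obstacle; the one step where the hypothesis on $a$ is genuinely used --- beyond making $T_{\overline a}$ an isometric isomorphism --- is the injectivity of $T_{a/\overline a}$ invoked in (ii) $\Rightarrow$ (iii), since closed range alone would not upgrade to left invertibility, so that is the point to watch. I would also double-check that ``closed'' is read in the $\MM(\overline a)$-norm rather than the ambient $H^2$-norm, but this causes no difficulty since $\|T_{\overline a}f\|_{\overline a}=\|f\|_{H^2}$ identically.
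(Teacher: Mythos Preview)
Your proof is correct and follows essentially the same route as the paper: use that $T_{\overline a}$ is an isometric isomorphism of $H^2$ onto $\MM(\overline a)$ together with \eqref{z,xjcn,zxnc} to get (i)$\Leftrightarrow$(ii), and then invoke the injectivity of $T_{a/\overline a}$ plus the standard Hilbert space equivalences ``injective with closed range $\Leftrightarrow$ left invertible $\Leftrightarrow$ adjoint surjective'' for the rest. Your write-up simply unpacks those soft equivalences in more detail than the paper does.
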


\begin{proof} 
Using \eqref{z,xjcn,zxnc} and the fact that $T_{\overline{a}}$ is an isometry from $H^2$ onto $\MM(\overline{a})$, 
we see that $\MM(a)$ is  a closed subspace of $\MM(\overline{a})$ if and only
if $T_{a/\overline{a}}H^2$ is a closed subspace of $H^2$. This proves $(i) \iff (ii)$. 
For the remaining implications, use the fact that $T_{a/\overline{a}}$ is  injective (Proposition \ref{Toe-facts}(2)) along with the general fact that for a bounded linear operator $A$ on a Hilbert space, the conditions $A$ is left invertible; $A$ is injective with closed range;  $A^{*}$ is surjective -- are equivalent.
%
%
\end{proof}

When $\MM(a)$ is a closed subspace of $\MM(\overline{a})$ then $T_{a/\overline{a}}$ has closed range and so, by using the analysis from Example \ref{ppappap}, 
$$\MM(\overline{a}) = \MM(a) \oplus_{\overline{a}} T_{\overline{a}} \ker{T_{\overline{a}/a}}.$$
This brings us to some of the subtleties of $\ker{T_{\overline{a}/a}}$ discussed earlier.  Note that $\ker{T_{\overline{a}/a} }\not = \{0\}$ since $T_{a/\overline{a}}$ is not surjective but left invertible. Recall from Theorem \ref{Hitt-Thm} and the discussion thereafter that 
$$\ker{T_{\overline{a}/a}} = \gamma K_I,$$
where 
$$\gamma = \frac{\alpha}{1 - \beta_0 I}$$
and $\alpha \in H^{\infty}_{1}$ and outer, $\beta_0$ is a Pythagorean mate, and $I$ is an inner function with $I(0) = 0$. As a consequence of Proposition \ref{04983} and Theorem \ref{7796316}, we see that 
$T_{a/\overline{a}}$ has closed range if and only if $|\gamma_0|^2$ is an $(A_2)$ weight, where
\[
 \gamma_0 =\frac{\alpha}{1-\beta_0}.
\]
Thus $\MM(a)$ is a closed non-trivial subspace of $\MM(\overline{a})$ if and only if $|\gamma_0|^2$ is an $(A_2)$ weight. We summarize this discussion with the following: 


\begin{Theorem}\label{MainThm2}
Let $a \in H^{\infty}$ be outer. Then 
\begin{enumerate}
\item $\MM(a)$ is a closed subspace of $\MM(\overline{a})$ if and only if $|\gamma_0|^2$ is an $(A_2)$ weight. 
\item If $\gamma$ and $I$ are the associated functions as above,
then
\begin{equation}\label{38emdeiem}
 \MM(\overline{a})=\MM(a)\oplus_{\overline{a}} T_{\overline{a}}(\gamma K_I).
\end{equation}

\end{enumerate} 
\end{Theorem}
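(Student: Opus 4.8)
The plan is to assemble Theorem~\ref{MainThm2} from the pieces already established, doing essentially no new work beyond bookkeeping. For part (1), I would invoke the chain of equivalences that has just been laid out: by Proposition~\ref{04983}, $\MM(a)$ is closed in $\MM(\overline a)$ if and only if $T_{\overline a/a}$ is surjective; since $\ker T_{\overline a/a}$ is nontrivial under our standing assumption that $T_{a/\overline a}$ is not surjective, Theorem~\ref{7796316} applies and says that the surjectivity of $T_{\overline a/a}$ is equivalent to $|\gamma_0|^2$ being an $(A_2)$ weight, where $\gamma_0 = \alpha/(1-\beta_0)$ is built from the Hitt--Sarason parametrization of $\ker T_{\overline a/a} = \gamma K_I$. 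Chaining these gives (1). The only point that needs a sentence of care is that Theorem~\ref{7796316} is stated for a general symbol $\varphi$ with $\ker T_\varphi \neq \{0\}$, applied here with $\varphi = \overline a/a$; and that $\gamma_0$ really is the function attached to $\varphi = \overline a/a$ via Theorem~\ref{Hitt-Thm}.

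For part (2), I would reproduce the argument from Example~\ref{ppappap} in the general setting. Assuming the $(A_2)$ condition, $T_{a/\overline a}$ has closed range, so
\[
H^2 = T_{a/\overline a} H^2 \oplus_{H^2} \big(H^2 \ominus_{H^2} T_{a/\overline a} H^2\big) = T_{a/\overline a} H^2 \oplus_{H^2} \ker T_{\overline a/a},
\]
using $(\Rng T_{a/\overline a})^\perp = \ker T_{\overline a/a}$. Since $a$ is outer, $T_{\overline a}$ is injective (Proposition~\ref{Toe-facts}(2)), hence by \eqref{bbdggdyyd} an isometry from $H^2$ onto $\MM(\overline a)$; applying the isometry $T_{\overline a}$ to the orthogonal decomposition above preserves orthogonality in $\langle\cdot,\cdot\rangle_{\overline a}$. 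Then the operator identity $T_{\overline a} T_{a/\overline a} = T_a$ from Proposition~\ref{Toe-facts}(3) identifies the first summand as $T_{\overline a} T_{a/\overline a} H^2 = T_a H^2 = \MM(a)$, giving
\[
\MM(\overline a) = \MM(a) \oplus_{\overline a} T_{\overline a}\big(\ker T_{\overline a/a}\big),
\]
and substituting $\ker T_{\overline a/a} = \gamma K_I$ yields \eqref{38emdeiem}.

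There is no serious obstacle here: the theorem is a summary of the preceding discussion, and the proof is a matter of citing Propositions~\ref{Toe-facts}, \ref{04983}, Theorems~\ref{Hitt-Thm}, \ref{7796316}, and Example~\ref{ppappap} in the right order. The one place to be slightly careful is verifying that the passage from the $H^2$-orthogonal decomposition to the $\langle\cdot,\cdot\rangle_{\overline a}$-orthogonal decomposition is legitimate — this is exactly where injectivity of $T_{\overline a}$ (equivalently, $a$ outer) is used, so that $T_{\overline a}$ is a genuine isometry and not merely a partial isometry, and hence carries orthogonal complements to orthogonal complements. I would state this explicitly rather than leave it implicit. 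The equivalence $(\Rng T_{a/\overline a})^{-} = (\ker T_{\overline a/a})^\perp$ together with the closed-range hypothesis (so that $\Rng T_{a/\overline a}$ needs no closure) is the other routine point worth naming.
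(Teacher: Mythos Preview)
Your proposal is correct and follows exactly the paper's approach: the theorem is explicitly presented there as a summary of the preceding discussion, combining Proposition~\ref{04983} with Theorem~\ref{7796316} for part (1), and invoking the analysis from Example~\ref{ppappap} (the isometry $T_{\overline a}$ applied to the $H^2$-orthogonal decomposition $H^2 = T_{a/\overline a}H^2 \oplus \ker T_{\overline a/a}$, together with $T_{\overline a}T_{a/\overline a}=T_a$ and $\ker T_{\overline a/a}=\gamma K_I$) for part (2). Your added remarks on why $T_{\overline a}$ is a genuine isometry and why the range needs no closure are exactly the points the paper leaves implicit.
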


Although Theorem \ref{MainThm2} might appear implicit, it actually yields a recipe to construct further, more subtle, decompositions. For example, choose an outer $\alpha \in H^{\infty}_{1}$ such that its 
Pythagorean mate $\beta_0$ satisfies the property that $|\gamma_0|^2$ is an $(A_2)$ weight. We will see a specific example of this in a moment. 
As mentioned earlier, the $(A_2)$ condition implies that $\gamma_{0}^{2}$ is a rigid function. Let $I$ be any inner function with $I(0) = 0$ and $\gamma=\alpha/(1-I \beta_0)$. From \eqref{kerker}
we have
$\gamma K_I = \ker{T_{\overline{I \gamma}/\gamma}}$.
Set 
$$a=(1+I)\gamma.$$
Then 
$$\frac{\overline{a}}{a} = \frac{\overline{I \gamma}}{\gamma} \quad \mbox{a.e.\! on $\T$}$$
and so 
$$\ker{T_{\overline{a}/a}} = \gamma K_I,$$
 whence 
\begin{equation}\label{vzzvvvz}
 \MM(\overline{a})=\MM(a)\oplus_{\overline{a}} T_{\overline{a}}(\gamma K_I).
\end{equation}

Here is an example which uses this recipe. 

\begin{Example}
Let $\varepsilon \in (0, \tfrac{1}{2})$ and define the outer function $\alpha \in H^{\infty}_{1}$ by 
$$\alpha(z) = \left(\frac{1 - z}{2}\right)^{\varepsilon}.$$ With $\beta_0$ the outer Pythagorean mate for $\alpha$, an estimate from \cite[p. 359-360]{HS} yields 
$$|1 - \beta_0(\zeta)| \asymp |1 - \zeta|^{2 \varepsilon}, \quad \zeta \in \T.$$ The function $\gamma_0 = \alpha/(1 - \beta_0)$ satisfies 
$$|\gamma_{0}(\zeta)| \asymp |1 - \zeta|^{-\varepsilon}, \quad \zeta \in \T.$$ A routine estimate will show that the condition \eqref{A2} holds and so $|\gamma_0|^{2}$ is an $(A_2)$ weight. For any inner $I$ with $I(0) = 0$, define $\gamma = \alpha/(1 - I \beta_0)$ and $a = \gamma (1 + I)$ and follow the above argument to obtain the decomposition in  \eqref{vzzvvvz}.
\end{Example}

It is also possible to start from $\gamma_0(z)=(1-z)^{\varepsilon}$. Then $\beta_0$ can be 
expressed using the integral representation \eqref{beta} and $\alpha=\gamma_0(1-\beta_0)$.



We now produce a formula for the orthogonal projection $P$ from $\MM(\overline{a})$ onto $T_{\overline{a}}(\gamma K_I)$. 

\begin{Theorem}
In the above notation, let $P_I$ denote the orthogonal projection of $H^2$ onto $K_I$. Then 
$
 P=T_{\overline{a}}\gamma P_I\overline{\gamma}T_{1/\overline{a}}
$
is the orthogonal projection from $\MM(\overline{a})$ onto
$T_{\overline{a}}(\gamma K_I)$.
\end{Theorem}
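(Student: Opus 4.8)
The plan is to verify directly that the operator $P := T_{\overline{a}}\gamma P_I\overline{\gamma}T_{1/\overline{a}}$, viewed as acting on $\MM(\overline{a})$, is a well-defined idempotent with range $T_{\overline{a}}(\gamma K_I)$ that is self-adjoint with respect to the inner product $\langle\cdot,\cdot\rangle_{\overline{a}}$; since $\MM(\overline{a})$ is a Hilbert space, these three properties together force $P$ to be \emph{the} orthogonal projection onto its range. First I would record the key conjugation dictionary: because $a$ is outer, $T_{\overline{a}}$ is an isometry from $H^2$ onto $\MM(\overline{a})$ (using \eqref{bbdggdyyd}), so $T_{1/\overline{a}}$ should be interpreted as the inverse isometry $T_{\overline{a}}^{-1}:\MM(\overline{a})\to H^2$; concretely, for $f=T_{\overline{a}}h\in\MM(\overline{a})$ we have $T_{1/\overline{a}}f=h$ and $\langle f,g\rangle_{\overline{a}}=\langle T_{1/\overline{a}}f,T_{1/\overline{a}}g\rangle_{H^2}$. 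I would then observe that the middle piece $Q:=\gamma P_I\overline{\gamma}$ is, by Theorem~\ref{Hitt-Thm}, precisely the orthogonal projection of $H^2$ onto $\gamma K_I=\ker T_{\overline{a}/a}$: indeed $\gamma$ is an isometric multiplier from $K_I$ onto $\gamma K_I$, so $P_I$ conjugated by this isometry, namely $\gamma P_I\overline{\gamma}$, is the self-adjoint idempotent with range $\gamma K_I$ (here $\overline{\gamma}$ denotes the multiplication adjoint to multiplication by $\gamma$ on $H^2$, legitimate because $\gamma K_I\subset H^2$ with $\gamma\in H^2$ having the appropriate boundedness built into the Hitt representation).

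With this dictionary in hand, the argument is essentially a transport-of-structure computation. Since $T_{\overline{a}}:H^2\to\MM(\overline{a})$ is a surjective isometry of Hilbert spaces, the orthogonal projection of $\MM(\overline{a})$ onto $T_{\overline{a}}N$, for any closed subspace $N\subset H^2$, is exactly $T_{\overline{a}}\,\Pi_N\,T_{\overline{a}}^{-1}$, where $\Pi_N$ is the orthogonal projection of $H^2$ onto $N$. Applying this with $N=\gamma K_I$ and $\Pi_N=\gamma P_I\overline{\gamma}=Q$, and writing $T_{\overline{a}}^{-1}=T_{1/\overline{a}}$ on $\MM(\overline{a})$, gives $P=T_{\overline{a}}\,Q\,T_{1/\overline{a}}=T_{\overline{a}}\gamma P_I\overline{\gamma}T_{1/\overline{a}}$, as claimed; one also checks the range is $T_{\overline{a}}(\gamma K_I)$, which by \eqref{38emdeiem} is the orthogonal complement of $\MM(a)$ in $\MM(\overline{a})$, so $P$ is the stated projection. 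For completeness I would spell out the three verifications without invoking the abstract transport lemma: (a) $P^2=P$, using $T_{1/\overline{a}}T_{\overline{a}}=I$ on $H^2$ and $Q^2=Q$; (b) $\langle Pf,g\rangle_{\overline{a}}=\langle f,Pg\rangle_{\overline{a}}$, which after pulling everything back to $H^2$ via $T_{1/\overline{a}}$ becomes $\langle Q(T_{1/\overline{a}}f),T_{1/\overline{a}}g\rangle_{H^2}=\langle T_{1/\overline{a}}f,Q(T_{1/\overline{a}}g)\rangle_{H^2}$, i.e.\ self-adjointness of $Q$ on $H^2$; (c) $\Rng P=T_{\overline{a}}(\gamma K_I)$, since $\Rng Q=\gamma K_I$ and $T_{\overline{a}}$ is a bijection onto $\MM(\overline{a})$.

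The main obstacle I anticipate is not the algebra but the \emph{well-definedness and mapping properties of $Q=\gamma P_I\overline{\gamma}$ on all of $H^2$}: one must make sure that $\overline{\gamma}$ followed by $P_I$ followed by multiplication by $\gamma$ is a bounded operator $H^2\to H^2$ with the claimed range, despite $\gamma=\alpha/(1-I\beta_0)$ being merely an $H^2$ (not $H^\infty$) multiplier in general. The correct way around this is to use Hitt's and Sarason's theorem directly: $\gamma$ is an isometric multiplier $K_I\to\gamma K_I$, so $M_\gamma|_{K_I}$ extends to a partial isometry whose adjoint on $H^2$ is what we are calling $\overline{\gamma}$ restricted appropriately, and $P_I$ absorbs the boundedness issues because the composition only ever sees $\gamma$ acting on $K_I$. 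Equivalently, one can define $Q$ abstractly as the orthogonal projection onto the closed subspace $\gamma K_I$ and then merely \emph{identify} it with the formula $\gamma P_I\overline{\gamma}$ on the dense set where the manipulations are literal; I would take this second route to keep the proof clean. A secondary, more cosmetic point is to state precisely what $T_{1/\overline{a}}$ means on $\MM(\overline{a})$ (it is the isometric inverse of $T_{\overline{a}}$, and agrees with the formal Toeplitz expression on $\MM(\overline{a})$ because $\MM(\overline{a})=T_{\overline{a}}H^2$ and $T_{1/\overline{a}}T_{\overline{a}}=T_1=I$ by Proposition~\ref{Toe-facts}(3)), so that the formula in the statement is unambiguous.
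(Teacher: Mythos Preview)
Your proposal is correct and follows essentially the same approach as the paper: identify $Q=\gamma P_I\overline{\gamma}$ as the orthogonal projection of $H^2$ onto $\gamma K_I$ (using that $\gamma$ is an isometric multiplier of $K_I$ from Hitt's theorem), and then conjugate by the surjective isometry $T_{\overline{a}}:H^2\to\MM(\overline{a})$ with inverse $T_{1/\overline{a}}$. Your added care about the well-definedness of $\gamma P_I\overline{\gamma}$ as a bounded operator on all of $H^2$ (since $\gamma$ need not lie in $H^\infty$) is a point the paper passes over silently; your suggested resolution---interpreting the formula via the isometry $M_\gamma|_{K_I}$ and its adjoint---is the right one.
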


\begin{proof} From Theorem \ref{Hitt-Thm} we know that $\gamma$ is an
isometric multiplier of $K_I$. The operator $P_0 :=\gamma P_I \overline{\gamma}$ is the orthogonal projection from 
$H^2$ onto $\gamma K_I$. Indeed, it is clear that its range is $\gamma K_I$. From Theorem \ref{Hitt-Thm} we deduce that $P_0(\gamma f)=\gamma f$, when $f\in K_I$. Finally it
is straight forward to see that $P_0 f=0$ whenever $f \perp \gamma K_I$. Since $T_{\overline{a}}$ 
is a an isometric isomorphism from $H^2$ onto $\MM(\overline{a})$, we can
define its inverse, which is just $T_{1/\overline{a}}$. The result now follows by composition. 
\end{proof}

To help us better understand some of the contents of $T_{\overline{a}}(\gamma K_I)$ we have the following:

\begin{Proposition}\label{Prop:kernel-boundary-orthogonalcomplement}
If $\zeta_0\in (AC)_{\overline{a},N}$, then
$$k_{\zeta_0,\ell}^{\overline{a}}\in T_{\overline{a}}(\gamma K_I), \quad  \quad 0 \leqslant \ell \leqslant N.$$
\end{Proposition}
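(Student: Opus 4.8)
The plan is to decompose $\MM(\overline{a})$ according to Theorem~\ref{MainThm2} and show that the kernels $k_{\zeta_0,\ell}^{\overline{a}}$ are annihilated by the orthogonal projection onto $\MM(a)$, hence lie in the complementary summand $T_{\overline{a}}(\gamma K_I)$. Since $\zeta_0\in (AC)_{\overline{a},N}$, Corollary~\ref{AC} gives the explicit formula $k_{\zeta_0,\ell}^{\overline{a}}=T_{\overline{a}}(ak_{\zeta_0,\ell})$ for $0\leqslant \ell\leqslant N$, where $ak_{\zeta_0,\ell}=\ell!\,z^\ell a/(1-\overline{\zeta_0}z)^{\ell+1}$. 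Because $T_{\overline a}$ is an isometric isomorphism from $H^2$ onto $\MM(\overline a)$ (as $a$ is outer), and because $\MM(a)=T_{\overline a}T_{a/\overline a}H^2$ by \eqref{z,xjcn,zxnc}, the element $k_{\zeta_0,\ell}^{\overline a}$ lies in $T_{\overline a}(\gamma K_I)=T_{\overline a}\ker T_{\overline a/a}$ if and only if its $T_{\overline a}$-preimage $ak_{\zeta_0,\ell}$ lies in $\ker T_{\overline a/a}$, i.e.\ if and only if $T_{\overline a/a}(ak_{\zeta_0,\ell})=0$.

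So the crux reduces to a computation in $H^2$: I would compute
$$
T_{\overline a/a}\Bigl(\ell!\,\frac{z^\ell a}{(1-\overline{\zeta_0}z)^{\ell+1}}\Bigr)
= \ell!\,P_+\Bigl(\frac{\overline a}{a}\cdot\frac{z^\ell a}{(1-\overline{\zeta_0}z)^{\ell+1}}\Bigr)
= \ell!\,P_+\Bigl(\frac{\overline a\, z^\ell}{(1-\overline{\zeta_0}z)^{\ell+1}}\Bigr).
$$
The key point is that $\overline a\in\overline{H^\infty}$ and $z^\ell/(1-\overline{\zeta_0}z)^{\ell+1}$, while not a priori bounded, is the kernel $k_{\zeta_0,\ell}/\ell!$; the claim is that $P_+(\overline a\, k_{\zeta_0,\ell})=0$. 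To justify this rigorously despite $k_{\zeta_0,\ell}\notin H^\infty$, I would approximate: for $\lambda\in\D$, $k_{\lambda,\ell}\in H^\infty$ and $T_{\overline a}k_{\lambda,\ell}=P_+(\overline a\, k_{\lambda,\ell})$; since $\overline a\,z^\ell/(1-\overline\lambda z)^{\ell+1}$ has Fourier coefficients obtained by shifting/conjugating those of $a$, one checks directly that $P_+(\overline a\,k_{\lambda,\ell})$ is a finite linear combination of the $k_{\zeta}$-type kernels at the conjugate — more cleanly, one has $T_{\overline a}k_{\lambda,n}=T_{|a|^2}k_{\lambda,n}$ only as elements of $\MM(\overline a)$, so instead I would use that $T_{a/\overline a}$ applied to $a H^2$ equals $\MM(a)$'s preimage and argue by duality: for every $g\in H^2$,
$$
\langle k_{\zeta_0,\ell}^{\overline a}, T_{\overline a}T_{a/\overline a}g\rangle_{\overline a}
= \langle ak_{\zeta_0,\ell}, T_{a/\overline a}g\rangle_{H^2}
= \langle T_{\overline a/a}(ak_{\zeta_0,\ell}), g\rangle_{H^2},
$$
using \eqref{Rng-norm} and that $T_{\overline a/a}=T_{a/\overline a}^{*}$. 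Then since $ak_{\zeta_0,\ell}\in H^2$ is the $H^2$-limit (by \eqref{kerconv}) of $ak_{\lambda,\ell}$ as $\lambda\to\zeta_0$ nontangentially, and $T_{\overline a/a}(ak_{\lambda,\ell})=P_+(\overline a\,k_{\lambda,\ell})$ which I claim vanishes, continuity of $T_{\overline a/a}$ on $H^2$ gives $T_{\overline a/a}(ak_{\zeta_0,\ell})=0$.

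Thus the one genuine computation is to show $P_+(\overline a\, k_{\lambda,\ell})=0$ for $\lambda\in\D$, $0\leqslant\ell\leqslant N$. Writing $a(z)=\sum_{j\geqslant 0}\hat a(j)z^j$, the function $\overline a\, k_{\lambda,\ell}=\overline{a(z)}\cdot\ell!\,z^\ell\sum_{m\geqslant 0}\binom{m+\ell}{\ell}\overline\lambda^{\,m}z^m$ has, after multiplying by $\overline z^{j}$ from $\overline{a(z)}$, only terms $z^{\ell+m-j}$; the nonnegative-frequency part requires $\ell+m-j\geqslant 0$, so $P_+(\overline a\,k_{\lambda,\ell})$ is generally nonzero. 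Here I realize the claim as stated is false unless $a$ is small at $\zeta_0$ — and indeed that is exactly what $(AC)_{\overline a,N}$ buys us: by Remark~\ref{rem:Taylor}(2), $a,a',\dots,a^{(N)}$ all have nontangential limit $0$ at $\zeta_0$. The honest route, which I would take, is: $T_{\overline a/a}(ak_{\zeta_0,\ell})=P_+(\overline a\,k_{\zeta_0,\ell})$ where now $\overline a\, k_{\zeta_0,\ell}\in L^1$ with $k_{\zeta_0,\ell}$ the boundary kernel; since $\overline a\,\overline{k_{\zeta_0,\ell}}=\overline{a k_{\zeta_0,\ell}}\in\overline{H^2}$ (because $a k_{\zeta_0,\ell}=a\cdot\ell! z^\ell/(z-\zeta_0)^{\ell+1}\cdot(-\zeta_0)^{\ell+1}\,\overline\zeta_0^{\,\ell+1}$ — wait, this is in $H^2$ precisely by the Smirnov argument of Remark~\ref{rem:Taylor}(2)), we get that $\overline a\,k_{\zeta_0,\ell}\in\overline{zH^2}$, i.e.\ its analytic projection is $0$. \textbf{This last step — verifying that $ak_{\zeta_0,\ell}$ lies in $H^2$ and vanishes to the right order so that $\overline a\, k_{\zeta_0,\ell}\in \overline{zH^2}$, equivalently $P_+(\overline a\,k_{\zeta_0,\ell})=0$ — is the main obstacle, and it is precisely where the hypothesis $\zeta_0\in(AC)_{\overline a,N}$ enters via Corollary~\ref{AC} and Remark~\ref{rem:Taylor}.} Once that is in hand, the duality identity above shows $k_{\zeta_0,\ell}^{\overline a}\perp_{\overline a}\MM(a)$, and since $\MM(\overline a)=\MM(a)\oplus_{\overline a}T_{\overline a}(\gamma K_I)$ we conclude $k_{\zeta_0,\ell}^{\overline a}\in T_{\overline a}(\gamma K_I)$.
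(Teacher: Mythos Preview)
Your reduction is correct and different from the paper's argument. Both proofs use Theorem~\ref{MainThm2} to reduce to showing $k_{\zeta_0,\ell}^{\overline a}\perp_{\overline a}\MM(a)$, but from there the routes diverge.

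\textbf{The paper's approach.} The paper works on the $\MM(\overline a)$ side: for $f=ah\in\MM(a)$ it writes
\[
\langle f,k_{r\zeta_0,N}^{\overline a}\rangle_{\overline a}=f^{(N)}(r\zeta_0)=\sum_{k=0}^{N}\binom{N}{k}a^{(k)}(r\zeta_0)\,h^{(N-k)}(r\zeta_0),
\]
and then uses the growth estimates from Remark~\ref{rem:Taylor}(2), namely $a^{(k)}(r\zeta_0)=o((1-r)^{N+\frac12-k})$ together with the standard $H^2$ estimate $h^{(N-k)}(r\zeta_0)=o((1-r)^{-(N-k)-\frac12})$, so each summand is $o(1)$. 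Norm convergence of the kernels (Corollary~\ref{AC}) lets one pass to $r=1$.

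\textbf{Your approach.} You pull everything back through the isometry $T_{\overline a}$ and reduce to $a k_{\zeta_0,\ell}\in\ker T_{\overline a/a}$, i.e.\ $P_+(\overline a\,k_{\zeta_0,\ell})=0$. This is more algebraic and avoids the Leibniz/limiting step entirely. Your final computation is muddled, however; here is the clean version. On $\T$ one has $\overline{k_{\zeta_0,\ell}(z)}=\ell!\,z/(z-\zeta_0)^{\ell+1}$, so
\[
\overline{\,\overline a\,k_{\zeta_0,\ell}\,}=a\,\overline{k_{\zeta_0,\ell}}=\frac{\ell!\,z\,a(z)}{(z-\zeta_0)^{\ell+1}}.
\]
Since $\zeta_0\in(AC)_{\overline a,N}$, Remark~\ref{rem:Taylor}(2) (via Smirnov's theorem) gives $a/(z-\zeta_0)^{N+1}\in H^2$, hence $a/(z-\zeta_0)^{\ell+1}\in H^2$ for every $0\le\ell\le N$, and the right-hand side lies in $zH^2$. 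Thus $\overline a\,k_{\zeta_0,\ell}\in\overline{zH^2}$ and $P_+(\overline a\,k_{\zeta_0,\ell})=0$, as desired. (Your attempted identity $\overline a\,\overline{k_{\zeta_0,\ell}}=\overline{ak_{\zeta_0,\ell}}\in\overline{H^2}$ conjugates the wrong factor and only recovers the already-known fact that $ak_{\zeta_0,\ell}\in H^2$.)

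\textbf{Comparison.} The paper's argument is analytic: it measures the rate at which derivatives of $a$ vanish at $\zeta_0$ against the growth of derivatives of a generic $H^2$ function. Yours is function-theoretic: the $(AC)$ condition, again via Smirnov, directly places the preimage $ak_{\zeta_0,\ell}$ in the Toeplitz kernel. Your route is shorter and does not need the norm-convergence statement for the kernels; the paper's route stays closer to the reproducing-kernel framework and makes the pointwise mechanism (cancellation in Leibniz's formula) transparent.
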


\begin{proof}
Notice that 
$$\zeta_0\in (AC)_{\overline{a},\ell} \implies \zeta_0\in (AC)_{\overline{a},\ell'}, \quad 0 \leqslant \ell'\leqslant \ell,$$ and so it suffices to prove the result when $\ell=N$. By Theorem~\ref{MainThm2}, we can do this by proving 
$$k_{\zeta_0,N}^{\overline{a}} \perp_{\overline{a}} aH^2.$$
To prove this last fact, set $f=ah$, where $h\in H^2$. By Leibniz's formula, 
$$
\langle f,k_{r\zeta_0,N}^{\overline{a}}\rangle_{\overline{a}}=f^{(N)}(r\zeta_0)=\sum_{0 \leqslant k \leqslant N} \binom{N}{k}a^{(k)}(r\zeta_0)h^{(N-k)}(r\zeta_0).
$$
But according to Remark~\ref{rem:Taylor} we have
$$
|a^{(k)}(r\zeta_0)h^{(N-k)}(r\zeta_0)|=o((1-r)^{N+\frac{1}{2}-k}(1-r)^{k-N-\frac{1}{2}})=o(1).
$$
Thus 
$$\lim_{r\to 1}\langle f,k_{r\zeta_0,N}^{\overline{a}}\rangle_{\overline{a}}=0,$$ and, using Corollary~\ref{AC}, yields
$$
\langle f,k_{\zeta_0,N}^{\overline{a}}\rangle_{\overline{a}}=0.
$$
This proves the result. 
\end{proof}

Using Proposition~\ref{Prop:kernel-boundary-orthogonalcomplement}, we can revisit Example \ref{866wywhwnnw} and give an alternate description of the orthogonal complement of $\MM(a)$ in $\MM(\overline{a})$ when 
$$a =\prod_{1 \leqslant j \leqslant n} (z-\zeta_j)^{m_j}.$$ Indeed, since $a$ is a polynomial, it is clear that $\zeta_j\in (AC)_{\overline{a},m_j-1}$, and so $$k_{\zeta_j,\ell}^{\overline{a}}\in T_{\overline{a}}(\gamma K_I)=\mathcal P_{N-1}, \quad 1\leqslant j\leqslant n, 0\leqslant \ell\leqslant m_j-1.$$ Since the functions
$$\{k^{\overline{a}}_{\zeta_j,\ell}: j=1,\cdots, n, \ell=0,\cdots,m_i-1\}$$ are linearly independent, we obtain
\[
 \mathcal{P}_{N-1}=\bigvee\{k^{\overline{a}}_{\zeta_j,\ell}:j=1,\cdots, n,\ell=0,\cdots,m_i=1\}.
\]

\begin{Corollary}\label{kernelspan}
If $a =\prod_{j=1}^n(z-\zeta_j)^{m_j}$, then
\[
 \MM(\overline{a})=\MM(a)\oplus_{\overline{a}}
 \bigvee\{k^{\overline{a}}_{\zeta_j,\ell}:j=1,\cdots, n,\ell=0,\cdots,m_i=1\}.
\]
\end{Corollary}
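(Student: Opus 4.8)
The plan is to assemble the statement from three ingredients already developed in the excerpt: the abstract decomposition from Example \ref{866wywhwnnw}, the membership result Proposition \ref{Prop:kernel-boundary-orthogonalcomplement}, and a dimension count. Recall that in Example \ref{866wywhwnnw} it was shown that for $a = \prod_{j=1}^n (z-\zeta_j)^{m_j}$ we have $a/\overline a = cz^N$ with $N = \sum_j m_j$, hence $T_{a/\overline a}$ has closed range, $\ker T_{\overline a/a} = K_{z^N} = \mathcal P_{N-1}$, and $T_{\overline a}\mathcal P_{N-1} = \mathcal P_{N-1}$ (the last equality because $T_{\overline a}$ is injective and maps the finite-dimensional $\mathcal P_{N-1}$ into itself). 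Thus the decomposition $\MM(\overline a) = \MM(a) \oplus_{\overline a} \mathcal P_{N-1}$ holds, and it remains only to identify $\mathcal P_{N-1}$ as the closed span of the kernels $k^{\overline a}_{\zeta_j,\ell}$, $1 \leqslant j \leqslant n$, $0 \leqslant \ell \leqslant m_j - 1$.

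First I would verify that each relevant kernel lies in $\mathcal P_{N-1}$. Since $a$ is a polynomial, the integral in \eqref{ACa} at $\zeta_0 = \zeta_j$ converges precisely when $2(m_j) - (2N'+2) > -1$, i.e.\ for $N' \leqslant m_j - 1$; concretely $\zeta_j \in (AC)_{\overline a, m_j - 1}$. Proposition \ref{Prop:kernel-boundary-orthogonalcomplement} then gives $k^{\overline a}_{\zeta_j,\ell} \in T_{\overline a}(\gamma K_I) = \mathcal P_{N-1}$ for $0 \leqslant \ell \leqslant m_j - 1$. So the span of these $N$ kernels is contained in the $N$-dimensional space $\mathcal P_{N-1}$, and the proof reduces to showing these $N$ kernels are linearly independent.

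The linear independence is the one genuinely substantive point. One clean way: by Corollary \ref{AC}, $k^{\overline a}_{\zeta_j,\ell} = T_{\overline a}(a k_{\zeta_j,\ell})$ where $a k_{\zeta_j,\ell} = \ell!\, z^\ell a /(1-\overline{\zeta_j}z)^{\ell+1}$, and $T_{\overline a}$ is injective, so it suffices to show the functions $z^\ell a/(1-\overline{\zeta_j}z)^{\ell+1}$, over the stated ranges of $j$ and $\ell$, are linearly independent in $H^2$. Writing $a = \prod_j(z-\zeta_j)^{m_j}$ and noting $(z-\zeta_j)^{m_j}/(1-\overline{\zeta_j}z)^{\ell+1}$ is, up to a unimodular constant and a power of $z$, $(1-\overline{\zeta_j}z)^{m_j - \ell - 1}$ times the remaining factors, one sees that for fixed $j$ these are (modulo the common analytic, nonvanishing-at-$\zeta_k$, $k\neq j$ factor) a triangular system in the local variable; across distinct $j$ the only possible pole among $\{\zeta_1,\dots,\zeta_n\}$ of $z^\ell a/(1-\overline{\zeta_j}z)^{\ell+1}$ is at $\zeta_j$ itself, and its order there is exactly $\ell + 1 - m_j \leqslant 0$ — wait, more carefully: these are in fact polynomials, so I would instead argue by looking at behavior near each $\zeta_j$ after dividing through, or simply invoke that a nontrivial linear relation, restricted near $\zeta_j$, forces the coefficients with index $j$ to vanish by matching orders of vanishing of $a$ against the distinct pole orders $\ell+1$. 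Alternatively — and this is probably cleanest for the write-up — one argues dually: if $\sum c_{j,\ell}\, k^{\overline a}_{\zeta_j,\ell} = 0$ in $\MM(\overline a)$, then pairing with any $f \in \MM(\overline a)$ gives $\sum c_{j,\ell}\, f^{(\ell)}(\zeta_j) = 0$; it then suffices to exhibit, for each target index $(j_0,\ell_0)$, a function $f \in \MM(\overline a)$ with $f^{(\ell)}(\zeta_j) = \delta_{(j,\ell),(j_0,\ell_0)}$, which one can build explicitly as a polynomial times a high power of the $(z - \zeta_j)$'s to kill the other boundary Taylor data while keeping $f \in \MM(\overline a) \supset \mathcal P$ — since polynomials lie in $\MM(\overline a)$. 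I expect this separation-of-boundary-jets construction, or equivalently the triangularity bookkeeping, to be the main (though routine) obstacle; everything else is immediate from the cited results. Finally, having shown the $N$ independent kernels span $\mathcal P_{N-1}$, substitute into the decomposition from Example \ref{866wywhwnnw} to conclude
\[
\MM(\overline{a}) = \MM(a) \oplus_{\overline a} \bigvee\{k^{\overline a}_{\zeta_j,\ell} : j = 1,\dots,n,\ \ell = 0,\dots,m_j - 1\},
\]
which is the assertion of the corollary.
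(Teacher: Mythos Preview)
Your proposal is correct and follows essentially the same route as the paper: invoke Example~\ref{866wywhwnnw} for the decomposition $\MM(\overline a)=\MM(a)\oplus_{\overline a}\mathcal P_{N-1}$, check $\zeta_j\in (AC)_{\overline a,m_j-1}$, apply Proposition~\ref{Prop:kernel-boundary-orthogonalcomplement} to place the $N$ kernels in $\mathcal P_{N-1}$, and finish by linear independence plus a dimension count. The paper simply asserts the linear independence without argument, so your dual Hermite-interpolation justification (polynomials lie in $\MM(\overline a)$ via $\MM(a)+\mathcal P_{N-1}$, and one can prescribe the boundary jets) actually supplies more detail than the original.
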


The techniques above also give the following which generalizes a result from \cite{FHR, LanNow}.

\begin{Theorem}\label{Thm:Hb-decomposition-orthogonale}
Let $I$ be any inner function vanishing at 0, set $a=(1-I)/2$ and $b=(1+I)/2$.  Then
\[
\HH(b)=\MM(a)\oplus_b K_I.
\]
\end{Theorem}

\begin{proof}
From \cite{Sa} we know that $(a,b)$ forms a corona pair (see \eqref{CP}), whence $\HH(b)=\MM(\overline a)$. It follows from our first example of this section that 
we can decompose $\HH(b)$ as the direct sum of $\MM(a)$ and $K_I$
with respect to $\langle \cdot,\cdot\rangle_{\overline{a}}$. 
It remains to prove that $\MM(a)$ and $K_I$ are orthogonal in the inner product of $\HH(b)$. In other words, we need to check that given any function $f\in H^2$ and any function $g\in K_I$, we have 
\[
\langle af,g\rangle_b=0. 
\]
Using again that $\ker T_{\overline{I}}=K_I$ so that
$g=2T_{\overline a}g$, and using a well-known formula for the inner product in $\HH(b)$ \cite{Sa}, we have
\[
\langle af,g/2\rangle_b=\langle af,T_{\overline a}g\rangle_{H^2}+\langle T_{\overline b}(af),T_{\overline b}T_{\overline a}g\rangle_{\HH(\bar b)}.
\]
Note that 
\[
T_{\overline b}(af)=T_{\overline a} T_{a/\overline a}(\overline b f)\quad\mbox{and}\quad T_{\overline b}T_{\overline a}g=T_{\overline a}T_{\overline b}g.
\]
Since $\HH(\overline b)$ and $\MM(\overline a)$ coincide as Hilbert spaces, we deduce that 
\[
\langle af,g/2\rangle_b=\langle af,T_{\overline a}g\rangle_{H^2}+\langle T_{a/\overline a}(\overline b f),T_{\overline b}g\rangle_{H^2}.
\] 
Note that $T_{\overline{a}}g=\frac{1}{2}g=T_{\overline{b}}g$ and $a+b=1$. Hence
\begin{align*}
\langle af,g\rangle_b
 &=\langle af,g\rangle_{H^2}+\langle T_{a/\overline a}(\overline b f),g\rangle_{H^2}\\
  &=\langle af,g\rangle_{H^2}+\langle af,\frac{b}{a}g\rangle_{H^2}\\
 &=\langle af,g+\frac{b}{a}g\rangle_{H^2}\\
 &=\langle af,\frac{1}{a}g\rangle_{H^2}\\
 & =\langle T_{a/\overline{a}}f,g\rangle_{H^2}.
\end{align*}
Recall that $T_{a/\overline{a}} H^2$ is a closed subspace with
$$T_{a/\overline{a}} H^2 =(\ker T_{\overline{a}/a})^{\perp}=K_I^{\perp}=IH^2$$
(see also Example \ref{ppappap}) which proves the claim.
\end{proof}

\bibliographystyle{plain}

\bibliography{references}

\def\cprime{$'$}
\begin{thebibliography}{10}

\bibitem{AC70}
P.R. Ahern and D.N. Clark.
\newblock Radial limits and invariant subspaces.
\newblock {\em Amer. J. Math.}, 92:332--342, 1970.

\bibitem{Bolotnikov-Kheifets}
V.~Bolotnikov and A.~Kheifets.
\newblock A higher order analogue of the {C}arath\'eodory-{J}ulia theorem.
\newblock {\em J. Funct. Anal.}, 237(1):350--371, 2006.

\bibitem{BS}
A.~B\"ottcher and Silbermann B.
\newblock {\em Analysis of Toeplitz operators}.
\newblock Springer Monographs in Mathematics. Springer-Verlag, Berlin, second
  sdition, 2006.

\bibitem{MR0244795}
L.~de~Branges and J.~Rovnyak.
\newblock Canonical models in quantum scattering theory.
\newblock In {\em Perturbation {T}heory and its {A}pplications in {Q}uantum
  {M}echanics ({P}roc. {A}dv. {S}em. {M}ath. {R}es. {C}enter, {U}.{S}. {A}rmy,
  {T}heoret. {C}hem. {I}nst., {U}niv. of {W}isconsin, {M}adison, {W}is.,
  1965)}, pages 295--392. Wiley, New York, 1966.

\bibitem{MR0215065}
L.~de~Branges and J.~Rovnyak.
\newblock {\em Square summable power series}.
\newblock Holt, Rinehart and Winston, New York-Toronto, Ont.-London, 1966.

\bibitem{DSS}
R.G. Douglas, H.S. Shapiro, and A.L. Shields.
\newblock Cyclic vectors and invariant subspaces for the backward shift
  operator.
\newblock {\em Ann. Inst. Fourier (Grenoble)}, 20(fasc. 1):37--76, 1970.

\bibitem{Duren}
P.L. Duren.
\newblock {\em Theory of ${H}\sp{p}$ spaces}.
\newblock Academic Press, New York, 1970.

\bibitem{EEK}
O.~El-Fallah, Y.~Elmadani, and K.~Kellay.
\newblock Kernel estimate and capacity in {D}irichlet type spaces.
\newblock {\em preprint}.

\bibitem{KEFR}
O.~El-Fallah, K.~Kellay, J.~Mashreghi, and T.~Ransford.
\newblock {\em A primer on the {D}irichlet space}, volume 203 of {\em Cambridge
  Tracts in Mathematics}.
\newblock Cambridge University Press, Cambridge, 2014.

\bibitem{FHR}
E.~Fricain, A.~Hartmann, and W.T. Ross.
\newblock Concrete $\mathscr{H}(b)$ spaces.
\newblock {\em preprint}.

\bibitem{MR2390675}
E.~Fricain and J.~Mashreghi.
\newblock Boundary behavior of functions in the de {B}ranges-{R}ovnyak spaces.
\newblock {\em Complex Anal. Oper. Theory}, 2(1):87--97, 2008.

\bibitem{FM08}
E.~Fricain and J.~Mashreghi.
\newblock Integral representation of the {$n$}-th derivative in de
  {B}ranges-{R}ovnyak spaces and the norm convergence of its reproducing
  kernel.
\newblock {\em Ann. Inst. Fourier (Grenoble)}, 58(6):2113--2135, 2008.

\bibitem{FM}
E.~Fricain and J.~Mashreghi.
\newblock {\em Theory of $\mathscr{H}(b)$ spaces}.
\newblock Cambridge University Press, Vol. 1 \& 2, 2015.

\bibitem{Garnett}
J.~Garnett.
\newblock {\em Bounded analytic functions}, volume 236 of {\em Graduate Texts
  in Mathematics}.
\newblock Springer, New York, first edition, 2007.

\bibitem{MR3000683}
D.~Guillot.
\newblock Fine boundary behavior and invariant subspaces of harmonically
  weighted {D}irichlet spaces.
\newblock {\em Complex Anal. Oper. Theory}, 6(6):1211--1230, 2012.

\bibitem{HR12}
A.~Hartmann and W.T. Ross.
\newblock Boundary values in range spaces of co-analytic truncated {T}oeplitz
  operators.
\newblock {\em Publ. Mat.}, 56(1):191--223, 2012.

\bibitem{HSS}
A.~Hartmann, D.~Sarason, and K.~Seip.
\newblock Surjective {T}oeplitz operators.
\newblock {\em Acta Sci. Math. (Szeged)}, 70(3-4):609--621, 2004.

\bibitem{HS}
A.~Hartmann and K.~Seip.
\newblock Extremal functions as divisors for kernels of toeplitz operators.
\newblock {\em J. Funct. Anal.}, 202(2):342--362, 2003.

\bibitem{Ha86}
E.~Hayashi.
\newblock The kernel of a {T}oeplitz operator.
\newblock {\em Integral Equations Operator Theory}, 9(4):588--591, 1986.

\bibitem{Ha90}
E.~Hayashi.
\newblock Classification of nearly invariant subspaces of the backward shift.
\newblock {\em Proc. Amer. Math. Soc.}, 110(2):441--448, 1990.

\bibitem{Hi}
D.~Hitt.
\newblock Invariant subspaces of {$H^2$} of an annulus.
\newblock {\em Pacific J. Math.}, 134(1):101--120, 1988.

\bibitem{LanNow}
B.~Lanucha and M.~Nowak.
\newblock De {B}ranges spaces and generalized {D}irichlet spaces.
\newblock Preprint.

\bibitem{MR1065054}
J.E. McCarthy.
\newblock Common range of co-analytic {T}oeplitz operators.
\newblock {\em J. Amer. Math. Soc.}, 3(4):793--799, 1990.

\bibitem{Nik}
N.K. Nikolski.
\newblock {\em Treatise on the shift operator}.
\newblock Springer-Verlag, Berlin, 1986.

\bibitem{Nik1}
N.K. Nikolski.
\newblock {\em Operators, functions, and systems: an easy reading. {V}ol. 1},
  volume~92 of {\em Mathematical Surveys and Monographs}.
\newblock American Mathematical Society, Providence, RI, 2002.
\newblock Hardy, Hankel, and Toeplitz, Translated from the French by Andreas
  Hartmann.

\bibitem{Nik2}
N.K. Nikolski.
\newblock {\em Operators, functions, and systems: an easy reading. {V}ol. 2},
  volume~93 of {\em Mathematical Surveys and Monographs}.
\newblock American Mathematical Society, Providence, RI, 2002.
\newblock Model operators and systems, Translated from the French by Andreas
  Hartmann and revised by the author.

\bibitem{Paulsen}
V.~Paulsen.
\newblock {\em An Introduction to the theory of reproducing kernel {H}ilbert
  spaces}.
\newblock www.math.uh.edu/{~}vern/rkhs.pdf, 2009.

\bibitem{Ri}
S.~Richter.
\newblock A representation theorem for cyclic analytic two-isometries.
\newblock {\em Trans. Amer. Math. Soc.}, 328(1):325--349, 1991.

\bibitem{MR1300218}
D.~Sarason.
\newblock Kernels of {T}oeplitz operators.
\newblock In {\em Toeplitz operators and related topics ({S}anta {C}ruz, {CA},
  1992)}, volume~71 of {\em Oper. Theory Adv. Appl.}, pages 153--164.
  Birkh\"auser, Basel, 1994.

\bibitem{Sa}
D.~Sarason.
\newblock {\em Sub-{H}ardy {H}ilbert spaces in the unit disk}.
\newblock University of Arkansas Lecture Notes in the Mathematical Sciences,
  10. John Wiley \& Sons Inc., New York, 1994.
\newblock A Wiley-Interscience Publication.

\end{thebibliography}

\end{document}